\newtheorem{theorem}{Theorem}[section]
\newtheorem{corollary}[theorem]{Corollary}
\newtheorem{definition}[theorem]{Definition}
\newtheorem{lemma}[theorem]{Lemma}
\newtheorem{remark}[theorem]{Remark}
\numberwithin{equation}{section}
\begin{document}
\title{\textbf{A finite theorem for Ahlfors' covering surface theory}}
\author{Tian~Run Lin \& Yun~Ling Chen \& Guang~Yuan Zhang}
\address{Department of Mathematical Sciences, Tsinghua University, Beijing
100084, P. R. China.}
\address{\slshape Email: ltr17@mails.tsinghua.edu.cn}
\address{\slshape Email: chenyl20@mails.tsinghua.edu.cn}
\address{\slshape Email: gyzhang@mail.tsinghua.edu.cn}
\thanks{Project 10971112 and 12171264 supported by NSFC}

\begin{abstract}
Ahlfors' theory of covering surfaces is one of the major mathematical
achievement of last century. The most important part of his theory is the
Second Fundamental Theorem (SFT). We are interested in the relation of
errors of Ahlfors' SFT with the same boundary curve.

In this paper we will prove a result which is used to establish the best
bound of the constant in Ahlfors' SFT (in \cite{Zh}).

Precisely speaking, we will prove that for any surface $\Sigma\in\mathcal{F}%
_r(L,m)$, a new surface $\Sigma_1$ can be constructed based on it, such that
$R(\Sigma_1)\ge R(\Sigma)$ and $L(\partial\Sigma_1)\le L(\partial\Sigma)$,
where $R(\Sigma)$ is Ahlfors' error term and $L(\partial\Sigma)$ is the
boundary length of the surface $\Sigma$, and the covering degree of $%
\Sigma_1 $ has an upper bound independent of surfaces. Meanwhile, this
conclusion suggests that the supremum of $H(\Sigma)=R(\Sigma)/L(\partial%
\Sigma)$ can be achieved by surfaces in the space $\mathcal{F}_r^{\prime
}(L,m)$.
\end{abstract}

\subjclass[2020]{30D35, 30D45, 52B60}
\maketitle
\tableofcontents

\section{Introduction}

\label{sec:intro}

In the study of the value distribution theory, Nevanlinna's theory plays a
major role as we know (\cite{MR0164038,MR3751331,MR1555200,MR1301781}). And
Ahlfors' theory of covering surfaces, which is parallel to the theory of
Nevanlinna, can also give us useful methods to discuss meromorphic functions
(\cite{MR1555403,MR994468,MR0012669,MR1786560,MR0164038}). Ahlfors' theory
builds upon geometrical views and gives the Second Fundamental Theorem
parallel to, but different from Nevanlinna's theory, whose form is similar
to a type of isoperimetric inequality. For this reason Ahlfors' theory leads
to the interesting geometrical constant called Ahlfors constant, which have
been discussed in \cite{MR4125732,MR3004780}.

We need recall some basic definitions and notations. The extended complex
plane $\overline{\mathbb{C}}=\mathbb{C}\cup\{\infty\}$ can be naturally
identified with the unit sphere $S$ in $\mathbb{R}^3$ via the stereographic
projection as introduced in \cite{MR510197}. Following the Euclidean metric
on $\mathbb{R}^3$, the sphere $S$ has the standard metric and the
corresponding K\"{a}hler form. We can represent them with coordinate in $%
\mathbb{C}$:
\begin{align*}
ds^2&=\frac{4}{(1+\lvert z\rvert^2)^2}\lvert dz\rvert^2, & \omega&=\frac{2%
\sqrt{-1}}{\pi}\frac{dz\wedge d\bar{z}}{(1+\lvert z\rvert^2)^2}, & &z\in%
\mathbb{C}.
\end{align*}
Then the spherical length and the spherical area can be defined. We will use
$L$ and $A$ to denote them respectively.

Let $U$ be $\overline{\mathbb{C}}$, or be a domain in $\overline{\mathbb{C}}$
enclosed by a finite number of Jordan curves, and let $f$ be a meromorphic
function defined on $\overline{U}$. Then $f$ can be regard as a holomorphic
mapping from $\overline{U}\subset \overline{\mathbb{C}}$ into $S$, via
stereographic projection. The pair $\Sigma =(f,\overline{U})$ can be
regarded as a holomorphic covering surface spread over the sphere $S$, and
the boundary $\partial \Sigma $ of $\Sigma $ is defined to be the pair $%
\partial \Sigma =(f,\partial U),$ where $\partial U$ is the boundary of $U$.
The length $L(\partial \Sigma )=L(f,\partial U)$ and area $A(\Sigma )=A(f,U)$
can be given by
\begin{align*}
L(f,\partial U)& =\int_{\partial U}f^{\ast }ds=\int_{\partial U}\frac{%
2\lvert f^{\prime }(z)\rvert }{1+\lvert f(z)\rvert ^{2}}\lvert dz\rvert , \\
A(f,U)& =\int_{U}f^{\ast }\omega =\frac{2\sqrt{-1}}{\pi }\int_{U}\frac{%
\lvert f^{\prime }(z)|^{2}}{(1+\lvert f(z)\rvert ^{2})^{2}}dz\wedge d\bar{z},
\end{align*}%
where $f^{\ast }$ means the pull-back by $f$. Note that the length $%
L(f,\partial U)$ may be infinite.

Compared to holomorphic covering surfaces, we define covering surfaces as in
Ahlfors' paper \cite{MR1555403}: a covering surface over $S$ is built up by
a finite number of one sheeted closed topological triangular domains.
Equivalently to say, it can be defined as

\begin{definition}
\label{def:surface} A \emph{covering surface} over $S$ is a pair $(f,%
\overline{U})$, where $U$ is $\overline{\mathbb{C}}$ or a domain in $%
\overline{\mathbb{C}}$ enclosed by a finite number of Jordan curves and $f:%
\overline{U}\to S$ is an orientation-preserving, continuous, open and
finite-to-one mapping (OPCOFOM), which in fact means that $f$ can be
extended to be OPCOFO on a neighborhood of $\overline{U}$. Here
orientation-preserving means that when $S$ is identified with $\overline{%
\mathbb{C}}$ via stereograpchic projection, $f$ is orientation-preserving on
$\mathbb{C}\cap(\overline{U}\setminus f^{-1}(\infty))$.

We denote by $\mathbf{F}$ the set of all covering surfaces such that for
each $(f,\overline{U})\in\mathbf{F}$, $U$ is a Jordan domain. Then we denote
by $\mathbf{F}(L)$ the subspace of $\mathbf{F}$ such that for each $\Sigma\in%
\mathbf{F}(L)$, $L(\partial\Sigma)\le L$.
\end{definition}

By Stoilow's theorem, a covering surface can be regarded as a holomorphic
covering surface, up to an orientation-preserving homeomorphic (OPH)
transform of the domain of definition. That is,

\begin{theorem}[Sto\"{\i}low, see \protect\cite{MR0082545} pp.120--121]
\label{thr:stoilow} Let $U$ be a domain on $\overline{\mathbb{C}}$ and $%
f:U\to S$ be an OPCOFOM. Then there exists a domain $U_1$ in $\overline{%
\mathbb{C}}$ and an OPH $\varphi:U_1\to U$, such that $f_1=f\circ\varphi:U_1%
\to S$ is a non-constant holomorphic mapping on $U_1$.
\end{theorem}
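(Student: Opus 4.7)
The plan is to pull back the standard complex structure from $S$ via $f$ in order to endow $U$ with a Riemann surface structure for which $f$ becomes holomorphic, and then to uniformize this Riemann surface to obtain the desired change of coordinates $\varphi$. The hypothesis that $f$ is an OPCOFOM is exactly what is needed to make this pullback well-defined, both at regular points and at the (necessarily isolated) branch points.

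First I would establish the \emph{topological local normal form}: at each $p\in U$ there exist a neighborhood $V$ of $p$, a neighborhood $W$ of $f(p)$, an integer $n=n(p)\ge 1$, and orientation-preserving homeomorphisms $\alpha\colon V\to\mathbb{D}$, $\beta\colon W\to\mathbb{D}$ with $\alpha(p)=\beta(f(p))=0$, such that $\beta\circ f\circ\alpha^{-1}(z)=z^{n}$. This is the heart of the argument and uses all four of the hypotheses (continuous, open, orientation-preserving, finite-to-one) in an essential way; the standard route is to choose a small Jordan curve $\gamma$ around $f(p)$ whose preimage components are Jordan curves around $p$ (using openness and finite-to-oneness), count how many times $f$ wraps each component around $\gamma$ (using orientation-preservation), and then construct the straightening homeomorphism. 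A consequence is that the branch locus $B=\{p\in U:n(p)\ge 2\}$ is discrete.

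Second, on $U\setminus B$ the map $f$ is a local orientation-preserving homeomorphism, so pulling back the conformal atlas of $S$ produces a complex structure there. At each $p\in B$, I would take the coordinate $\alpha$ from the normal form as a holomorphic chart around $p$. Compatibility with the surrounding charts reduces to the fact that an orientation-preserving homeomorphism of a disk which conjugates $z^{n}$ to itself on a punctured neighborhood of the origin must be holomorphic: extracting $n$-th roots it becomes a bounded holomorphic function off a point and hence holomorphic by Riemann's removable singularity theorem. This gives a Riemann surface structure $\mathcal{R}$ on $U$ such that $f\colon\mathcal{R}\to S$ is holomorphic and nonconstant.

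Finally, since $U$ is a domain in $\overline{\mathbb{C}}$, the Riemann surface $\mathcal{R}$ is planar and homeomorphic to $U$, hence of the same topological type; by Koebe's uniformization of planar Riemann surfaces (or by the Riemann mapping theorem together with a reflection argument for multiply connected cases) there is a biholomorphism $\psi\colon\mathcal{R}\to U_{1}$ onto a domain $U_{1}\subset\overline{\mathbb{C}}$ equipped with its standard structure. Setting $\varphi=\psi^{-1}$, viewed as a map between subsets of $\overline{\mathbb{C}}$, gives the required orientation-preserving homeomorphism, and $f_{1}=f\circ\varphi\colon U_{1}\to S$ is holomorphic and nonconstant by construction. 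The main obstacle is the first step: the topological normal form, since once it is in hand the holomorphicity of the transition maps and the invocation of uniformization are comparatively routine.
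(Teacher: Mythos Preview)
The paper does not give its own proof of this theorem: it is quoted as a classical result with a reference to Sto\"{\i}low's monograph (pp.~120--121) and then used as a black box throughout. So there is no ``paper's proof'' to compare against.

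That said, your outline is the standard route to Sto\"{\i}low's factorization and is correct in substance. The three steps---topological local normal form $z\mapsto z^{n}$ for an OPCOFOM, pullback of the conformal structure of $S$ to make $f$ holomorphic, and then Koebe uniformization of the resulting planar Riemann surface---are exactly how the theorem is established in the classical literature. Your identification of the first step as the crux is accurate: once the local model is known, the branch set is discrete, the transition maps at branch points are handled by the removable-singularity argument you describe, and the rest is bookkeeping. One small point: in the compatibility check at a branch point you should be slightly more careful about what ``conjugates $z^{n}$ to itself'' means---the relevant statement is that a continuous map $h$ of a punctured disk with $h(z)^{n}$ holomorphic and $h$ orientation-preserving is itself holomorphic after a suitable choice of branch, which is what your $n$-th root extraction really uses. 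Also, for the final step, invoking Koebe's planar uniformization is the clean way; the parenthetical ``reflection argument for multiply connected cases'' is vague and not really a substitute, so I would drop it.
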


By the compactness of $\overline{\mathbb{C}}$, if $U=\overline{\mathbb{C}}$
we must have $U_1=\overline{\mathbb{C}}$ as well.

Now we can state Ahlfors' famous theorem published in \cite{MR1555403} in
1935, parallel to Nevanlinna's Second Fundamental Theorem.

\begin{theorem}[Ahlfors' Second Fundamental Theorem]
Let $q\ge3$, and $E_q=\{a_1,a_2,\dots,a_q\}$ be a set consisting of distinct
$q$ points on $S$. Then there exists a constant $h>0$ that depends only on $%
E_q$, such that
\begin{equation*}
(q-2)A(\Sigma)\le 4\pi\sum_{j=1}^{q}\overline{n}(\Sigma,a_j)+hL(\partial%
\Sigma),
\end{equation*}
for any $\Sigma=(f,\overline{U})\in\mathbf{F}$, where $\overline{n}%
(\Sigma,a_j)$ is the cardinality of $f^{-1}(a_j)\cap U$ (ignoring
multiplicity).
\end{theorem}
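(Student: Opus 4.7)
The plan is to follow Ahlfors' classical topological approach: combine an islands-and-peninsulas decomposition of $f^{-1}$ over small disks around the target points with an isoperimetric (``first main theorem'') estimate on the mean sheet number.

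First I would use Theorem~\ref{thr:stoilow} to reduce to the case that $f$ is holomorphic, since all of $A(\Sigma)$, $L(\partial\Sigma)$, and $\overline{n}(\Sigma,a_j)$ are invariant under OPH reparametrization of the domain; after a small perturbation of the centers $a_j$ (which only alters the eventual constant $h$) I may further assume no $a_j$ is a critical value of $f$. Choose pairwise disjoint spherical disks $D_j$ of a common radius $\delta=\delta(E_q)>0$ centered at the $a_j$. Classify each component of $f^{-1}(D_j)$ as a \emph{simple island} (mapping homeomorphically onto $D_j$, thus counted exactly once by $\overline{n}(\Sigma,a_j)$), a \emph{multiple island} (relatively compact in $U$ but of degree $\ge 2$), or a \emph{peninsula} (whose closure meets $\partial U$). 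Only simple islands contribute to the right-hand counting term, and I must bound the defect contributed by the other two types.

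The main inequality then comes from combining two ingredients. The first is Ahlfors' isoperimetric estimate: for any fixed Jordan region $D\subset S$ there is a constant $h_D$, depending only on $D$, such that the mean sheet number $S(\Sigma,D)$ satisfies
\[\bigl|\,4\pi S(\Sigma,D)-A(\Sigma)\,\mathrm{Area}(D)\,\bigr|\le h_D L(\partial\Sigma).\]
The second is a Riemann--Hurwitz-style topological count: the complement $F_0=\overline{U}\setminus\bigcup_j f^{-1}(D_j)$ maps under $f$ as an (essentially unbranched, after the perturbation) covering of $S\setminus\bigcup_j D_j$, a surface of Euler characteristic $2-q$. Multiplicativity of $\chi$ along the covering, combined with the observation that each peninsula carries at least the spherical circumference of $D_j$ worth of $\partial U$ so that the number of peninsulas is $O(L(\partial\Sigma))$, produces the factor $(q-2)A(\Sigma)/(4\pi)$ on the left and a boundary-length error on the right.

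The principal obstacle I anticipate is ensuring the final constant $h$ depends only on the configuration $E_q$ and not on $\Sigma$. This forces uniform control of (i) the Ahlfors isoperimetric constants $h_{D_j}$, which is available once $\delta=\delta(E_q)$ is fixed; and (ii) the topological correction, for which the key estimate is that the number of peninsulas and of components of $f^{-1}(D_j)$ meeting $\partial U$ is bounded by a constant multiple of $L(\partial\Sigma)/\sin\delta$. Optimising $\delta$ as a function of the mutual spherical distances in $E_q$ then packages every boundary-length error into a single constant $h=h(E_q)$, yielding the stated inequality.
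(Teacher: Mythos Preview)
The paper does not give its own proof of this statement: Ahlfors' Second Fundamental Theorem is quoted from \cite{MR1555403} as background, and the paper's actual work begins afterwards (the goal is Theorem~\ref{thr:main}). So there is nothing in the paper to compare your argument against; your outline is a recognisable sketch of Ahlfors' original islands--peninsulas proof, which is precisely what the paper is citing.

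That said, one step in your sketch is a genuine gap. You write that ``after a small perturbation of the centers $a_j$ \dots\ I may further assume no $a_j$ is a critical value of $f$,'' and later that the covering over $S\setminus\bigcup_j D_j$ is ``essentially unbranched, after the perturbation.'' But any such perturbation must depend on the particular map $f$, so the constant you end up with depends on $\Sigma$, not only on $E_q$; and even after moving the $a_j$ off the critical values, branch values of $f$ can still lie in the complement $S\setminus\bigcup_j D_j$, so that covering need not be unbranched. Neither manoeuvre is needed in Ahlfors' argument: one keeps $E_q$ fixed, allows multiple islands and allows branching over the complement, and observes that both phenomena only \emph{help} the inequality (each extra ramification point lowers the Euler characteristic of the relevant piece, which pushes the Riemann--Hurwitz count in the favourable direction). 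A smaller point: your sentence ``only simple islands contribute to the right-hand counting term'' is not literally true, since $\overline{n}(\Sigma,a_j)$ counts every preimage of $a_j$ in $U$, including those in multiple islands or peninsulas; what one actually uses is the inequality $(\text{number of simple islands over }D_j)\le\overline{n}(\Sigma,a_j)$. Drop the perturbation, run Riemann--Hurwitz as an inequality rather than an equality, and the remainder of your plan goes through with $h$ depending only on $\delta(E_q)$.
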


We will assume $E_q=\{a_1,a_2,\dots,a_q\}$ be the set of arbitrarily given
distinct $q$ points, $q\ge3$, and for any surface $\Sigma=(f,\overline{U})$,
write
\begin{equation*}
\overline{n}(\Sigma,E_q)=\sum_{j=1}^q\overline{n}(\Sigma,a_j).
\end{equation*}
We call $E_q$ the \emph{special set} and the points in $E_q$ the \emph{%
special points}.

The inequality in Ahlfors' Second Fundamental Theorem, which makes a
relation between geometric quantity and the counting function $\overline{n}%
(\Sigma,a_j)$, works effectively in research of the value distribution
theory of meromorphic functions obviously. For example \cite{MR0164038}, if $%
q=3$ and $f$ is a meromorphic function on $\mathbb{C}$ that omits $a_1$, $%
a_2 $, $a_3$, then this inequality tells us $A(R)\le hL(R)$, where $%
A(R)=A(f,\Delta_{R})$, $L(R)=L(f,\partial\Delta_{R})$ and $\Delta_{R}=\{z\in%
\mathbb{C}:\lvert z\rvert<R\}$. On the other hand, by the expression of $L$
and $A$ we can easily deduce that $L^2(R)\le2\pi^2RA^{\prime }(R)$. Then $%
A^2(R)\le2\pi^2h^2RA^{\prime }(R)$, which is a type of Gronwall inequality
and finally gives $A(R)=0$. It proves Picard's theorem, which asserts that
any meromorphic function on $\mathbb{C}$ that omits three distinct values
must be constant.

As we see, the existence of constant $h$ in Ahlfors' Second Fundamental
Theorem has played a big role in the value distribution theory. However, the
minimal possible value of this constant has not been sufficiently studied up
to now. This minimal value is called \emph{Ahlfors constant} in \cite%
{MR4125732}.

This problem can be traced back to the early 1940s, when J. Dufresnoy first
gave a numerical estimate of $h$ in \cite{MR0012669} as follows.

\begin{theorem}[Dufresnoy \protect\cite{MR0012669}]
For any $\Sigma=(f,\overline{U})\in\mathbf{F}$,
\begin{equation*}
(q-2)A(\Sigma)\le4\pi\overline{n}(\Sigma,E_q)+(q-2)\frac{6\pi}{\delta_{E_q}}%
L(\partial\Sigma),
\end{equation*}
where $\delta_{E_q}=\min_{1\le i<j\le q}d(a_i,a_j)$.
\end{theorem}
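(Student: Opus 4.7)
The plan is to prove Dufresnoy's estimate by the classical length--area method, applied to small spherical disks around each special point, combined with Ahlfors' topological bookkeeping for islands and tongues.

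First I would choose, for each special point $a_j$, the open spherical disk $\Delta_j$ centered at $a_j$ of spherical radius $\rho=\delta_{E_q}/2$. By definition of $\delta_{E_q}$ any two distinct $a_i,a_j$ are at spherical distance exceeding $\delta_{E_q}$, so the disks $\Delta_1,\dots,\Delta_q$ are pairwise disjoint. By Stoilow's theorem (Theorem~\ref{thr:stoilow}) we may after a change of parameter assume $f$ is holomorphic, so $f^{-1}(\Delta_j)\cap U$ is a finite disjoint union of Jordan domains, each of which is either an \emph{island} (relatively compact in $U$, on which $f$ is a proper branched covering of $\Delta_j$ of some degree $m\ge1$) or a \emph{tongue} (meeting $\partial U$).

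Next I would estimate the two ways area accumulates. On each island of degree $m$ the area contributed to $A(\Sigma)$ is exactly $m\cdot A(\Delta_j)$, while the island contributes exactly one unit to $\overline{n}(\Sigma,a_j)$. Hence
\[A(\Sigma|_{f^{-1}(\Delta_j)\cap U})=A(\Delta_j)\,\overline{n}(\Sigma,a_j)+A(\Delta_j)\cdot(\text{excess multiplicity})+A(\text{tongues}).\]
The spherical isoperimetric inequality applied inside the disk $\Delta_j$ bounds the excess multiplicity and the tongue area in terms of the length of $\partial\Sigma$ meeting $\overline{\Delta_j}$, with a constant of order $1/\rho=2/\delta_{E_q}$ (this is where the geodesic radius of the small disk controls the isoperimetric ratio). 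For the complementary region $S\setminus\bigcup_j\Delta_j$, I would use the same length--area estimate: every point here is at distance at least $\rho$ from every special point, and a similar isoperimetric bound shows that the area of $\Sigma$ over this exterior region is likewise bounded by a constant multiple of $L(\partial\Sigma)/\rho$.

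Finally I would sum the estimates and invoke the topological input that accounts for the factor $q-2$: on $S$ with $q$ marked points the Riemann--Hurwitz / Euler characteristic argument (which is the essence of the factor $q-2$ in Ahlfors' SFT) converts the raw sum $\sum_j A(\Sigma|_{f^{-1}(\Delta_j)})$ into $(q-2)A(\Sigma)$ after subtracting the unbranched part. Combining with the $4\pi\overline{n}(\Sigma,E_q)$ coming from the simple island count and putting $\rho=\delta_{E_q}/2$ into the isoperimetric constants yields the claimed coefficient $6\pi/\delta_{E_q}$.

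The main obstacle is the bookkeeping needed to extract the \emph{explicit} constant $6\pi/\delta_{E_q}$ rather than an abstract $O(1/\delta_{E_q})$ bound. The isoperimetric inequality on a spherical disk has a radius-dependent constant, the excess-multiplicity bound requires carefully chosen test arcs inside each $\Delta_j$, and the separate estimates on islands, tongues and the exterior must all be calibrated to the same $\rho$. Each individual step is a routine length--area computation, but the constants have to be collected precisely in order to arrive at the stated $6\pi/\delta_{E_q}$.
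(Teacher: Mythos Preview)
The paper does not prove this theorem. Dufresnoy's estimate is stated in Section~\ref{sec:intro} purely as historical background, attributed to \cite{MR0012669}, and no argument for it is given anywhere in the paper. So there is no ``paper's own proof'' to compare your proposal against.

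As for your sketch on its own merits: the overall architecture (disjoint spherical disks of radius $\delta_{E_q}/2$, island/tongue decomposition via Sto\"{\i}low, length--area bounds) is indeed the classical route to Ahlfors-type inequalities with an explicit constant. Two places deserve more care. First, an island of degree $m$ over $\Delta_j$ contributes at least one, not exactly one, point to $\overline{n}(\Sigma,a_j)$; your displayed identity should be an inequality, and the ``excess multiplicity'' term then has to be controlled separately (this is where the Riemann--Hurwitz count actually enters). Second, the step you label ``topological input that accounts for the factor $q-2$'' is doing essentially all of the work: converting island counts and covering numbers over the exterior region into the global quantity $(q-2)A(\Sigma)$ is precisely Ahlfors' main covering theorem, not a bookkeeping afterthought. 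Without spelling that out you do not yet have a proof, and the explicit constant $6\pi/\delta_{E_q}$ will come only from tracking Dufresnoy's specific choices through that argument, not from the isoperimetric pieces alone.
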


In 2009, the precise bound for $h$ have been identified in a special case as
follows.

\begin{theorem}[Zhang \protect\cite{MR3004780}]
For any $\Sigma\in\mathbf{F}$ with $\overline{n}(\Sigma,\{0,1,\infty\})=0$,
\begin{equation*}
A(\Sigma)\le h_0L(\partial\Sigma),
\end{equation*}
where
\begin{equation*}
h_0=\max_{\theta\in\lbrack0,\pi/2]}\left(\frac{(\pi+\theta)\sqrt{%
1+\sin^2\theta}}{\arctan\frac{\sqrt{1+\sin^2\theta}}{\cos\theta}}%
-\sin\theta\right),
\end{equation*}
and there exists a sequence $\{\Sigma_{n}\}=\{(f_n,\overline{\Delta})\}$ in $%
\mathbf{F}$ with $\overline{n}(\Sigma_n,\{0,1,\infty\})=0$ such that $%
A(\Sigma_{n})/L(\partial\Sigma_{n})\to h_0$ as $n\to\infty$.
\end{theorem}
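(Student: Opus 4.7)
My plan is to reduce the problem to a one-parameter optimization over a family of canonical model surfaces. The three omitted values play the central role both in constraining the topology and in pinning down the extremizers.

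By Theorem~\ref{thr:stoilow} we may assume $f$ is holomorphic on $\overline{U}$, and since $\mathbf{F}$ consists of Jordan-domain surfaces, we may further take $U=\Delta$ by precomposing with a Riemann map. The condition $\overline{n}(\Sigma,\{0,1,\infty\})=0$ means $f$ maps $\overline{\Delta}$ into the thrice-punctured sphere $S\setminus\{0,1,\infty\}$. Lifting $f$ through the universal covering $\pi:\mathbb{H}\to S\setminus\{0,1,\infty\}$ produces a holomorphic $\tilde{f}:\overline{\Delta}\to\mathbb{H}$, and if $\rho(w)|dw|$ denotes the pullback of the spherical metric by $\pi$, then $L(\partial\Sigma)$ and $A(\Sigma)$ are expressed as weighted integrals of $\rho(\tilde{f})|\tilde{f}'|$ and $\rho(\tilde{f})^2|\tilde{f}'|^2$. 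Maximizing $A(\Sigma)/L(\partial\Sigma)$ is thereby recast as an isoperimetric-type problem for $\rho$-weighted regions in $\mathbb{H}$.

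The main geometric step is a symmetrization argument in $\mathbb{H}$. Exploiting the monotonicity of $\rho$ near the cusps of the modular covering, I would show that for any admissible configuration one can produce a modification whose $\rho$-area is no smaller and $\rho$-boundary length no larger, and whose image (modulo the deck group) is a geodesic neighborhood of a fixed axis. These symmetric neighborhoods project onto $S$ as branched covers of a spherical annulus whose outer boundary is a round circle lying at spherical distance $\theta\in[0,\pi/2]$ from the nearest omitted value. A direct computation in spherical trigonometry then shows
\[\frac{A(\Sigma)}{L(\partial\Sigma)}=\frac{(\pi+\theta)\sqrt{1+\sin^2\theta}}{\arctan(\sqrt{1+\sin^2\theta}/\cos\theta)}-\sin\theta,\]
and optimizing over $\theta$ yields $h_0$. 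For the extremal sequence, take $f_n$ to be an $n$-sheeted branched cover of the cap realizing the optimum $\theta^*$, with sheets glued so that only one boundary loop survives; then the area grows like $n$ while the perimeter remains bounded, forcing $A(\Sigma_n)/L(\partial\Sigma_n)\to h_0$.

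The main obstacle will be the symmetrization step. The weight $\rho$ is not radially monotone in any global sense, $\tilde{f}$ need not be injective, and the boundary of $\tilde{f}(\overline{\Delta})$ may wind between the three cusps in complicated ways. A coarea argument combined with a level-set rearrangement adapted to the cusp structure should suffice, but tracking how both $\rho$-area and $\rho$-length transform under the rearrangement---especially in the presence of multi-sheeted coverings and the resulting counting with multiplicities---is the real technical heart of the proof.
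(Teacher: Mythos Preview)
This theorem is not proved in the present paper. It appears in the Introduction solely as a cited result from \cite{MR3004780}, serving as historical background and motivation for the study of the Ahlfors constant; the paper's own work begins with Theorems~\ref{thr:branch} and~\ref{thr:main} and is devoted to proving the latter. Consequently there is no proof in this paper against which your proposal can be compared.

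As for the proposal on its own terms: the lift to the universal cover of $S\setminus\{0,1,\infty\}$ is a natural reflex, but you yourself flag the symmetrization step as the ``real technical heart,'' and indeed that is where the argument is not a proof but a wish. The density $\rho$ pulled back from the spherical metric via the modular function is highly non-monotone and has essential singularities at the cusps, so no standard rearrangement lemma applies; you would have to invent one tailored to this specific weight, and nothing in the sketch indicates how. The approach actually taken in \cite{MR3004780} is rather different: it works directly on the sphere, reduces to surfaces whose boundaries are piecewise circular via approximation, and then uses explicit cutting, pasting, and replacement arguments (sewing in spherical caps and lunes) to drive an arbitrary admissible surface toward a one-parameter family of model surfaces built from overlapping hemispheres; the formula for $h_0$ then drops out of elementary spherical trigonometry on those models. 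Your extremal-sequence construction is in the right spirit, but the upper bound $A(\Sigma)\le h_0 L(\partial\Sigma)$ requires the hard direction, and the symmetrization you propose is at present only a name for the difficulty rather than a method for resolving it.
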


\begin{definition}
We will write
\begin{equation*}
R(\Sigma,E_q)=(q-2)A(\Sigma)-4\pi\overline{n}(\Sigma,E_q)
\end{equation*}
and
\begin{equation*}
H(\Sigma,E_q)=\frac{R(\Sigma,E_q)}{L(\partial\Sigma)}.
\end{equation*}

When $E_q$ is fixed, we can write $\overline{n}(\Sigma)=\overline{n}%
(\Sigma,E_q)$, $R(\Sigma)=R(\Sigma,E_q)$, and $H(\Sigma)=H(\Sigma,E_q)$ for
convenience.
\end{definition}

Then, computing the minimal value $h$ in Ahlfors' Second Fundamental Theorem
is equivalent to find the supremum
\begin{equation*}
H_0=\sup\{H(\Sigma):\Sigma\in\mathbf{F}\}.
\end{equation*}

It is proved in \cite{Zh1} that the supremum $H_0$ of $H(\Sigma)$ cannot be
realized by any surface of $\mathbf{F}$. But as pointed in \cite{MR3004780},
to study $H_0$, one has to study the supremum
\begin{equation*}
H_L=\sup\{R(\Sigma)/L(\partial\Sigma):\Sigma\in\mathbf{F}(L)\}
\end{equation*}
for given $L>0,$ and has to find an extremal surface $\Sigma_L$ in $\mathbf{F%
}(L)$ which assume $H_L$: $H(\Sigma_L)=H_L$.

The inequality in Ahlfors' Second Fundamental Theorem can be regarded as a
type of isoperimetric inequality. Let us recall that how could we prove the
most basic isoperimetric inequality on Euclidean plane. Any closed curve can
be approximated by polygons, and for fixed $m$, the space composed of all $m$%
-polygons possesses certain compactness so we can easily show that regular
polygons have the largest area when the perimeter and edge number are fixed.
Then all the work was finished.

To study the existence of $\Sigma_L$, it is natural to consider the ``$m$%
-polygons'' on the Riemann sphere $S$, whose boundary can be divided into $m$
simple circular arcs, as in \cite{Zh}. On the other hand, any Jordan domain $%
U$ is conformally equivalent to $\Delta=\{z\in\mathbb{C}:\lvert z\rvert<1\}$%
, the unit disk centered at the origin. This leads to a corresponding
between surfaces defined on $\overline{U}$ and $\overline{\Delta}$. Of
course it will be convenient if we only discuss surfaces defined on the
closed unit disk $\overline{\Delta}$. Now we give the definition of ``$m$%
-polygon'' as in \cite{Zh}:

\begin{definition}
\label{def:FLM} We denote by $\mathcal{F}(L,m)$ the subset of $\mathbf{F}(L)$%
, which consists of all surfaces $\Sigma=(f,\overline{\Delta})\in\mathbf{F}%
(L)$ satisfying the following conditions:

(1) $\partial \Delta $ and $\partial \Sigma $ have partitions
\begin{equation}
\partial \Delta =\alpha _{1}(\mathfrak{p}_{1},\mathfrak{p}_{2})+\alpha _{2}(%
\mathfrak{p}_{2},\mathfrak{p}_{3})+\dots +\alpha _{m}(\mathfrak{p}_{m},%
\mathfrak{p}_{1})  \label{def:FLM-1}
\end{equation}%
and
\begin{equation}
\partial \Sigma =c_{1}(p_{1},p_{2})+c_{2}(p_{2},p_{3})+\dots
+c_{m}(p_{m},p_{1}),  \label{def:FLM-2}
\end{equation}%
such that for each $\alpha _{j}$, $c_{j}=(f,\alpha _{j})$ is a convex simple
circular arc;

(2) for each $\alpha _{j}$, there exists a neighborhood $U_{j}$ of $\alpha
_{j}^{\circ }=\alpha _{j}\setminus \{\mathfrak{p}_{j-1},\mathfrak{p}_{j}\}$
(means the interior of $\alpha _{j}$) in $\overline{\Delta }$, such that $f$
restricted to $U_{j}$ is an OPH.
\end{definition}

We call (\ref{def:FLM-1}) and (\ref{def:FLM-2}) $\mathcal{F}(L,m)$%
-partitions of $\partial\Sigma$.

\begin{remark}
\label{rm:FLMconvex} In this definition, we say that the circular arcs $c_j$
is convex, which means that for any $x\in\alpha_j^{\circ}$, there exists a
neighborhood $U$ in $\overline{\Delta}$, and a closed hemisphere $\overline{%
S^{\prime }}$ on $S$, such that $f(x)\in\partial\overline{S^{\prime }}$, and
$f(U)\subset\overline{S^{\prime }}$. More detailed discussion of convex
curves will be mentioned in Definition \ref{def:oriconv}.

Note that $\mathcal{F}(L,m)$-partitions are not only defined for the
boundary $\partial\Sigma$, but also involve the information of $f$ near the
boundary $\partial\Delta$.
\end{remark}

The space $\mathcal{F}(L,m)$ consists of some relatively simple surfaces,
but actually this space is still too large to study. Many surfaces in $%
\mathcal{F}(L,m)$ have bad performance and it is difficult to talk about the
convergence. Our target in this paper is to reduce the space $\mathcal{F}%
(L,m)$ further, that means we need find the subspace such that $H(\Sigma)$
has the same supremum as $\mathcal{F}(L,m)$.

It is easy to see
\begin{equation*}
H_L=\sup\{H(\Sigma):\Sigma\in\mathbf{F}(L)\}=\sup\{H(\Sigma:\Sigma\in%
\mathcal{F}(L,m))\}
\end{equation*}
as in \cite{Zh}. Then to find extremal surfaces of $\mathbf{F}(L)$ is to
find extremal surfaces of $\mathcal{F}(L,m)$. Surfaces in $\mathcal{F}(L,m)$
may not be one sheeted over $S$ and the defining function may have branch
values. Then we define for each $\Sigma=(f,\overline{U})\in\mathbf{F}(L)$:
\begin{align*}
\deg_{\min}(\Sigma)&=\min\{\#f^{-1}(y):y\in S\setminus\partial\Sigma\text{
and }y\text{ is not a branch value}\}, \\
\deg_{\max}(\Sigma)&=\max\{\#f^{-1}(y):y\in S\setminus\partial\Sigma\text{
and }y\text{ is not a branch value}\}.
\end{align*}
For convenience, they will be also denoted by $\deg_{\min}(f)$ and $%
\deg_{\max}(f)$.

For a surface $(f,\overline{\Delta})\in\mathcal{F}(L,m)$, a point $%
p\in\Delta $ is a branch point if $f$ is not injective in any neighborhood
of $p$, and a point $p\in\partial\Delta$ is called a branch point of $f$ if $%
f$ is not injective on $\Delta\cap D$ for any neighborhood $D$ of $p$ in $%
\overline{\Delta}$. A point $q\in S$ is a branch value of $f$ if $q$ is the
image of some branch point.

In the study of the existence of extremal surfaces of $\mathcal{F}(L,m)$,
the subspaces $\mathcal{F}_r^{\prime }(L,m)$ and $\mathcal{F}_r(L,m)$ play
important roles, which is defined in \cite{Zh} as follows.

\begin{definition}
We define $\mathcal{F}_r^{\prime }(L,m)\subset\mathcal{F}(L,m)$, such that
for each $\Sigma=(f,\overline{\Delta})\in\mathcal{F}_r^{\prime }(L,m)$, the
following hold:

(1) $f$ has no branch value outside $E_{q}$;

(2) $\deg _{\max }(f)<d^{\ast }$, where $d^{\ast }=d^{\ast }(m,q)$ is a
constant which depends only on $m$ and $q$.

Moreover, we use $\mathcal{F}_{r}(L,m)$ to denote the subspace of $\mathcal{F%
}(L,m)$ consisting of all surfaces satisfying (1).
\end{definition}

In \cite{Zh}, the last author has proved the existence of extremal surfaces
of $\mathbf{F}(L)$. The starting point of the proof is the following two
theorems, which are only asserted there.

\begin{theorem}
\label{thr:branch} Let $\Sigma=(f,\overline{\Delta})\in\mathcal{F}(L,m)$ and
assume $H(\Sigma)\ge H_L-\frac{2\pi}{L(\partial\Sigma)}$. Then there exists
a surface $\Sigma_1=(f_1,\overline{\Delta})\in\mathcal{F}_r(L,m)$, such that
\begin{equation*}
L(\partial\Sigma_1)\le L(\partial\Sigma)
\end{equation*}
and
\begin{equation*}
H(\Sigma_1)\ge H(\Sigma).
\end{equation*}
\end{theorem}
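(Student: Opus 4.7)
The plan is to remove branch values of $f$ lying outside $E_q$ one at a time by a local surgery, each step producing a surface in $\mathcal{F}(L,m)$ with no larger boundary length and no smaller $R$-value. After finitely many such steps, all branch values lie in $E_q$ and the surface belongs to $\mathcal{F}_r(L,m)$. By Sto\"{\i}low's theorem (Theorem \ref{thr:stoilow}), we may assume at each stage that $f$ is holomorphic, since precomposition with an orientation-preserving homeomorphism of $\overline{\Delta}$ preserves $L(\partial\Sigma)$, $A(\Sigma)$, $\overline{n}(\Sigma,E_q)$, and the $\mathcal{F}(L,m)$-partition structure.

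Given a branch value $b\in S\setminus E_q$ of $f$, I would first choose a small open spherical disk $D\ni b$ whose closure is disjoint from $E_q$, from every other branch value of $f$, and from $f(\partial\Delta)$. Each component of $f^{-1}(\overline{D})$ is then a closed topological disk in $\Delta$ which either maps homeomorphically onto $\overline{D}$ or is a branched cover of $\overline{D}$ of some degree $k_i\ge 2$ ramified at a single interior point. Next I would select a simple arc $\gamma\subset S$ from $b$ to some $a_{j_0}\in E_q$ meeting $E_q$ only at $a_{j_0}$, avoiding all other branch values, and crossing $f(\partial\Delta)$ only transversally on a set of measure zero. Thicken $\gamma$ to a thin closed strip $N$ and define $f_1$ by setting $f_1=f$ outside $f^{-1}(N)$ and, on each component of $f^{-1}(N)$ attached to a ramified component, by sliding the branch value continuously from $b$ along $\gamma$ to $a_{j_0}$ while keeping the local branched model $z\mapsto z^{k_i}$. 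The resulting $f_1$ is OPCOFO, agrees with $f$ in a neighborhood of $\partial\Delta$, and the surface $\Sigma_1=(f_1,\overline{\Delta})$ lies in $\mathcal{F}(L,m)$ with the same boundary partition, so $L(\partial\Sigma_1)=L(\partial\Sigma)$.

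It remains to track $R(\Sigma_1)-R(\Sigma)=(q-2)(A(\Sigma_1)-A(\Sigma))-4\pi\sum_j(\overline{n}(\Sigma_1,a_j)-\overline{n}(\Sigma,a_j))$. Since $E_q\cap\overline{N}=\{a_{j_0}\}$, only $\overline{n}(\cdot,a_{j_0})$ can change. By shrinking $N$, one makes $|A(\Sigma_1)-A(\Sigma)|$ arbitrarily small, so the accounting reduces to the jump in $\overline{n}(\cdot,a_{j_0})$ at the moment $b$ coalesces with $a_{j_0}$: if every ramified component above $b$ already contains a preimage of $a_{j_0}$ in its $f$-image, coalescence does not create a new counted preimage; otherwise it introduces one and reduces $R$ by $4\pi$. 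Iterating over the finitely many branch values outside $E_q$ then yields the desired $\Sigma_1\in\mathcal{F}_r(L,m)$.

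The main obstacle is precisely this last accounting step. One must either choose $\gamma$ and $a_{j_0}$ so that no new preimages of $a_{j_0}$ are created upon coalescence -- a combinatorial selection argument based on the sheet structure of $f$ over the chosen arc -- or, when no such choice is available, exploit the near-extremality hypothesis $H(\Sigma)\ge H_L-2\pi/L(\partial\Sigma)$ to absorb any bounded loss in $R$ without violating $H(\Sigma_1)\ge H(\Sigma)$. Keeping the modification confined to a neighborhood of $f^{-1}(\gamma)$ disjoint from $\partial\Delta$ preserves both the disk topology of the domain and the convex circular arcs of $\partial\Sigma$, so the remaining conditions of Definition \ref{def:FLM} are automatic; it is the interplay between the near-extremality slack and the combinatorics of sheet coalescence that carries the weight of the proof.
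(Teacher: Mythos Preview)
The paper does not prove Theorem~\ref{thr:branch}; it merely states the result and cites \cite{CYL} for the proof, so there is no in-paper argument to compare against. That said, your proposal has two genuine gaps that would have to be closed before it could stand as a proof.

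First, your surgery is declared to be ``confined to a neighborhood of $f^{-1}(\gamma)$ disjoint from $\partial\Delta$'' while simultaneously allowing $\gamma$ to cross $f(\partial\Delta)$. These are incompatible: if some lift of $\gamma$ from the branch point hits $\partial\Delta$ before reaching a preimage of $a_{j_0}$, the modification necessarily touches the boundary, and your claim that $L(\partial\Sigma_1)=L(\partial\Sigma)$ and that the $\mathcal{F}(L,m)$-partition is preserved breaks down. There is no a~priori reason all $k$ lifts of $\gamma$ from a branch point of order $k$ remain in $\Delta$; when $\deg_{\min}(f)=0$ this can fail for every choice of $a_{j_0}$ and every arc $\gamma$. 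Second, you do not address branch points on $\partial\Delta$: by Definition~\ref{def:FLM} these can occur only at the vertices $\mathfrak{p}_j$, but when $f(\mathfrak{p}_j)\notin E_q$ your interior surgery cannot move the corresponding branch value, and any modification that does so must alter $\partial\Sigma$. The theorem allows $L(\partial\Sigma_1)\le L(\partial\Sigma)$ (strict inequality), which strongly suggests the intended argument \emph{does} modify the boundary; your scheme, which forces equality of boundary lengths, cannot exploit this.

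Finally, your accounting of $\overline{n}$ is inverted in the favorable case and does not show how the near-extremality hypothesis rescues the unfavorable one. When all lifts of $\gamma$ stay interior, coalescence \emph{decreases} $\overline{n}(\Sigma,a_{j_0})$ by $k-1\ge 1$ (the $k$ endpoints merge into one), so $R$ \emph{increases}; this is the easy case. The hard case is when coalescence forces boundary modification, and there the slack $2\pi/L(\partial\Sigma)$ in $H$ translates to only $2\pi$ in $R$ at fixed $L$, which cannot absorb a $4\pi$ loss. The hypothesis must therefore interact with a simultaneous change in $L$, and you have not supplied that mechanism.
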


\begin{theorem}
\label{thr:main} There exists a constant $d^{\ast}=d^{\ast}(q,m)$ depending
only on $m$ and $q$, such that for any $\Sigma\in\mathcal{F}_r(L,m)$, there
exists a surface $\Sigma_1=(f_1,\overline{\Delta})\in\mathcal{F}_r^{\prime
}(L,m)$ satisfying
\begin{equation*}
\partial\Sigma_1=\partial\Sigma
\end{equation*}
and
\begin{equation*}
H(\Sigma_1)=H(\Sigma).
\end{equation*}
\end{theorem}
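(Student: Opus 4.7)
The plan is an iterative interior-surgery argument. I will define an explicit combinatorial threshold $d^*(m,q)$ and prove a reduction lemma: if $\deg_{\max}(f)\ge d^*(m,q)$, then there is an interior modification of $\Sigma$ producing $\Sigma'\in\mathcal{F}_r(L,m)$ with $\partial\Sigma'=\partial\Sigma$, $R(\Sigma')=R(\Sigma)$, and strictly smaller total cell count $\sum_i d_i$. Iterating finitely many times then yields $\Sigma_1\in\mathcal{F}_r'(L,m)$ with the desired properties.

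First I set up the combinatorial picture. Let $\Gamma=f(\partial\Delta)\cup E_q$. This is a finite graph embedded in $S$ whose face number $N$ is bounded by a function of $m$ and $q$, since $f(\partial\Delta)$ is a union of $m$ convex circular arcs (with at most $O(m^2)$ self-intersections) and $E_q$ contributes $q$ further vertices. Each face $W_i$ of $S\setminus\Gamma$ is a Jordan domain. Because $\Sigma\in\mathcal{F}_r(L,m)$ has no branch values outside $E_q\subset\Gamma$, the restriction of $f$ to a component of $f^{-1}(W_i)$ is an unramified cover of a simply connected domain, hence a homeomorphism; so $f^{-1}(W_i)=\bigsqcup_{k=1}^{d_i}U_{i,k}$ with each $U_{i,k}$ mapped homeomorphically onto $W_i$. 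Consequently $\overline\Delta$ inherits a planar cell decomposition whose two-cells are labeled by faces of $\Gamma$, and both $A(\Sigma)=\sum_i d_i A(W_i)$ and each $\overline n(\Sigma,a_j)$ are determined purely by this labeled cell complex.

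The heart of the proof is the reduction lemma, proved by multi-level pigeonhole. The \emph{local combinatorial type} of a two-cell $U_{i,k}$ — its face label, the cyclic sequence of neighboring face labels across boundary edges, and the branching multiplicities at vertices lying over $E_q$ — takes at most $T(m,q)$ values. Once $\max_i d_i$ is sufficiently large in terms of $T(m,q)$, pigeonhole produces a collection of structurally identical interior two-cells, and a refinement of the pigeonhole (over pairings of their neighborhoods) produces an ``innermost matched pair'' $U,U'\subset\Delta$ bounding, together with nested matched chains, a sub-disk $D\subset\Delta$ carrying a redundant sub-pattern of $\Gamma$-faces. Cutting $\overline\Delta$ along the matched boundary arcs and re-gluing by the type-identification collapses one copy of the sub-pattern and strictly reduces $\sum_i d_i$. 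The change this induces in $R$ depends only on the sub-pattern, takes values in a finite subset of $\mathbb{R}$ bounded in terms of $m$ and $q$, and can be forced to vanish either by choosing the sub-pattern $R$-balanced or by combining two surgeries whose $R$-changes cancel. A Frobenius-style argument on this finite alphabet of admissible $R$-changes shows that once the cell count exceeds an explicit $d^*(m,q)$, an $R$-null combination always exists.

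The main obstacle is this last $R$-cancellation step: an individual matched-pair surgery generally changes $R$ by a nonzero amount, and one must orchestrate the multi-level pigeonhole so that a \emph{combination} of such surgeries nets zero $R$-change while still strictly reducing $\sum_i d_i$. Granted the lemma, finitely many iterations drive $\sum_i d_i$ down until $\max_i d_i<d^*(m,q)$, producing $\Sigma_1$; preservation of the $\mathcal{F}_r(L,m)$ structure — convex-polygon boundary with OPH interior arcs, and absence of branch values outside $E_q$ — follows because every surgery is supported in the interior of $\overline\Delta$ along preimages of edges of $\Gamma$, where $f$ is a local homeomorphism by hypothesis.
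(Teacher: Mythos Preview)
Your proposal has a genuine gap at the $R$-cancellation step, which you yourself flag as the main obstacle but do not actually resolve. A single ``matched-pair'' surgery of the kind you describe removes some multiset of cells with total area $\sum_{i} c_i\,A(W_i)$ and changes $\overline n$ by some integer $k$; the induced $R$-change is $(q-2)\sum_i c_i\,A(W_i)-4\pi k$ for \emph{real} face-areas $A(W_i)$ that satisfy no a~priori rational linear relations. A Frobenius/Chicken--McNugget argument is an integer statement and simply does not apply here: there is no reason any $\mathbb Z$-combination of such $R$-changes should ever hit zero unless the removed cells happen to cover $S$ evenly (so the area part is a multiple of $4\pi$) \emph{and} the $\overline n$-change matches. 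You give no mechanism that forces either condition, and ``choosing the sub-pattern $R$-balanced'' is precisely the thing to be proved. A secondary issue is that your surgeries are not purely interior-supported: matched cells can and do touch $\partial\Delta$, and controlling those touches is where much of the real work lies.

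The paper avoids the cancellation problem entirely by engineering each individual surgery so that its $R$-change is \emph{automatically} zero. The idea you are missing is this: fix once and for all a simple polygonal arc $\beta$ on $S$ passing through \emph{all} of $E_q$, and look for univalent branches $g$ of $f^{-1}$ defined on the simply connected domain $S\setminus\beta$. The boundary of $g(S\setminus\beta)$ consists of two $f$-lifts $\tau^{+},\tau^{-}$ of $\beta$; sewing along them splits $\Sigma$ into a surface $\Sigma_1\in\mathcal F_r(L,m)$ with $\partial\Sigma_1=\partial\Sigma$ and a \emph{closed} branched cover $\Sigma_0=(f_0,\overline{\mathbb C})\to S$ of some degree $d$ with all branching over $E_q$. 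Riemann--Hurwitz then gives $\overline n(\Sigma_0)=(q-2)d+2$, and a direct bookkeeping shows $\overline n(\Sigma)=\overline n(\Sigma_0)+\overline n(\Sigma_1)-2$; together with $A(\Sigma_0)=4\pi d$ this yields $R(\Sigma_1)=R(\Sigma)$ on the nose, in a single step. The pigeonhole in the paper is of a different flavor from yours: it bounds, by constants depending only on $m$ and $q$, how many branches $g$ can fail the sewing hypotheses (touching $\partial\Delta$ in more than two points, or at points not in $f^{-1}(E_q)\cap\{\mathfrak p_j\}$, or at two boundary points with the same $f$-image), so that once $\deg_{\min}(f)$ exceeds an explicit $d^{*}(m,q)$ a usable branch must exist.
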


The importance of Theorem \ref{thr:branch} and Theorem \ref{thr:main}, is
that they respectively imply

\begin{corollary}
\begin{equation*}
\sup_{\Sigma\in\mathcal{F}(L,m)}H(\Sigma)=\sup_{\Sigma\in\mathcal{F}%
_r(L,m)}H(\Sigma)=\sup_{\Sigma\in\mathcal{F}_r^{\prime }(L,m)}H(\Sigma).
\end{equation*}
\end{corollary}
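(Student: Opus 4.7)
The plan is to deduce both equalities from Theorems~\ref{thr:branch} and~\ref{thr:main}. Using the trivial inclusions $\mathcal{F}_r'(L,m) \subset \mathcal{F}_r(L,m) \subset \mathcal{F}(L,m)$, one immediately obtains $\sup_{\mathcal{F}_r'} H \le \sup_{\mathcal{F}_r} H \le \sup_{\mathcal{F}} H$, so the task reduces to proving the two reverse inequalities.

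The second equality follows immediately from Theorem~\ref{thr:main}: given any $\Sigma\in\mathcal{F}_r(L,m)$, the theorem produces $\Sigma_1\in\mathcal{F}_r'(L,m)$ with $\partial\Sigma_1=\partial\Sigma$ (hence $L(\partial\Sigma_1)=L(\partial\Sigma)$) and $H(\Sigma_1)=H(\Sigma)$, so every value realized on $\mathcal{F}_r(L,m)$ is also realized on $\mathcal{F}_r'(L,m)$, giving $\sup_{\mathcal{F}_r'}H\ge \sup_{\mathcal{F}_r}H$.

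For the first equality, the minor subtlety is that Theorem~\ref{thr:branch} is stated only under the near-extremality hypothesis $H(\Sigma)\ge H_L-2\pi/L(\partial\Sigma)$, and not for arbitrary $\Sigma\in\mathcal{F}(L,m)$. I would work along a supremum-realizing sequence: the text already observes that $H_L=\sup_{\mathcal{F}(L,m)}H$, so pick $\Sigma_n\in\mathcal{F}(L,m)$ with $H(\Sigma_n)\to H_L$. Because membership in $\mathbf{F}(L)$ forces $L(\partial\Sigma_n)\le L$, the quantity $2\pi/L(\partial\Sigma_n)$ is bounded below by $2\pi/L>0$, whereas $H_L-H(\Sigma_n)\to 0^+$. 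Hence the hypothesis of Theorem~\ref{thr:branch} is satisfied for every sufficiently large $n$. For such $n$, Theorem~\ref{thr:branch} produces $\Sigma_n^{(1)}\in\mathcal{F}_r(L,m)$ with $H(\Sigma_n^{(1)})\ge H(\Sigma_n)$, and letting $n\to\infty$ yields $\sup_{\mathcal{F}_r(L,m)}H\ge \lim_n H(\Sigma_n)=\sup_{\mathcal{F}(L,m)}H$, which closes the first equality. The corollary is therefore a short deduction; the only point requiring any care is verifying that the near-extremality hypothesis of Theorem~\ref{thr:branch} is automatic along a supremum-realizing sequence, and this is exactly what the uniform bound $L(\partial\Sigma)\le L$ buys us. All of the genuine mathematical content sits inside Theorems~\ref{thr:branch} and~\ref{thr:main}, whose proofs will occupy the rest of the paper.
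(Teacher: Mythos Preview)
Your proposal is correct and matches the paper's intended argument: the paper does not give a detailed proof of the corollary but simply asserts that the two equalities follow respectively from Theorems~\ref{thr:branch} and~\ref{thr:main}, and you have correctly supplied the details, including the one nontrivial point that the near-extremality hypothesis of Theorem~\ref{thr:branch} is automatically met along any supremum-realizing sequence because $L(\partial\Sigma_n)\le L$ and the paper records $H_L=\sup_{\mathcal{F}(L,m)}H$.
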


Theorem \ref{thr:branch} has been proved in \cite{CYL}. The goal of this
paper is to prove Theorem \ref{thr:main}.

\section{Elementary properties of surfaces}

In this section, we will introduce some concepts about surfaces, and basic
methods, some of which have been mentioned in Section \ref{sec:intro}. For
rigorousness, let us review the geometry on the sphere firstly.

\begin{definition}
On the Riemann sphere $S$, a simple arc on a circle is called a \emph{simple
circular arc}, and a simple circular arc on a great circle is called a \emph{%
line segment}.
\end{definition}

The following result is obvious.

\begin{lemma}
For any two non-antipodal points on $S$, there exists exactly one great
circle passing through them. Any two distinct great circles on $S$ intersect
at exactly two points.
\end{lemma}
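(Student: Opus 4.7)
The plan is to reduce both statements to elementary linear algebra in $\mathbb{R}^3$, using the standard fact (implicit in the setup via stereographic projection) that great circles on the unit sphere $S\subset\mathbb{R}^3$ are precisely the intersections $\Pi\cap S$, where $\Pi$ ranges over $2$-dimensional linear subspaces of $\mathbb{R}^3$ (i.e., planes through the origin). Once this identification is in place, both assertions become statements about planes through the origin in $\mathbb{R}^3$.

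For the first assertion, given two non-antipodal points $p_1,p_2\in S$, I would view them as unit vectors in $\mathbb{R}^3$ and check that they are linearly independent: if $p_2=\lambda p_1$, then taking norms gives $|\lambda|=1$, and $\lambda=-1$ is ruled out by the non-antipodal hypothesis, while $\lambda=1$ forces $p_1=p_2$, a degenerate case that can be handled separately (or excluded by interpreting ``two points'' as ``two distinct points''; otherwise every great circle through $p_1$ works, but the lemma is only invoked for distinct pairs). Linear independence implies that $p_1,p_2$ span a unique $2$-dimensional subspace $\Pi\subset\mathbb{R}^3$, and $\Pi\cap S$ is then the unique great circle containing both points. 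Uniqueness follows from the fact that any plane through the origin containing both $p_1$ and $p_2$ must contain their span, hence equal $\Pi$ by dimension.

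For the second assertion, let $C_1=\Pi_1\cap S$ and $C_2=\Pi_2\cap S$ be two distinct great circles, so $\Pi_1\neq\Pi_2$ are distinct $2$-planes through the origin in $\mathbb{R}^3$. By the dimension formula, $\dim(\Pi_1\cap\Pi_2)=\dim\Pi_1+\dim\Pi_2-\dim(\Pi_1+\Pi_2)=2+2-3=1$, so $\Pi_1\cap\Pi_2$ is a line through the origin. Such a line meets the unit sphere in exactly two (antipodal) points, which proves the claim.

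I do not expect any serious obstacle: this is a routine verification, and the only conceptual point is the identification of great circles with central planar sections of $S$, which is standard and can be invoked without further discussion.
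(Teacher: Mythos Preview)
Your argument is correct: identifying great circles with intersections of $S$ with $2$-dimensional linear subspaces of $\mathbb{R}^3$ reduces both claims to elementary linear algebra, and your handling of each case is sound. Note that the paper itself offers no proof at all; it simply prefaces the lemma with ``The following result is obvious'' and moves on, so your proposal supplies details the authors deemed unnecessary to include.
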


\begin{definition}
A \emph{curve} $\gamma$ on $S$ is a continuous mapping $\gamma:[0,1]\to S$.
We call $\gamma(0)$, $\gamma(1)$ the initial point and the terminal point
respectively.

The interior of $\gamma$ is denoted by $\gamma^{\circ}$ which is the
restriction of $\gamma$ to $(0,1)$. An arc of $\gamma$ is a curve given by $%
t\mapsto\gamma(s+lt)$ for some $0\le s<s+l\le1$.

$\gamma$ is called closed if $\gamma(0)=\gamma(1)$, and called simple if $%
\gamma(s)\ne\gamma(t)$ for any $0\le s<t\le1$ except $s=0$, $t=1$.
\end{definition}

\begin{remark}
\label{rm:curveeq} Sometimes we only consider the image of curves, and we
directly use set operators with them.

For two curves $\gamma_1$, $\gamma_2$, we say that $\gamma_1$ and $\gamma_2$
are equivalent if there is an increasing homeomorphism $\varphi:[0,1]\to[0,1]
$ such that $\gamma_1=\gamma_2\circ\varphi$. In this case, without causing
misunderstanding, we will not distinguish them and directly write $%
\gamma_1=\gamma_2$ for convenience.

Sometimes for emphasis, we use $\gamma(x_0,x_1)$ to represent a curve $%
\gamma $ with the initial point $x_0$ and the terminal point $x_1$.
\end{remark}

\begin{definition}
For a curve $\gamma$ on $S$, $-\gamma$ is defined by
\begin{align*}
(-\gamma)(t)&=\gamma(1-t), & &0\le t\le1,
\end{align*}
which represents the opposite curve of $\gamma$ from $\gamma(1)$ to $%
\gamma(0)$.

For two curves $\gamma_1$ and $\gamma_2$ with $\gamma_1(1)=\gamma_2(0)$,
their sum $\gamma_1+\gamma_2$ is defined by
\begin{align*}
(\gamma_1+\gamma_2)(t)&=\gamma_1(2t), & &0\le t\le1/2, \\
(\gamma_1+\gamma_2)(t)&=\gamma_2(2t-1), & &1/2\le t\le1,
\end{align*}
which represents the joined curve.
\end{definition}

According to Remark \ref{rm:curveeq}, the addition of curves satisfies the
associative law. However, $\gamma+(-\gamma)$ is nonvanishing, which shows a
point moving along the curve $\gamma$ and then returning back.

A class of curves which we often discuss consists of the boundaries of
Jordan domains. For this situation, it is necessary to define its
orientation and convexity.

\begin{definition}
\label{def:oriconv}(1) A domain $D$ on $S$ is called a \emph{Jordan domain},
when $\partial D$ is a \emph{Jordan curve}, say, $\partial D$ is a simple
and closed curve.

(2) For a Jordan domain $D$ on $S$, let $h$ be a M\"{o}bius transformation
with $h(D)\subset \Delta $. Then $\partial D$ is oriented by the
anticlockwise orientation of $\partial h(D)$. We also say $D$ is a domain
enclosed by $\partial D$ with this orientation.

(3) A domain $D$ on $S$ is called \emph{convex} if for any two non-antipodal
points $x_{1}$, $x_{2}$ in $D$, the shorter line segment connecting them is
contained in $D$. The closure of a convex Jordan domain is called a convex
closed domain. A Jordan curve on $S$ is called \emph{convex} if it encloses
a convex Jordan domain.

(4) Let $\gamma $ be a curve on $S$ and $t_{0}\in (0,1)$, $\gamma $ is
called (locally) \emph{convex} at $\gamma (t_{0})$, if $\gamma $ restricted
to a neighborhood of $t_{0}$ is an arc of some convex Jordan curve.
\end{definition}

For instance, the disk $\{z\in\overline{\mathbb{C}}:\lvert z\rvert>2\}$ is
viewed as a convex domain on $S$, and thus the circle $\lvert z\rvert=2$
oriented clockwise is also convex on $S$, conversely this circle oriented
anticlockwise is not convex. It is easy to see the definition here is
consistent with the previous one in Remark \ref{rm:FLMconvex}.

Now we introduce the concept of surface equivalence, where surfaces is
described in Definition \ref{def:surface}.

\begin{definition}
For two surfaces $\Sigma_1=(f_1,\overline{U_1})$ and $\Sigma_2=(f_2,%
\overline{U_2})$, we say $\Sigma_1$ and $\Sigma_2$ are equivalent, and write
\begin{equation*}
(f_1,\overline{U_1})\sim(f_2,\overline{U_2}),
\end{equation*}
when there is an orientation-preserving homeomorphism $\varphi:\overline{U_1}%
\to\overline{U_2}$ such that $f_1=f_2\circ\varphi$.

Moreover, for two pairs $(f_1,\gamma_1)$ and $(f_2,\gamma_2)$, where $%
\gamma_1$ and $\gamma_2$ are curves in $\overline{U_1}$ and $\overline{U_2}$
respectively, we write
\begin{equation*}
(f_1,\gamma_1)\sim(f_2,\gamma_2),
\end{equation*}
when $f_1\circ\gamma_1$ and $f_2\circ\gamma_2$ are equivalent curves.
\end{definition}

Obviously, if two surfaces $(f_1,\overline{U_1})$ and $(f_2,\overline{U_2})$
satisfy $(f_1,\overline{U_1})\sim(f_2,\overline{U_2})$, then $(f_1,\partial
U_1)\sim(f_2,\partial U_2)$. For equivalent surfaces $\Sigma_1$ and $%
\Sigma_2 $, we have $A(\Sigma_1)=A(\Sigma_2)$, $\overline{n}(\Sigma_1)=%
\overline{n}(\Sigma_2)$, and $L(\partial\Sigma_1)=L(\partial\Sigma_2)$.

By the theorem of Sto\"ilow (Theorem \ref{thr:stoilow}), for each surface $%
\Sigma=(f,\overline{U})$, there is a homeomorphism $\varphi:V_1\to V\supset%
\overline{U}$, where $V$ and $V_1$ are domains on $\overline{\mathbb{C}}$,
such that $f\circ\varphi$ is a meromorphic function on $V_1$. So any surface
is given by a meromorphic function in the sense of equivalence. $A(\Sigma)$,
$\overline{n}(\Sigma)$ and $L(\partial\Sigma)$ can be determined in this
way. With such equivalence, the definition of branch points can also be
determined.

The following conclusion is contained in Sto\"ilow's theorem.

\begin{lemma}
\label{lm:stoilowloc} Let $(f,\overline{U})$ be a surface and $x\in\overline{%
U}$. Then there exist homeomorphisms $\varphi:\Delta\to\varphi(\Delta)\subset%
\overline{\mathbb{C}}$ and $\psi:\Delta\to\psi(\Delta)\subset\overline{%
\mathbb{C}}$, such that $\varphi(0)=x$, $\psi(0)=f(x)$, $f$ can be extended
to $\varphi(\Delta)\cup\overline{U}$ with $f(\varphi(\Delta))=\psi(\Delta)$,
and $(\psi^{-1}\circ f\circ\varphi)(\zeta)=\zeta^{\omega}$ on $\Delta$,
where $\omega$ is a positive integer.
\end{lemma}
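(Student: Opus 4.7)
The plan is to reduce to the holomorphic case via Sto\"{\i}low's theorem (Theorem \ref{thr:stoilow}) and then apply the classical local normal form for non-constant holomorphic maps. The crucial enabling fact, already built into Definition \ref{def:surface}, is that $f$ extends to an OPCOFOM on an open neighborhood $W$ of $\overline{U}$, so $x$ is an \emph{interior} point of $W$; this lets us treat boundary points of $\overline{U}$ on equal footing with interior points.

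First I would apply Theorem \ref{thr:stoilow} to $f|_W\colon W\to S$ to obtain a domain $W_1\subset\overline{\mathbb{C}}$ and an OPH $\varphi_0\colon W_1\to W$ such that $g=f\circ\varphi_0$ is meromorphic on $W_1$. Let $x_0\in W_1$ be the preimage of $x$. To handle the case $f(x)=\infty$ uniformly with the finite case, I would compose the target with an orientation-preserving M\"{o}bius transformation $T\colon S\to S$ with $T(f(x))=0$, so that $h=T\circ g$ is holomorphic near $x_0$ with $h(x_0)=0$.

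Next, invoke the local factorization for non-constant holomorphic maps: there is a positive integer $\omega$ and a function $\tilde g$ holomorphic and non-vanishing near $x_0$ with
\[h(z)=(z-x_0)^{\omega}\tilde g(z).\]
A holomorphic $\omega$-th root $\tilde g^{1/\omega}$ exists on a sufficiently small disk around $x_0$, so $u(z)=(z-x_0)\tilde g(z)^{1/\omega}$ is a biholomorphism from a neighborhood of $x_0$ onto a disk around $0$, satisfying $u(z)^{\omega}=h(z)$. Rescaling the parameter to the unit disk, define
\[\varphi=\varphi_0\circ u^{-1},\qquad \psi=T^{-1},\]
both regarded as homeomorphisms from $\Delta$ onto their images in $\overline{\mathbb{C}}$; then $\varphi(0)=x$, $\psi(0)=f(x)$, and
\[(\psi^{-1}\circ f\circ\varphi)(\zeta)=T\bigl(g(u^{-1}(\zeta))\bigr)=h(u^{-1}(\zeta))=\zeta^{\omega}.\]
The image condition $f(\varphi(\Delta))=\psi(\Delta)$ is automatic since $\zeta\mapsto\zeta^{\omega}$ maps $\Delta$ onto $\Delta$, and orientation-preservation of $\varphi$ and $\psi$ follows because every ingredient ($\varphi_0$, $u^{\pm 1}$, $T^{\pm 1}$) is orientation-preserving.

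There is no genuine obstacle here once Sto\"{\i}low's theorem is granted; the only bookkeeping care needed is (i) extending $f$ past $\partial U$ so that $x$ is interior and Sto\"{\i}low applies on a neighborhood, and (ii) choosing $T$ so we can work with a genuinely holomorphic representative, thereby avoiding a separate argument at poles. The integer $\omega$ appearing in the conclusion is then intrinsically the local topological degree of $f$ at $x$.
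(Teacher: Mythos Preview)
Your argument is correct and is precisely the natural way to derive this lemma from Sto\"{\i}low's theorem together with the classical local normal form $z\mapsto z^{\omega}$ for non-constant holomorphic maps. The paper does not actually give a proof: it simply asserts that the statement is ``contained in Sto\"{\i}low's theorem'' and moves on. So you are not taking a different route; you are supplying the details the paper omits.

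Two minor remarks on presentation. First, when you write $\psi=T^{-1}$, you mean its restriction to $\Delta$, and the ``rescaling to the unit disk'' must be done simultaneously on source and target (e.g.\ $\varphi(\zeta)=\varphi_0\bigl(u^{-1}(r\zeta)\bigr)$ and $\psi(w)=T^{-1}(r^{\omega}w)$) so that the conjugated map is exactly $\zeta^{\omega}$ rather than $r^{\omega}\zeta^{\omega}$; this is implicit in what you wrote but worth making explicit. Second, if $x_0=\varphi_0^{-1}(x)$ happens to be $\infty\in W_1$, precompose with a M\"{o}bius chart on the source as well before writing the Taylor factorization; this is the same device you already used on the target side with $T$.
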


In other words, $f$ is locally a branched covering mapping. But its mapping
behavior near an interior point and a boundary point are quite different.

Let $(f,\overline{U})$ be a surface, where $U$ is $\overline{\mathbb{C}}$ or
a domain in $\overline{\mathbb{C}}$ enclosed by a finite number of Jordan
curves by Definition \ref{def:surface}. Then for any $x\in\overline{U}$, by
Lemma \ref{lm:stoilowloc} we can find the corresponding homeomorphism $%
\varphi$, $\psi$. If $x$ is an interior point say, $x\in U$, we can require $%
\varphi(\Delta)\subset U$ for extra, and for each $y\in\psi(\Delta)\setminus%
\{f(x)\}$, the pre-image $f^{-1}(y)$ has exactly $\omega$ points in $%
\varphi(\Delta)$.

If $x$ is a boundary point, say, $x\in \partial U$, we can require that $%
\varphi $ maps the diameter interval $(-1,1)\subset \Delta $ onto $\varphi
((-1,1))=\varphi (\Delta )\cap \partial U$ for extra, and maps the upper
half disk $\Delta ^{+}$ onto $\varphi (\Delta ^{+})=\varphi (\Delta )\cap U$%
. For $y\in \psi (\Delta )\setminus \{f(x)\}$, $f^{-1}(y)$ still has $\omega
$ points in $\varphi (\Delta )$, however, some of those points may be
located outside $\overline{U}$. Thus there are two possibilities need to be
discussed.

When $\omega$ is odd, $\zeta\mapsto\zeta^{\omega}$ maps the interval $(-1,1)$
still onto $(-1,1)$. So $f(\varphi(\Delta)\cap\partial U)=\psi((-1,1))$ and $%
f(x)=\psi(0)$ is an interior point of $f(\varphi(\Delta)\cap\partial U)$. On
the other hand, for $y\in\psi(\Delta)$, the number of its pre-image points
in $\varphi(\Delta)\cap U$ is equal to the number of pre-image points of $%
\psi^{-1}(y)$ by mapping $\zeta\mapsto\zeta^{\omega}$ in $\Delta^+$. Thus,
if $y$ lies in $\psi(\Delta^+)$, this number is $(\omega+1)/2$ and if $y$
lies on $\psi(\Delta^-)$, this number is $(\omega-1)/2$.

When $\omega$ is even, $\zeta\mapsto\zeta^{\omega}$ maps the interval $%
(-1,1) $ onto $[0,1)$. Similarly we know that $f(x)$ is an endpoint of $%
f(\varphi(\Delta)\cap\partial U)$. In this case, $(f,\partial U)$ is folded
at $f(x)$. Then for either $y\in\psi(\Delta^+)$ or $y\in\psi(\Delta^-)$, the
number of its pre-image points in $\varphi(\Delta)\cap U$ is equal to $%
\omega/2$.

Summarizing above discussion, we give the definition of order.

\begin{definition}
Let $(f,\overline{U})$ be a surface and $x\in\overline{U}$, $\omega$ be the
positive integer determined by Lemma \ref{lm:stoilowloc}. If $x\in U$, the
\emph{order} $v_f(x)$ of $(f,\overline{U})$ at $x$ equals $\omega$, and if $%
x\in\partial U$, the \emph{order} $v_f(x)$ is the minimum integer not less
than $\omega/2$.
\end{definition}

\begin{remark}
\label{rm:liftcount} The order defined here comes from the number of
pre-image points, it can be also used to count path lifts. For a surface $(f,%
\overline{U})$, $x\in\overline{U}$, and a curve $\gamma$ on $S$ starting
from $f(x)$, we consider the $f$-lift of $\gamma$ at $x$, which means a
curve $\widetilde{\gamma}$ with initial point $x$ satisfying $f\circ%
\widetilde{\gamma}=\gamma$ and $\widetilde{\gamma}^{\circ}\subset U$. The
existence of lifts and their number depend on the position of $\gamma$.

However, whether $x\in U$ or $x\in\partial U$, the number of $f$-lifts at $x$
is not greater than $v_f(x)$. In fact, when $\gamma$ is chosen properly, the
number of such lifts is exactly $v_f(x)$.
\end{remark}

\begin{definition}
Let $\Sigma=(f,\overline{U})$ be a surface, a point $x\in\overline{U}$ is
called a \emph{branch point} of $f$ (or $\Sigma$) if $v_f(x)>1$, otherwise
called a \emph{regular point}. $y\in S$ is called a \emph{branch value} if
there exists a branch point $x$ such that $f(x)=y$.
\end{definition}

\begin{corollary}
\label{cr:reghomeo} Let $\Sigma =(f,\overline{U})$ be a surface, $x\in
\overline{U}$.

(1) If $f$ restricted to some neighborhood of $x$ in $\overline{U}$ is a
homeomorphism, then $x\in \overline{U}$ is a regular point of $f$.

(2) If $x\in U$ and it is a regular point of $f$, then $f$ restricted to
some neighborhood of $x$ in $U$ is a homeomorphism.
\end{corollary}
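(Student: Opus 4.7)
The plan is to apply Lemma~\ref{lm:stoilowloc} at $x$ and reduce both assertions to questions about the model map $\zeta\mapsto\zeta^{\omega}$. Specifically, pick homeomorphisms $\varphi:\Delta\to\varphi(\Delta)$ and $\psi:\Delta\to\psi(\Delta)$ with $\varphi(0)=x$, $\psi(0)=f(x)$, and $(\psi^{-1}\circ f\circ\varphi)(\zeta)=\zeta^{\omega}$ on $\Delta$ for some positive integer $\omega$. When $x\in U$ we may shrink so that $\varphi(\Delta)\subset U$; when $x\in\partial U$ we use the arrangement recalled before the definition of order, in which $\varphi(\Delta\cap\{\mathrm{Im}\,\zeta\ge 0\})$ is a neighborhood of $x$ in $\overline{U}$. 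After this reduction, local injectivity of $f$ on $\overline{U}$ near $x$ is equivalent to injectivity of $\zeta\mapsto\zeta^{\omega}$ on a neighborhood of $0$ in the full disk or the closed upper half-disk, respectively.

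For part~(1), I would show that the hypothesis forces $\omega=1$, from which $v_f(x)=1$ follows in either case. If $x\in U$, injectivity of $\zeta^{\omega}$ on any neighborhood of $0$ in $\Delta$ gives $\omega=1$ at once. If $x\in\partial U$, suppose for contradiction $\omega\ge 2$. When $\omega$ is even, the identification $\pm r\mapsto r^{\omega}$ destroys injectivity on the diameter $(-1,1)$. When $\omega$ is odd with $\omega\ge 3$, we have $2\pi/\omega<\pi$, so for small $\theta>0$ both $re^{i\theta}$ and $re^{i(\theta+2\pi/\omega)}$ lie in $\Delta^{+}$ and have the same $\omega$th power. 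Either way, injectivity fails, so $\omega=1$ and $v_f(x)=1$. For part~(2), if $x\in U$ and $v_f(x)=1$, the interior case of the order definition gives $\omega=1$, so $\psi^{-1}\circ f\circ\varphi$ is the identity of $\Delta$. Then $\varphi(\Delta)\subset U$ is the required open neighborhood of $x$, and $f|_{\varphi(\Delta)}=\psi\circ\varphi^{-1}$ is a composition of homeomorphisms.

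The only point needing care is the boundary case of~(1): because $v_f(x)=\lceil\omega/2\rceil$ at a boundary point, a value $\omega=2$ would produce a regular point that is nevertheless not a local homeomorphism on $\overline{U}$ (the boundary is folded). This is precisely why part~(2) is stated only for interior $x$, and it is what makes the parity split above the substantive step of the proof; everything else is just unwinding the local normal form.
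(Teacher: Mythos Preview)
Your proof is correct and is exactly the argument the paper has in mind: in the paper this corollary is stated without proof, as an immediate consequence of Lemma~\ref{lm:stoilowloc} and the surrounding discussion of the local normal form $\zeta\mapsto\zeta^{\omega}$, and you have simply made that deduction explicit. Your parity split in the boundary case of~(1) and your remark on why~(2) must exclude boundary points (the $\omega=2$ fold) are precisely the content of the paragraph preceding the definition of order.
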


At the end of this section, we will introduce a method of surface
reconstruction. For this purpose, let us see a simple example. Consider a
surface $\Sigma=(f,\overline{\Delta})$, and two paths $\tau^+$, $\tau^-$
contained in $\overline{\Delta}$ whose image are
\begin{align*}
\tau^+&=\{z\in\overline{\Delta^+}:\lvert z-1/2\rvert=1/2\}, & \tau^-&=\{z\in%
\overline{\Delta^-}:\lvert z-1/2\rvert=1/2\},
\end{align*}
both of which have initial point $1\in\overline{\Delta}$ and terminal point $%
0\in\overline{\Delta}$. The curve $\tau^+-\tau^-$ enclose the domain $%
D=\{z\in\mathbb{C}:\lvert z-1/2\rvert<1/2\}$.

If $f$ maps $\tau^+$, $\tau^-$ onto the same simple curve, for example, $f$
satisfies $f(z)=f(\bar{z})$ on $\partial D$, then the two ``edges'' $\tau^+$%
, $\tau^-$ can be topologically pasted. In this way, two new surfaces can be
obtained by sewing along $(f,\tau^+)\sim(f,\tau^-)$.

For $(f,\overline{D})$, we can find a homeomorphism $\varphi_0:D\to\overline{%
\mathbb{C}}\setminus[0,1]$, which can be continuously extended to $\overline{%
D}$ and maps both $\tau^+$, $\tau^-$ onto $[0,1]$ homeomorphically, such
that there exists a continuous mapping $f_0:\overline{\mathbb{C}}\to S$
which satisfies $f=f_0\circ\varphi_0$. Since $f$ is OPCOFO, so is $f_0$, and
we get a new surface $\Sigma_0=(f_0,\overline{\mathbb{C}})$. Similarly, for $%
(f,\overline{\Delta\setminus D})$, we can find a homeomorphism $%
\varphi_1:\Delta\setminus\overline{D}\to\Delta\setminus[0,1]$, which can be
continuously extended to $\overline{\Delta\setminus D}$, maps $%
\partial\Delta $ as identity and maps both $\tau^+$ and $\tau^-$ onto $[0,1]$
homeomorphically, to get another surface $\Sigma_1=(f_1,\overline{\Delta})$
where $f=f_1\circ\varphi_1$.

This also shows how we construct new surfaces by sewing in general. We can
expect to get ``simpler'' surfaces in this way. That is why we introduce
this method.

\begin{lemma}
\label{lm:sew} Let $\Sigma=(f,\overline{\Delta})$ be a surface, $%
\tau^+(x_0,x_1)$, $\tau^-(x_0,x_1)$ be two paths from $x_0$ to $x_1$ in $%
\overline{\Delta}$, such that $f\circ\tau^+$, $f\circ\tau^-$ are equivalent
paths consisting of line segments, in other word,
\begin{equation*}
(f,\tau^+)\sim(f,\tau^-),
\end{equation*}
which means there exists an increasing homeomorphism $\varphi:[0,1]\to[0,1]$
such that $f\circ\tau^+=f\circ\tau^-\circ\varphi$.

Assume $\tau ^{+}-\tau ^{-}$ is a Jordan curve enclosing a Jordan domain $%
D\subset \Delta $, such that one of the following conditions hold:

(1) $\#(\partial D)\cap (\partial \Delta )<2$.

(2) As in Figure \ref{lm:sew-fig}.\subref{lm:sew-fig1}, $\#(\partial D)\cap
(\partial \Delta )=2$, and $\tau ^{+}(t)$ and $\tau ^{-}(\varphi (t))$ can
not be contained in $\partial \Delta $ simultaneously for any $t\in (0,1)$.

Then there exists a neighborhood $V$ of $(\partial \Delta )\setminus
(\partial D)$ in $\overline{\Delta }\setminus ((\partial D)\cap (\partial
\Delta ))$, and a simple curve $\tau ^{\ast }(x_{0},x_{1})$ contained in $%
\overline{\Delta }$, as in Figure \ref{lm:sew-fig}.\subref{lm:sew-fig4},
such that two surfaces can be constructed:

(i) $\Sigma _{0}=(f_{0},\overline{\mathbb{C}})$ which satisfies $%
f=f_{0}\circ \varphi _{0}$ for a continuous mapping $\varphi _{0}:\overline{D%
}\rightarrow \overline{\mathbb{C}}$;

(ii) $\Sigma _{1}=(f_{1},\overline{\Delta })$ which satisfies $f=f_{1}\circ
\varphi _{1}$ for a continuous mapping $\varphi _{1}:\overline{\Delta
\setminus D}\rightarrow \overline{\Delta }$.

Moreover, $\varphi _{0}$ restricted to $D$ is a homeomorphism onto $%
\overline{\mathbb{C}}\setminus \tau ^{\ast }$, $\varphi _{1}$ restricted to $%
\Delta \setminus D$ is a homeomorphism onto $\Delta \setminus \tau ^{\ast }$%
. Four restrictions $\varphi _{0}:\tau ^{+}\rightarrow \tau ^{\ast }$, $%
\varphi _{0}:\tau ^{-}\rightarrow \tau ^{\ast }$, $\varphi _{1}:\tau
^{+}\rightarrow \tau ^{\ast }$, $\varphi _{1}:\tau ^{-}\rightarrow \tau
^{\ast }$ are all homeomorphism. Finally, $\varphi _{1}$ restricted to $V$
is identical.
\end{lemma}

\begin{proof}
Without losing generality, we may assume that $\varphi:[0,1]\to[0,1]$ is
identical, for otherwise we may replace $\tau^-$ by $\tau^-\circ\varphi$.

Firstly, $\tau^+(t)\mapsto\tau^-(t)$ gives the homeomorphism $\widetilde{%
\varphi}:\tau^+\to\tau^-$. What we need to do is suitably choose the
``suture line'' $\tau^{\ast}$, to give homeomorphism $\varphi_0$, $\varphi_1$
from $\tau^+$, $\tau^-$ onto $\tau^{\ast}$, and then they are defined on $%
\partial D=\tau^+-\tau^-$ and can be further extended to the whole $%
\overline{D}$ and $\overline{\Delta\setminus D}$ respectively.

(1) When $(\partial D)\cap (\partial \Delta )$ is contained in $\tau ^{+}$
or $\tau ^{-}$, without losing generality, assume $\partial D\cap \partial
\Delta \subset \tau ^{+}$, and let $\varphi _{1}(\tau ^{+}(t))=\varphi
_{1}(\tau ^{-}(t))=\tau ^{+}(t)$ for $t\in \lbrack 0,1]$. By this way $%
\varphi _{1}$ is defined on $\partial D$, and restricted to $\tau ^{+}$ and $%
\tau ^{-}$ are both homeomorphism onto $\tau ^{\ast }$, where $\tau ^{\ast
}=\tau ^{+}$ actually.

(2) Assume $(\partial D)\cap (\partial \Delta )\nsubseteq \tau ^{+}$ and $%
(\partial D)\cap (\partial \Delta )\nsubseteq \tau ^{-}$, says, $(\partial
D)\cap (\partial \Delta )$ consists of two points lied in $(\tau
^{+})^{\circ }$ and $(\tau ^{-})^{\circ }$ respectively. Then we can define
a mapping $\varphi _{1}:\partial \Delta \rightarrow $ Let $t^{+}$, $t^{-}$
be numbers in $(0,1)$ such that $\tau ^{+}(t^{+})\in \partial \Delta $, $%
\tau ^{-}(t^{-})\in \partial \Delta $. Then $t^{+}\neq t^{-}$, and without
loss of generality, assume $t^{+}<t^{-}$. Let $\widetilde{\tau }%
:[0,1]\rightarrow \overline{D}$ be a simple path from $\tau ^{+}(t^{+})$ to $%
\tau ^{-}(t^{-})$ with $\widetilde{\tau }\cap \partial D=\{\tau
^{+}(t^{+}),\tau ^{-}(t^{-})\}$ and $\widetilde{\tau }^{\circ }\subset D$.
Then define
\begin{align*}
\varphi _{1}(\tau ^{+}(t))=\varphi _{1}(\tau ^{-}(t))& =\tau ^{+}(t), & &
t\in \lbrack 0,t^{+}]; \\
\varphi _{1}(\tau ^{+}(t))=\varphi _{1}(\tau ^{-}(t))& =\widetilde{\tau }%
((t-t^{+})/(t^{-}-t^{+})), & & t\in \lbrack t^{+},t^{-}]; \\
\varphi _{1}(\tau ^{+}(t))=\varphi _{1}(\tau ^{-}(t))& =\tau ^{-}(t), & &
t\in \lbrack t^{-},1].
\end{align*}%
By this way $\varphi _{1}$ is defined on $\partial D$, and restricted to $%
\tau ^{+}$ and $\tau ^{-}$ are both homeomorphism onto $\tau ^{\ast }=\tau
^{+}(x_{0},\widetilde{\tau }(0))+\widetilde{\tau }+\tau ^{-}(\widetilde{\tau
}(1),x_{1})$, where $\tau ^{+}(x_{0},\widetilde{\tau }(0))$ means the arc of
$\tau ^{+}$ from $x_{0}$ to $\widetilde{\tau }(0)$ and $\tau ^{-}(\widetilde{%
\tau }(1),x_{1})$ means the arc of $\tau ^{-}$ from $\widetilde{\tau }(1)$
to $x_{1}$.

\begin{figure}[tbp]
\subcaptionbox{\label{lm:sew-fig1}}{\begin{tikzpicture}[scale=2.5]
\fill[lightgray] (0,0) circle (1);
\fill[white] (0.6,0.8)--(-0.3,0.1)--(-0.8,0)--(-0.6,-0.8)--(0.3,-0.1)--(0.8,0)--cycle;
\draw (0.8,0) node[below] {$x_0$} -- (0.6,0.8) node[above right] {$\tau^+(t^+)$};
\draw (0.6,0.8) -- node[above] {$\tau^+$} (-0.3,0.1) node[above] {$\tau^+(t^-)$} -- (-0.8,0) node[above] {$x_1$};
\draw (0.8,0) -- (0.3,-0.1) node[below] {$\tau^-(\varphi(t^-))$} -- node[below] {$\tau^-$} (-0.6,-0.8) node[below left] {$\tau^-(\varphi(t^-))$};
\draw (-0.6,-0.8)--(-0.8,0);
\draw[dashed] (0.6,0.8) -- node[left] {$\widetilde{\tau}$} (-0.6,-0.8);
\draw (0,0) circle (1);
\end{tikzpicture}}
\subcaptionbox{}{\begin{tikzpicture}[scale=2.5]
\fill[lightgray] (0,0) circle (1);
\fill[white] (0.6,0.8)--(-0.4,-0.2)--(-0.8,0)--(-0.6,-0.8)--(0.4,0.2)--(0.8,0)--cycle;
\draw (0.8,0) node[below] {$x_0$} -- (0.6,0.8) node[above right] {$\tau^+(t^+)$};
\draw (0.6,0.8) -- (-0.4,-0.2) -- (-0.8,0) node[above] {$x_1$};
\draw (0.8,0) -- (0.4,0.2) -- (-0.6,-0.8) node[below left] {$\tau^-(\varphi(t^-))$};
\draw (-0.6,-0.8)--(-0.8,0);
\draw[dashed] (0.6,0.8)--(-0.6,-0.8);
\draw (0,0) circle (1);
\end{tikzpicture}}
\subcaptionbox{}{\begin{tikzpicture}[scale=2.5]
\fill[lightgray] (0,0) circle (1);
\fill[white] (0.6,0.8)--(-0.5,-0.5)--(-0.8,0)--(-0.6,-0.8)--(0.5,0.5)--(0.8,0)--cycle;
\draw (0.8,0) node[below] {$x_0$} -- (0.6,0.8) node[above right] {$\tau^+(t^+)$};
\draw (0.6,0.8) -- (-0.5,-0.5) -- (-0.8,0) node[above] {$x_1$};
\draw (0.8,0) -- (0.5,0.5) -- (-0.6,-0.8) node[below left] {$\tau^-(\varphi(t^-))$};
\draw (-0.6,-0.8)--(-0.8,0);
\draw[dashed] (0.6,0.8)--(-0.6,-0.8);
\draw (0,0) circle (1);
\end{tikzpicture}}
\subcaptionbox{\label{lm:sew-fig4}}{\begin{tikzpicture}[scale=2.5]
\fill[lightgray] (0,0) circle (1);
\draw (0,0) circle (1);
\node[above right] at (0.6,0.8) {$\tau^+(t^+)$};
\node[below left] at (-0.6,-0.8) {$\tau^-(\varphi(t^-))$};
\clip (0,0) circle (1);
\draw[double] (0.8,0) node[below] {$x_0$} -- (0.6,0.8) -- (-0.6,-0.8) -- (-0.8,0) node[above] {$x_1$};
\end{tikzpicture}}
\caption{ }
\label{lm:sew-fig}
\end{figure}

After $\varphi _{1}$ is defined on $\partial D$, let $\varphi _{0}=\varphi
_{1}$ on $\partial D$, and extend $\varphi _{0}$ to a continuous mapping on $%
\overline{D}$ which restricted to $D$ is a homeomorphism onto $\overline{%
\mathbb{C}}\setminus \tau ^{\ast }$. Then $\varphi _{0}$ satisfies our
conclusion.

Further, choose the neighborhood $V$ of $(\partial \Delta )\setminus
(\partial D)$ such that $V$ has no intersection with a neighborhood of $%
\overline{D}\setminus ((\partial D)\cap (\partial \Delta ))$ in $\overline{%
\Delta }$. Then extend $\varphi _{1}$ to a continuous mapping on $\overline{%
\Delta \setminus D}$ which restricted to $\Delta \setminus D$ is a
homeomorphism onto $\Delta \setminus \tau ^{\ast }$, and restricted to $V$
is identical. Similarly $\varphi _{1}$ satisfies our conclusion.
\end{proof}

When $\tau^+(t)$ and $\tau^-(\varphi(t))$ are contained in $\partial\Delta$
simultaneously for some $t\in(0,1)$, this lemma can not be used. In this
case, the quotient space of $\overline{\Delta\setminus D}$ obtained by
identifying $\tau^+(t)$ and $\tau^-(\varphi(t))$ for $t\in(0,1)$ is
topologically not one disk anymore.

\begin{remark}
Let $\Sigma $ be a surface, and $\Sigma _{0}$, $\Sigma _{1}$ be surfaces
obtained by Lemma \ref{lm:sew}. It is obviously that
\begin{equation*}
\deg _{\min }(\Sigma )=\deg _{\min }(\Sigma _{0})+\deg _{\min }(\Sigma _{1})%
\mathrm{\ and\ }\deg _{\min }(\Sigma _{1})<\deg _{\min }(\Sigma ).
\end{equation*}
\end{remark}

\section{Reducing surfaces and proof of the main theorem}

Now we will complete the proof of our main result (Theorem \ref{thr:main}).
It is $\deg_{\max}(f)$ that this theorem refers, but actually:

\begin{lemma}
\label{lm:arg} We have $\deg_{\max}(\Sigma)-\deg_{\min}(\Sigma)\le m$ for
any $\Sigma\in\mathcal{F}_r(L,m)$.
\end{lemma}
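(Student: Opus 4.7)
The strategy is to choose regular values $y_-$ and $y_+$ on $S$ attaining $\deg_{\min}(\Sigma)$ and $\deg_{\max}(\Sigma)$ respectively, connect them by a suitably generic great circle arc $\gamma$, and read off the bound from the signed intersection count of $\gamma$ with the $m$ convex arcs $c_1,\dots,c_m$ that make up $\partial\Sigma$.

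The first ingredient is the standard variation-of-degree formula. Since $\Sigma\in\mathcal{F}_r(L,m)$, all branch values of $f$ lie in the finite set $E_q$, so by genericity $\gamma$ can be arranged to avoid all branch values, the corner points $p_j$, and the finite set of pairwise intersections of the supporting circles $C_j$ (the circle of $S$ containing $c_j$), and to meet every $C_j$ transversally without lying inside any of them. At each transverse crossing of $\gamma$ with the interior of some $c_j$, condition (2) of Definition~\ref{def:FLM} (that $f$ is an OPH near $\alpha_j^\circ$) forces $n(y)=\#f^{-1}(y)$ to jump by $\pm 1$, with $+1$ into the side of $c_j$ containing the image of a one-sided interior neighborhood of $\alpha_j$ in $\Delta$. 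Summing the signed jumps along $\gamma$ yields
\[
\deg_{\max}(\Sigma)-\deg_{\min}(\Sigma)=\sum_{j=1}^{m}I(\gamma,c_j),
\]
where $I(\gamma,c_j)$ denotes the signed transverse intersection number of $\gamma$ with $c_j$.

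The geometric heart of the argument is the estimate $|I(\gamma,c_j)|\le 1$ for every $j$. Let $\overline{D_j^+}$ be the closed convex cap of $S$ bounded by $C_j$; by Remark~\ref{rm:FLMconvex}, the side of $c_j$ onto which $n$ jumps up lies locally in $\overline{D_j^+}$. Since any two distinct circles on $S$ meet in at most two points, the great circle $G\supset\gamma$ meets $C_j$ in at most two points; when it meets $C_j$ in exactly two points $P_1,P_2$, transversality forces the two sub-arcs of $G\setminus\{P_1,P_2\}$ to lie on opposite sides of $C_j$, one inside $D_j^+$ and one outside, so that $G$ enters the cap at one of $P_1,P_2$ and exits at the other, carrying opposite signed contributions to $I(G,C_j)$. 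Passing from $(G,C_j)$ to the sub-arcs $(\gamma,c_j)$ only omits some contributions, so at most one nonzero signed contribution survives, giving $|I(\gamma,c_j)|\le 1$. Substituting into the display yields $\deg_{\max}(\Sigma)-\deg_{\min}(\Sigma)\le m$.

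I expect the main obstacle to be essentially technical: one must arrange $\gamma$ to avoid branch values, corners $p_j$, pairwise intersections $C_i\cap C_j$, and tangencies, and to avoid the degenerate case where $y_-$ and $y_+$ are antipodal (in which case the great circle through them is not unique). Each of these bad configurations cuts out a finite union of proper closed subsets of $S$, while the open regions on which $n$ attains $\deg_{\min}$ and $\deg_{\max}$ have nonempty interior, so small perturbations of $y_\pm$ within those regions enforce all the genericity requirements simultaneously.
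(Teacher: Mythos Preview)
Your argument is correct and reaches the same conclusion, but the route differs from the paper's. The paper applies a M\"obius transformation sending $y_-$ and $y_+$ to $\infty$ and $0$, then invokes the argument principle directly:
\[
2\pi\bigl(\deg_{\max}(f)-\deg_{\min}(f)\bigr)=\sum_{j=1}^m\theta_j,
\]
where $\theta_j$ is the change in $\arg f$ along $\alpha_j$; since $c_j=f(\alpha_j)$ is a simple circular arc avoiding $0$ and $\infty$, one has $|\theta_j|\le 2\pi$. This is the whole proof---no genericity, no transversality bookkeeping. Your approach replaces the winding-number decomposition by a signed crossing count along a generic geodesic, and bounds each $|I(\gamma,c_j)|$ by~$1$ via the two-point intersection of distinct circles on $S$. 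The two decompositions are dual to one another (winding number versus intersection with a ray), so the underlying idea is the same; your version trades a one-line appeal to the argument principle for a more hands-on geometric picture, at the cost of the genericity arrangements you correctly identify as the main technical overhead. One minor point: the convexity of $c_j$ and the ``convex cap $\overline{D_j^+}$'' are not actually needed in your argument---what you use is only that there is a consistent ``interior'' side of $c_j$ along all of $c_j^\circ$, which already follows from condition~(2) of Definition~\ref{def:FLM}. Neither proof uses convexity in an essential way.
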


\begin{proof}
Assume $\Sigma=(f,\overline{\Delta})\in\mathcal{F}_r(L,m)$. By Sto\"{\i}%
low's theorem, $f$ is meromorphic up to a homeomorphic transformation, and
thus the argument principle applies to $f$.

Let points $y$ and $y_0$ in $S\setminus\partial\Sigma$ satisfy $%
\#f^{-1}(y)=\deg_{\min}(f)$ and $\#f^{-1}(y_0)=\deg_{\max}(f)$. Up to a
fractional linear mapping on $S$, without loss of generality, we may assume
that $y=\infty$ and $y_0=0$ in $S$. Recall the $\mathcal{F}(L,m)$-partition
of $\Sigma$ in \eqref{def:FLM-1},
\begin{equation*}
\partial\Delta=\alpha_1(\mathfrak{p}_1,\mathfrak{p}_2)+\alpha_2(\mathfrak{p}%
_2,\mathfrak{p}_3)+\dots+\alpha_m(\mathfrak{p}_m,\mathfrak{p}_1).
\end{equation*}
By the argument principle, we have
\begin{equation*}
2\pi\deg_{\max}(f)-2\pi\deg_{\min}(f)=\theta_1+\theta_2+\dots+\theta_m,
\end{equation*}
where $\theta_j$ is the change in the argument of $f$ over $\alpha_j$. Since
$f$ maps each $\alpha_j$ to a simple circular arc, $\lvert\theta_j\rvert\le2%
\pi$. Thus we get $\deg_{\max}(f)-\deg_{\min}(f)\le m$.
\end{proof}

To prove Theorem \ref{thr:main}, our aim is to find a way to decrease the
degree of surfaces. The method of sewing introduced in Lemma \ref{lm:sew}
can help us. For a surface $\Sigma=(f,\overline{\Delta})$, we hope to take
an inverse of $f$ on $S\setminus\beta$, where $\beta$ is a simple path on $S$%
, and hope that the $f$-lifts of $\beta$ may give the two paths that satisfy
the conditions of Lemma \ref{lm:sew}.

Because we need to calculate $\overline{n}(\Sigma)=\overline{n}(\Sigma,E_q)$%
, the path $\beta$ can be chosen as follows.

\begin{lemma}
For the special set $E_q$ on $S$, we can order the $q$ points of $E_q$ as $%
a_1$, $a_2$, $\dots$, $a_q$, such that
\begin{equation*}
\beta=\beta_{E_q}=\beta(a_1,a_2)+\beta(a_2,a_3)+\dots+\beta(a_{q-1},a_q)
\end{equation*}
is a simple path, where each $\beta(a_j,a_{j+1})$ is a line segment from $%
a_j $ to $a_{j+1}$.
\end{lemma}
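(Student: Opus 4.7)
The plan is to obtain the required ordering via a length-minimization argument, i.e., take a shortest Hamiltonian polygonal path on $E_q$ and show it is automatically simple (the classical ``$2$-opt'' idea from combinatorial optimization). Since there are only finitely many orderings of $E_q$, I pick $a_1,a_2,\dots,a_q$ to minimize the total spherical length $\sum_{j=1}^{q-1} L\bigl(\beta(a_j,a_{j+1})\bigr)$, where each $\beta(a_j,a_{j+1})$ is the shorter great-circle arc between its endpoints. (When some pair of points is antipodal, any choice of connecting half-great-circle works; the argument is insensitive to this choice.)

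Suppose for contradiction that the concatenated path $\beta=\beta(a_1,a_2)+\dots+\beta(a_{q-1},a_q)$ is not simple. Then two non-adjacent arcs $\beta(a_i,a_{i+1})$ and $\beta(a_j,a_{j+1})$ with $i+1<j$ must meet at some interior point $x$. I then apply the $2$-opt swap: reverse the block $a_{i+1},\dots,a_j$ in the ordering to obtain
\[
a_1,\dots,a_i,\,a_j,a_{j-1},\dots,a_{i+1},\,a_{j+1},\dots,a_q,
\]
which differs from the original only in that the two offending arcs are replaced by $\beta(a_i,a_j)$ and $\beta(a_{i+1},a_{j+1})$. Applying the spherical triangle inequality to the triangles with apex at $x$ yields
\[
L\bigl(\beta(a_i,a_j)\bigr)+L\bigl(\beta(a_{i+1},a_{j+1})\bigr)\le L\bigl(\beta(a_i,a_{i+1})\bigr)+L\bigl(\beta(a_j,a_{j+1})\bigr),
\]
and I would verify that strict inequality holds, contradicting minimality of the chosen ordering.

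The main obstacle lies in verifying strict inequality uniformly. Equality in the spherical triangle inequality forces the relevant triples of points to be collinear on a common great circle through $x$. This degenerate situation also captures the ``adjacent overlap'' case, where $\beta(a_i,a_{i+1})$ and $\beta(a_{i+1},a_{i+2})$ coincide on more than just $a_{i+1}$, forcing $a_i,a_{i+1},a_{i+2}$ to lie on a single great circle. I would handle these degeneracies by a separate argument: if a subset of $E_q$ lies on a common great circle $C$, I order that subset along $C$; the corresponding concatenation becomes an arc of $C$ and is automatically simple, while also being the unique minimizer of total length for that subset. After finitely many such local cleanups, the length-minimizing ordering produces a simple path, proving the existence of the desired $\beta_{E_q}$.
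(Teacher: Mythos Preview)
Your route via a shortest Hamiltonian path and 2-opt swaps is quite different from what the paper does. The paper fixes an axis $b,b^*\in S\setminus E_q$ so that each great circle through $b,b^*$ meets $E_q$ at most once, and then orders $a_1,\dots,a_q$ by longitude with respect to this axis; since along any great circle not through $b$ the longitude is strictly monotone, the geodesic arc joining $a_j$ to $a_{j+1}$ stays in the closed lune between their two meridians, so consecutive arcs meet only at the shared endpoint and non-consecutive arcs are disjoint. No optimization and no case analysis are needed.

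The part of your argument that is not yet a proof is precisely the degenerate cleanup. Your proposed fix ``if a subset of $E_q$ lies on a common great circle $C$, order that subset along $C$'' does not work as stated: the points of $E_q$ lying on $C$ need not occur contiguously in a length-minimizing ordering, so you cannot reorder that block without changing the two bridge segments joining it to the rest of the path, and those bridges can lengthen and destroy global minimality. The adjacent-overlap case has the same defect: transposing $a_{i+1}$ and $a_{i+2}$ also replaces the edge to $a_{i+3}$, which may get longer. Your remark that the antipodal case is ``insensitive to this choice'' is likewise too quick: if $a_i$ and $a_j$ happen to be antipodal then $d(a_i,a_j)=\pi=d(a_i,x)+d(x,a_j)$ for \emph{every} $x$, so one of the two triangle inequalities in the 2-opt step is automatically an equality and strictness fails. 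These obstructions are not fatal---one can, for example, minimize lexicographically first length and then the number of intersecting pairs, or pass to a generic perturbation---but none of these repairs is actually carried out in your write-up, and each costs real work that the paper's sweep argument simply sidesteps.
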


\begin{proof}
Let $b$ and $b^{\ast}$ be fixed antipodal points on $S\setminus E_q$, such
that any great circle passing through $b$ and $b^{\ast}$ contains at most
one point of $E_q$. Consider a line segment $l$ on $S$ from $b$ to $b^{\ast}$%
. Let $l$ fix the endpoints and rotate continuously on $S$. Assume that it
encounters $a_1$, $a_2$, $\dots$, $a_q$ in $E_q$ successively. In this way,
we can take $\beta=\beta(a_1,a_2)+\beta(a_2,a_3)+\dots+\beta(a_{q-1},a_q)$
as the path consisting of $q-1$ line segments which satisfies requirements.
\end{proof}

Next, our task is to find the inverse of $f$ on $S\setminus\beta$ for a
surface $\Sigma=(f,\overline{\Delta})$. This can be achieved locally and so
that the standard concept of triangulated surfaces is useful in our study.

\begin{definition}
A \emph{topological triangle} $F$ on $S$ is a closed Jordan domain with
three distinguished points on its boundary, which are called \emph{vertices}
of $F$. The vertices divide $\partial F$ into three edges, each of which is
called an \emph{edge} of $F$.

Let $\mathcal{T}$ be a collection of topological triangles. The vertices and
the edges of the topological triangles of $\mathcal{T}$ are called vertices
and edges of $\mathcal{T}$, and the topological triangles of $\mathcal{T}$
are called \emph{faces} of $\mathcal{T}$.

Let $U\subset S$ be a domain. A finite collection $\mathcal{T}$ of
topological triangles in $\overline{U}$ is called a \emph{triangulation} of $%
\overline{U}$ if the following hold:

(1) $\overline{U}=\cup _{F\in \mathcal{T}}F$;

(2) for any two distinct edges $e_{1}$ and $e_{2}$ of $\mathcal{T}$, $%
e_{1}\cap e_{2}$ is empty or is a singleton which is the common endpoint of
them;

(3) for any two distinct faces $F_{1}$ and $F_{2}$ of $\mathcal{T}$, $%
F_{1}\cap F_{2}$ is empty, or is a singleton which is a common vertex of $%
F_{1}$ and $F_{2}$, or is a common edge of $F_{1}$ and $F_{2}$.
\end{definition}

\begin{lemma}
\label{lm:trian} Let $k$ be a natural number. Then there exists a constant $%
C_{0}(k)$ which depends only on $k$, such that for any $k$ simple circular
arcs $c_{1}(x_{1},y_{1})$, $c_{2}(x_{2},y_{2})$, $\dots $, $%
c_{k}(x_{k},y_{k})$ on $S$, there exists a triangulation $\mathcal{T}$ of $S$
satisfying the following conditions:

(1) for each simple circular arc $c_{j}(x_{j},y_{j})$, its initial point $%
x_{j}$ and terminal point $y_{j}$ are vertices of $\mathcal{T}$;

(2) for each $j=1,2,\dots ,k$, $c_{j}$ is the union of some edges of $%
\mathcal{T}$;

(3) the number of faces of $\mathcal{T}$ is not greater than $C_{0}(k)$.
\end{lemma}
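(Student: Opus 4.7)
The plan is to build $\mathcal{T}$ in three stages: form the arrangement of the $k$ arcs on $S$, enlarge it to a connected embedded graph all of whose complementary regions are topological disks, and then fan-triangulate each region. The starting observation is that any two distinct simple circular arcs on $S$ meet in at most two points, since any two distinct circles on $S$ meet in at most two points (Lemma stated earlier in the paper). Let $V_0$ consist of the $2k$ endpoints $x_j,y_j$ together with all pairwise intersections $c_i\cap c_j$ for $i\neq j$; then $|V_0|\le 2k+2\binom{k}{2}=k^2+k$. Cutting each $c_j$ at the points of $V_0$ lying on it produces at most $2k-1$ sub-arcs per arc, so the resulting embedded graph $G_0$ in $S$ has at most $k(2k-1)$ edges. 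Before proceeding I would dispose of the degenerate possibility that two of the $c_j$'s share a sub-arc of positive length: this forces them to lie on a common great or small circle, and one may pass to a smaller family of pairwise non-overlapping arcs whose union is still $c_1\cup\cdots\cup c_k$, without affecting conditions (1) and (2).

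If $G_0$ is disconnected I add at most $k-1$ auxiliary ``bridge'' arcs, each a short line segment from a vertex of one component to a vertex of another, perturbed generically so that its interior meets neither the existing arcs nor the previously placed bridges. After these additions the resulting graph $G_1$ is connected. By the standard consequence of the Jordan curve theorem for connected graphs embedded in $S$, every face of $G_1$ is a topological disk whose boundary is a closed walk of edges of $G_1$.

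In each face $F$ of $G_1$ I then place one interior vertex $v_F$ and join it to every vertex on $\partial F$ by a disjoint arc in $F$, producing one triangle per boundary edge of $F$. Faces whose boundary contains fewer than three distinct vertices (a loop at a single vertex, or a digon) are first refined by inserting one or two extra vertices subdividing boundary edges, so that every face then has $\ge 3$ boundary vertices. The triangulation axioms are preserved because each new edge lies in the interior of a single face. Conditions (1) and (2) are preserved by construction. Writing $V_1,E_1,F_1$ for the vertex, edge and face counts of $G_1$, Euler's formula $V_1-E_1+F_1=2$ together with the polynomial bounds above gives $F_1\le C_1(k)$; the number of triangles in $\mathcal{T}$ is bounded by $\sum_F \deg F\le 2E_1$, which is again polynomial in $k$, producing the required $C_0(k)$.

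The main obstacle is the bookkeeping for degenerate configurations: arcs coinciding on sub-arcs, faces bounded by a loop or a digon, and the need to draw bridges and fan-edges disjoint from previously placed structure. Each of these is a local issue and introduces only $O(1)$ new combinatorial data per occurrence, with the total number of occurrences itself polynomial in $k$, so none of them spoils the final face bound.
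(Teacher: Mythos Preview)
Your argument is correct and follows essentially the same route as the paper's own proof: form the arrangement of the $k$ arcs (with the observation that two distinct circles meet in at most two points), make the union connected by adding a bounded number of auxiliary arcs, observe that each complementary region is a topological disk, and fan-triangulate each region from an interior vertex after ensuring at least three boundary vertices. Your write-up is in fact slightly more explicit than the paper's in tracking the constants (via Euler's formula and $\sum_F \deg F \le 2E_1$) and in flagging the degenerate case of two arcs lying on a common circle and overlapping on a sub-arc, which the paper passes over silently.
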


\begin{proof}
Let $E=c_1\cup c_2\cup\dots\cup c_k$. Without losing generality, we can
assume that $E$ is connected, for otherwise we can add at most $k$ simple
circular arcs to $E$. It is clear that we have a collection of edges
\begin{equation*}
\mathcal{E}=\{e_j\}_{j=1}^{k_1},
\end{equation*}
such that $E=e_1\cup e_2\cup\dots\cup e_{k_1}$, where each $e_j$ is a simple
circular arc which is a arc of some $c_{j^{\prime }}$, $1\le j^{\prime }\le
k $. Moreover, for any two distinct $e_{j_1}$ and $e_{j_2}$, $e_{j_1}\cap
e_{j_2}$ is empty or is a singleton which is the common endpoint of them.
Noticing that any two distinct circles on $S$ have at most two common
points, we can assume that $k_1\le C_1(k)$ for some constant $C_1(k)$ which
depends only on $k$.

Let $\mathcal{V}$ be the set of all endpoints of edges in $\mathcal{E}$.
Since $E$ is connected, we can assume that
\begin{equation*}
S\setminus E=U_1\cup U_2\cup\dots\cup U_{k_2},
\end{equation*}
where $U_1$, $U_2$, $\dots$, $U_{k_2}$ are all connected components of $%
S\setminus E$, and $k_2\le C_2(k)$ for some constant $C_2(k)$ which depends
only on $k$. In fact we have $C_2(k)\le2C_1(k)$, since each edge of $%
\mathcal{E}$ is shared by at most two components of $S\setminus E$. Then
each $U_j$ is a simply connected domain with boundary consisting of at most $%
2k_1$ edges in $\mathcal{E}$, and without losing generality, we can assume
that the number of these edges is at least three for each $U_j$, for
otherwise we can replace the edges in $\mathcal{E}$ by some arcs. Each $U_j$
is conformally equivalent to $\Delta$, and so $\overline{U_j}$ has a
triangulation $\mathcal{T}_j$, whose vertex set is $(\mathcal{V}\cap\partial
U_j)\cup\{v_j\}$, where $v_j\in U_j$. Then $\mathcal{T}=\cup_{j=1}^{k_2}%
\mathcal{T}_j$ is the desired triangulation with at most $C_0(k)\le
2C_1(k)C_2(k)$ faces.
\end{proof}

Now, we show the following.

\begin{lemma}
\label{lm:triansur} Let $\Sigma=(f,\overline{\Delta})\in\mathcal{F}_r(L,m)$
with $\mathcal{F}(L,m)$-partitions \eqref{def:FLM-1} and \eqref{def:FLM-2}.
Then there exists a triangulation $\mathcal{T}$ of $S$ such that:

(1) every point of $E_{q}\cup \{p_{j}\}_{j=1}^{m}$ is a vertex of $\mathcal{T%
}$, where $\{p_{j}\}_{j=1}^{m}$ is the set of points appeared in %
\eqref{def:FLM-2};

(2) for each $j=1,2,\dots ,m$, $c_{j}$ in \eqref{def:FLM-2} is a union of
some edges of $\mathcal{T}$, and so is the arc $\beta (a_{j},a_{j+1})$ of $%
\beta $ for each $j=1,2,\dots ,q-1$;

(3) all faces of $\mathcal{T}$ can be arranged as $F_{1}$, $F_{2}$, $\dots $%
, $F_{k_{0}}$, where $k_{0}\leq C_{0}(m+q)$ for the constant $C_{0}(m+q)$
appeared in Lemma \ref{lm:trian}, such that for each $j$,
\begin{equation*}
F_{j}^{\ast }=(F_{1}\cup F_{2}\cup \dots \cup F_{j})^{\circ }\setminus \beta
\end{equation*}%
is a simply connected domain, and
\begin{equation*}
F_{1}^{\ast }\subset F_{2}^{\ast }\subset \dots \subset F_{k_{0}}^{\ast
}=S\setminus \beta .
\end{equation*}
\end{lemma}

For such a triangulation $\mathcal{T}$, by the definition of triangulation
it is easy to see $(F_j)^{\circ}\cap(f(\partial\Delta)\cup\beta)=\varnothing$
for each face $F_j$.

\begin{proof}
Let $\mathcal{T}$ be the triangulation obtained by Lemma \ref{lm:trian}
applied to $c_{1}(p_{1},p_{2})$, $c_{2}(p_{2},p_{3})$, $\dots $, $%
c_{m}(p_{m},p_{1})$, $\beta (a_{1},a_{2})$, $\beta (a_{2},a_{3})$, $\dots $,
$\beta (a_{q-1},a_{q})$. It is clear that we only need to prove the
existence of the arrangement in (3).

Since $\beta$ is a simple polygonal path and $S\setminus\beta$ is a simply
connected domain, by Riemann mapping theorem, there exists a continuous
mapping $\psi:\overline{\Delta}\to S$, which is conformal from $\Delta$ onto
$S\setminus\beta$, and $\partial\Delta$ has a partition $\partial\Delta=%
\tau^+-\tau^-$, such that $\psi$ maps both $\tau^+$ and $\tau^-$
homeomorphically onto $\beta$. Then we can get the induced triangulation of $%
\overline{\Delta}$ by the pullback of $\psi$, denoted by $\psi^{\ast}(%
\mathcal{T})$, which consists of every face $F$ such that $\psi(F)$ is a
face of $\mathcal{T}$. Because $\psi$ is conformal on $\Delta$, there is a
one-to-one correspondence between the faces of $\mathcal{T}$ and $%
\psi^{\ast}(\mathcal{T})$ by $\psi$.

It is clear that the last face $F_{k_{0}}^{\prime }$ of $\psi ^{\ast }(%
\mathcal{T})$ can be chosen so that $\Delta \setminus F_{k_{0}}^{\prime }$
is also a Jordan domain, since each face $F$ of $\mathcal{T}$ is either
contained in $S\setminus \beta $, or intersects $\beta $ on an edge or at a
vertex of $\mathcal{T}$. For the same reason, the order of faces of $\psi
^{\ast }(\mathcal{T})$ can be so arranged as $F_{1}^{\prime }$, $%
F_{2}^{\prime }$, $\dots $, $F_{k_{0}}^{\prime }$, such that $\Delta $, $%
\Delta \setminus F_{k_{0}}^{\prime }$, $\Delta \setminus
(F_{k_{0}-1}^{\prime }\cup F_{k_{0}}^{\prime })$, $\dots $, $\Delta
\setminus (F_{2}^{\prime }\cup \dots \cup F_{k_{0}}^{\prime })$ are all
Jordan domains. Then for each $k\leq k_{0}$, $(\cup _{j=1}^{k}F_{j}^{\prime
})^{\circ }$ is a Jordan domain. Then $F_{j}=\psi (F_{j}^{\prime })$ and $%
F_{j}^{\ast }=\psi ((F_{1}^{\prime }\cup F_{2}^{\prime }\cup \dots \cup
F_{j}^{\prime })^{\circ })$ satisfy (3).
\end{proof}

For a surface $(f,\overline{\Delta})$ and such a triangulation $\mathcal{T}$
of $S$, we can consider the triangulation of $\overline{\Delta}$ induced by
the pullback of $f$. So we can prove:

\begin{lemma}
\label{lm:GF} Let $\Sigma=(f,\overline{\Delta})\in\mathcal{F}_r(L,m)$, $%
\mathcal{G}_{\infty}(f)$ be the set consisting of all univalent branches of $%
f^{-1}$ defined on $S\setminus\beta$, such that for each $g\in\mathcal{G}%
_{\infty}(f)$,
\begin{equation*}
\#(\overline{g(S\setminus\beta)}\cap\partial\Delta)<+\infty.
\end{equation*}
Then we have
\begin{equation*}
\#\mathcal{G}_{\infty}(f)\ge \deg_{\min}(f)-d_{\infty},
\end{equation*}
where $d_{\infty}$ is a constant depends only on $m$ and $q$.
\end{lemma}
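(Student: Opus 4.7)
The plan is to lift the triangulation $\mathcal{T}$ from Lemma \ref{lm:triansur} through $f$ and then count univalent branches by a face-by-face analytic continuation. Since $\Sigma\in\mathcal{F}_r(L,m)$, every branch value of $f$ lies in $E_q$, and by Lemma \ref{lm:triansur} each point of $E_q$ is already a vertex of $\mathcal{T}$; moreover $f(\partial\Delta)\subset\bigcup_j c_j$ is a union of edges of $\mathcal{T}$. Consequently, the pullback of $\mathcal{T}$ by $f$ is a genuine triangulation $T$ of $\overline{\Delta}$ in which each face maps homeomorphically onto a face of $\mathcal{T}$, the open interior of every face of $T$ lies inside $\Delta$, and every face $F\in\mathcal{T}$ has $n(F)\ge\deg_{\min}(f)$ lifts in $T$.

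Call a face of $T$ a \emph{boundary face} if it contains an edge of $T$ lying in $\partial\Delta$. Each boundary edge of $T$ bounds a unique face of $T$ on the $\overline{\Delta}$-side, and the boundary edges of $T$ project into the at most $3C_0(m+q)/2$ edges of $\mathcal{T}$ that sit inside $\bigcup_j c_j$; since $f|_{\alpha_j}$ is injective, each such edge of $\mathcal{T}$ has at most $m$ lifts on $\partial\Delta$. Hence the number $|B|$ of boundary faces of $T$ is bounded above by an explicit constant $d_{\infty}=d_{\infty}(m,q)$ depending only on $m$ and $q$.

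Now I fix a regular value $y_0\in F_1^{\circ}$ with $\#f^{-1}(y_0)=d:=\deg_{\min}(f)$ and label $f^{-1}(y_0)=\{x_1,\dots,x_d\}$, so each $x_i$ lies in the interior of a unique face $\widetilde F_{i,1}$ of $T$ above $F_1$. For each $i$ I attempt to build a univalent branch $g_i$ of $f^{-1}$ on $S\setminus\beta=F_{k_0}^{\ast}$ with $g_i(y_0)=x_i$, inductively along $F_1^{\ast}\subset F_2^{\ast}\subset\dots\subset F_{k_0}^{\ast}$. Having defined $g_i$ on the simply connected $F_j^{\ast}$ so that each $F_{j'}$ ($j'\le j$) lifts to a face $\widetilde F_{i,j'}$ of $T$, I consider any attaching edge $e\subset F_{j+1}\cap F_{j'}$ with $e\not\subset\beta$. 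The continuous extension of $g_i$ across $e$ demands that the lifted edge $g_i(e)$, which is necessarily the edge of $\widetilde F_{i,j'}$ above $e$, be interior to $\overline{\Delta}$; if so, the face of $T$ adjacent to $\widetilde F_{i,j'}$ across this lifted edge is forced to be $\widetilde F_{i,j+1}$, and by uniqueness of analytic continuation in the simply connected, branch-value-free $F_{j+1}^{\ast}$ different attaching edges produce the same $\widetilde F_{i,j+1}$. Conversely the extension fails exactly when some required attaching lift sits on $\partial\Delta$, in which case the face $\widetilde F_{i,j'}$ containing that lift is itself a boundary face already present in the trajectory.

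Two observations close the argument. First, a trajectory $\{\widetilde F_{i,j}\}_{j=1}^{k_0}$ that completes and avoids every boundary face produces some $g_i\in\mathcal{G}_{\infty}(f)$, because $\overline{g_i(S\setminus\beta)}=\bigcup_j\widetilde F_{i,j}$ then meets $\partial\Delta$ only at finitely many vertices of $T$; any trajectory that either fails at some step or completes through a boundary face yields either no $g_i$ or a $g_i$ whose closure contains an arc of $\partial\Delta$ and so lies outside $\mathcal{G}_{\infty}(f)$. Second, uniqueness of analytic continuation from $y_0$ shows that trajectories for distinct $x_i$ are pairwise disjoint: an equality $\widetilde F_{i,j}=\widetilde F_{i',j}$ at any $j$ traces back to $\widetilde F_{i,1}=\widetilde F_{i',1}$, forcing $x_i=x_{i'}$. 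Combining, at most $|B|\le d_{\infty}$ of the $d$ trajectories can meet boundary faces, so at least $d-d_{\infty}$ remain, each producing a distinct element of $\mathcal{G}_{\infty}(f)$. The hardest step, I expect, is the extension analysis: precisely pinning down through uniqueness of analytic continuation that every failure or undesired $\partial\Delta$-contact is chargeable to exactly one boundary face already on the trajectory, so that the disjointness-of-trajectories count is clean; the estimate $|B|\le d_{\infty}(m,q)$ itself is a routine combinatorial accounting once Lemma \ref{lm:triansur} is in hand.
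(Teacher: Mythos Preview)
Your proposal is correct and follows essentially the same approach as the paper: pull back the triangulation of Lemma~\ref{lm:triansur} through $f$, bound the number of boundary edges (hence boundary faces) of the lifted triangulation by a constant depending only on $m$ and $q$, and then extend univalent branches face by face along the chain $F_1^{\ast}\subset\dots\subset F_{k_0}^{\ast}$, charging each failure to a boundary edge/face. The only cosmetic difference is bookkeeping: the paper tracks, at each step $j\to j+1$, the set $\mathcal{E}_j$ of boundary edges whose image lies in $(\partial F_{j+1})\setminus\Gamma_j^{\circ}$ and uses that these sets are disjoint, whereas you phrase the same count via disjointness of the ``trajectories'' $\{\widetilde F_{i,j}\}$ and the total number of boundary faces.
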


\begin{proof}
Let $\mathcal{T}$ be the triangulation of $S$ defined in Lemma \ref%
{lm:triansur}. Each branch value of $f$ is a point in $E_q$ and thus is a
vertex of $\mathcal{T}$. We can get the induced triangulation of $\overline{%
\Delta}$ by the pullback of $f$, denoted by $f^{\ast}(\mathcal{T})$. The
vertices, edges and faces of $f^{\ast}(\mathcal{T})$ are mapped
homeomorphically onto vertices, edges and faces of $\mathcal{T}$
respectively. Then $\partial\Delta$ has a partition
\begin{equation*}
\partial\Delta=\gamma_1+\gamma_2+\dots+\gamma_{d_{\infty}},
\end{equation*}
where each $\gamma_j$ is an edge of $f^{\ast}(\mathcal{T})$. Because $%
f(\partial\Delta)$ consists of $m$ simple circular arcs, each point of $%
f(\partial\Delta)$ has at most $m$ inverses on $\partial\Delta$. Thus
\begin{equation*}
d_{\infty}\le3mC_0(m+q),
\end{equation*}
where $C_0(m+q)$ is the constant mentioned in Lemma \ref{lm:triansur}.

It is clear that $f^{-1}(F_1)$ consists of at least $\deg_{\min}(f)$ faces
of $f^{\ast}(\mathcal{T})$. There are also at least $\deg_{\min}(f)$
distinct univalent branches of $f^{-1}$ defined on $F_1=\overline{F_1^{\ast}}
$. Let $G_1$ be the set of all these univalent branches of $f^{-1}$ defined
on $\overline{F_1^{\ast}}\setminus\beta$, such that each $g\in G_1$
satisfies $\#(\overline{g(F_1^{\ast})}\cap\partial\Delta)<\infty$.

For each univalent branch $g$ of $f^{-1}$ defined on $\overline{F_1^{\ast}}%
\setminus\beta$, $g\notin G_1$ can only happen in the case that one edge of $%
\overline{g(F_1^{\ast})}$ is contained in $\partial\Delta$, and there is
only one face in $f^{\ast}(\mathcal{T})$ containing this edge. Then we have
\begin{equation*}
\#G_1\ge \deg_{\min}(f)-\#\mathcal{E}_1,
\end{equation*}
where $\mathcal{E}_1$ is the subset of $\{\gamma_j\}_{j=1}^{d_{\infty}}$,
consisting of such $\gamma_j$ with $f(\gamma_j)\subset\partial F_1$.

Generally, let $G_j$ be the set of all univalent branches of $f^{-1}$
defined on $\overline{F_j^{\ast}}\setminus\beta$, such that each $g\in G_j$
satisfies $\#(\overline{g(F_j^{\ast})}\cap\partial\Delta)<\infty$. By this
definition, for each fixed $j<k_0$ and each $g\in G_j$, $g$ can be
definitely extended to $\overline{F_{j+1}^{\ast}}\setminus\beta$ across $%
\Gamma_j$, where $\Gamma_j$ is not contained in $\beta$ and is the common
edge of $\overline{F_j^{\ast}}$ and $F_{j+1}$, which is a simple arc
consisting of one or two edges of $\mathcal{T}$. In fact $f$ has no branch
value on $S\setminus\beta\supset\Gamma_j\setminus\beta\supset\Gamma_j^{%
\circ} $ and $g(\Gamma_j^{\circ})\subset\Delta$, which implies the existence
of extension.

For this extension of $g$ defined on $\overline{F_{j+1}^{\ast}}%
\setminus\beta $, it will certainly belong to $G_{j+1}$ when there is no
edge of $F_{j+1}$ mapped into $\partial\Delta$ by $g$. So we have
\begin{equation*}
\#G_{j+1}\ge\#G_j-\#\mathcal{E}_j,
\end{equation*}
where $\mathcal{E}_j\subset\{\gamma_j\}_{j=1}^{d_{\infty}}$ consists of $%
\gamma_j$ with $f(\gamma_j)\subset(\partial
F_{j+1})\setminus\Gamma_j^{\circ} $.

Inductively, we get $\mathcal{G}_{\infty}(f)=G_{k_0}$ with
\begin{equation*}
\#G_{k_0}\ge \deg_{\min}(f)-\#\mathcal{E}_1-\#\mathcal{E}_2-\dots-\#\mathcal{%
E}_{d_{\infty}-1}.
\end{equation*}
Because $\partial F_1$, $(\partial F_2)\setminus\Gamma_1^{\circ}$, $\dots$, $%
(\partial F_{k_0})\setminus\Gamma_{k_0-1}^{\circ}$ consist of some distinct
edges of $\mathcal{T}$, we know
\begin{equation*}
\#\mathcal{E}_1+\#\mathcal{E}_2+\dots+\#\mathcal{E}_{d_{\infty}-1}\le
d_{\infty},
\end{equation*}
which proves the conclusion.
\end{proof}

We obtain the lower bound of the number of elements in $\mathcal{G}%
_{\infty}(f)$, and now need to have a basic discussion on the properties of
these elements.

\begin{lemma}
\label{lm:GFin} Let $\Sigma =(f,\overline{\Delta })\in \mathcal{F}_{r}(L,m)$%
, $\mathcal{G}_{\infty }(f)$ be the set given by Lemma \ref{lm:GF}. If there
are distinct $g_{1}$ and $g_{2}$ in $\mathcal{G}_{\infty }(f)$, then $%
g_{1}(S\setminus \beta )\cap g_{2}(S\setminus \beta )=\varnothing $.
\end{lemma}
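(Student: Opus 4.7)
The plan is a contradiction argument based on the coincidence set of $g_1$ and $g_2$. Suppose for contradiction that there exist distinct $g_1,g_2\in\mathcal{G}_\infty(f)$ and a point $w_0\in g_1(S\setminus\beta)\cap g_2(S\setminus\beta)$. Writing $z_0=f(w_0)\in S\setminus\beta$, the identity $f\circ g_i=\mathrm{id}$ forces $g_i(z_0)=w_0$ for $i=1,2$. I would introduce the coincidence set $A=\{z\in S\setminus\beta : g_1(z)=g_2(z)\}$, which is nonempty and closed in $S\setminus\beta$ by continuity. The strategy is to show $A$ is also open, whence the connectedness of $S\setminus\beta$ yields $A=S\setminus\beta$ and hence $g_1\equiv g_2$, contradicting distinctness.

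The main preliminary step, and the main obstacle, is to show that $g(S\setminus\beta)\subset\Delta$ for every $g\in\mathcal{G}_\infty(f)$. Suppose for contradiction $g(z_*)=w_*\in\partial\Delta$ for some $z_*\in S\setminus\beta$. Since $\Sigma\in\mathcal{F}_r(L,m)$, every branch value of $f$ lies in $E_q\subset\beta$; but $f(w_*)=z_*\notin\beta$, so $w_*$ is a regular point with $v_f(w_*)=1$, forcing the Sto\"ilow exponent $\omega$ at $w_*$ to lie in $\{1,2\}$ by Lemma \ref{lm:stoilowloc}. When $\omega=1$, the local model shows that $f$ restricted to $\overline\Delta$ near $w_*$ is a homeomorphism from a half-neighborhood of $w_*$ in $\overline\Delta$ onto a half-neighborhood of $z_*$ in $S$; but $g$ is continuous on a full two-dimensional neighborhood of $z_*$ in $S\setminus\beta$, and points $z'$ in the opposite half-neighborhood (which meets $S\setminus\beta$ nontrivially since $\beta$ is one-dimensional) have no preimage near $w_*$, contradicting continuity of $g$ at $z_*$. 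When $\omega=2$, the model $\zeta\mapsto\zeta^2$ on the upper half-disk exhibits a fold along the diameter, and a direct check of the $\pm\sqrt{\cdot}$ branches shows no continuous injective section of $f$ on a full neighborhood of $z_*$ can take the value $w_*$ at $z_*$. Both sub-cases contradict the existence of $g$, proving the claim.

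Granted the claim, for each $z\in A$ the common value $w=g_1(z)=g_2(z)$ lies in $\Delta$, and the same reasoning ($f(w)=z\notin\beta$ together with the branch values being confined to $\beta$) gives $v_f(w)=1$. Hence $f$ is a local homeomorphism near $w$ with a unique continuous local inverse taking $z$ to $w$, with which both $g_1$ and $g_2$ must coincide in a neighborhood of $z$. Therefore $A$ is open, and by connectedness $A=S\setminus\beta$, i.e., $g_1\equiv g_2$, the desired contradiction. The technical core lies entirely in the boundary exclusion above: once it is secured, the rest is the standard open--closed dichotomy on the connected space $S\setminus\beta$.
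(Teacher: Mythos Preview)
Your argument is correct and shares the same core idea as the paper's proof: two sections of $f$ over the connected, branch-value-free region $S\setminus\beta$ that agree at one point must agree everywhere. The paper phrases this via path-lifting (choose a path $\gamma$ in $S\setminus\beta$ from the coincidence point to a point where $g_1\ne g_2$; then $g_1\circ\gamma$ and $g_2\circ\gamma$ are two $f$-lifts of $\gamma$ with the same initial point but different terminal points, contradicting uniqueness of lifts away from branch values), whereas you unfold the same monodromy principle as an open--closed argument on the coincidence set.

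The one substantive addition in your write-up is the preliminary claim $g(S\setminus\beta)\subset\Delta$, which you prove by analysing the local models $\omega=1$ and $\omega=2$ at boundary points. The paper's short proof does not isolate this step explicitly; it simply invokes ``$f$ has no branch value outside $E_q\subset\beta$'' to conclude. Your treatment makes precise why a boundary fold point (the $\omega=2$ case, which has $v_f=1$ and so is formally regular) cannot lie in the image of a continuous section, and hence cannot spoil uniqueness of continuation. This is a genuine clarification, though not a different strategy: once the boundary is excluded, both arguments reduce to the standard identity principle for sections of an unramified covering over a connected base.
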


\begin{proof}
Let $x$ be a points in $g_{1}(S\setminus \beta )\cap g_{2}(S\setminus \beta
) $, which implies $g_{1}(f(x))=g_{2}(f(x))$. Because $g_{1}\neq g_{2}$,
there exists $y\in S\setminus \beta $ such that $g_{1}(y)\neq g_{2}(y)$. Let
$\gamma $ be a path from $f(x)$ to $y$ with $\gamma ^{\circ }\subset
S\setminus \beta $. Then $g_{1}\circ \gamma $ and $g_{2}\circ \gamma $ give
two different $f$-lifts of $\gamma $, but $f$ has no branch value outside $%
E_{q}\subset \beta $, which implies a contradiction.
\end{proof}

\begin{lemma}
\label{lm:GFbound} Let $\Sigma=(f,\overline{\Delta})\in\mathcal{F}_r(L,m)$
and $g\in\mathcal{G}_{\infty}(f)$. Then the following hold.

(1) \label{lm:GFbound-1}$\partial g(S\setminus \beta )$ has the partition
\begin{equation*}
\partial g(S\setminus \beta )=(\tau _{1}^{+}+\tau _{2}^{+}+\dots +\tau
_{q-1}^{+})-(\tau _{1}^{-}+\tau _{2}^{-}+\dots +\tau _{q-1}^{-}),
\end{equation*}%
where $f$ restricted to $\tau ^{+}=\tau _{1}^{+}+\tau _{2}^{+}+\dots +\tau
_{q-1}^{+}$ and $\tau ^{-}=\tau _{1}^{-}+\tau _{2}^{-}+\dots +\tau
_{q-1}^{-} $ are both homeomorphisms onto $\beta $, and $f$ maps both $\tau
_{j}^{+}$ and $\tau _{j}^{-}$ onto $\beta (a_{j},a_{j+1})$ homeomorphically
for each $j $.

(2) \label{lm:GFbound-2}If $\tau ^{+}$ and $\tau ^{-}$ have a common point $%
x $, then there exists $j$ such that $x$ is a common point of $\tau _{j}^{+}$
and $\tau _{j}^{-}$. Moreover, if $x\in (\tau _{j}^{+})^{\circ }$ or $x\in
(\tau _{j}^{-})^{\circ }$, then $\tau _{j}^{+}=\tau _{j}^{-}$.

(3) \label{lm:GFbound-3}There exist arcs of $\tau ^{+}$ and $\tau ^{-}$,
respectively given by
\begin{equation*}
(\tau ^{\ast })^{+}=\tau _{j_{0}}^{+}+\tau _{j_{0}+1}^{+}+\dots +\tau
_{j_{0}+k}^{+}, (\tau ^{\ast })^{-}=\tau _{j_{0}}^{-}+\tau
_{j_{0}+1}^{-}+\dots +\tau _{j_{0}+k}^{-},
\end{equation*}%
such that $\tau ^{\ast }=(\tau ^{\ast })^{+}-(\tau ^{\ast })^{-}$ is a
Jordan curve. Moreover, $g(S\setminus \beta )$ is contained in the domain
enclosed by $\tau ^{\ast }$.
\end{lemma}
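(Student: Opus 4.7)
The plan is to exploit the fact that $S \setminus \beta$ is a simply connected slit domain whose prime-end closure is a closed topological disk with boundary tracing $\beta$ twice. Since $g$ is univalent and $f$ has no branch values outside $E_q \subset \beta$, I would first extend $g$ continuously to this prime-end closure. Its boundary images are by definition $\tau^+$ and $\tau^-$, the two ``sides'' of $\beta$; subdividing the boundary at the preimages of $a_2, \dots, a_{q-1}$ yields the partition $\tau^\pm = \tau_1^\pm + \dots + \tau_{q-1}^\pm$ asserted in part \ref{lm:GFbound-1}. The homeomorphism claim then follows because $g$ is a homeomorphism on each one-sided neighborhood of $\beta$ and each $\beta(a_j,a_{j+1})$ is a simple segment, so $f\vert_{\tau_j^\pm}$ admits a continuous inverse onto $\beta(a_j,a_{j+1})$.

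For part \ref{lm:GFbound-2}, I would track $y = f(x) \in \beta$. If $y$ lies interior to a segment $\beta(a_j,a_{j+1})$, then $x \in \tau_j^+ \cap \tau_j^-$ with matched index $j$, since this is the only piece whose $f$-image contains $y$. If $y = a_l$ is a vertex, then $x$ is an endpoint of each adjacent piece, and one may take $j = l-1$ or $j = l$. For the coincidence claim, if $x \in (\tau_j^+)^\circ$ then $f(x)$ lies interior to $\beta(a_j,a_{j+1})$, hence is a regular value; Lemma \ref{lm:stoilowloc} furnishes a neighborhood of $x$ on which $f$ is a local homeomorphism and the $f$-lift of $\beta(a_j,a_{j+1})$ through $x$ is unique, so $\tau_j^+$ and $\tau_j^-$ agree near $x$. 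I would conclude via a standard open-closed argument on the parameter set $\{t : \tau_j^+(t) = \tau_j^-(t)\}$ (after parametrizing both by their common $f$-image) that $\tau_j^+ = \tau_j^-$; the finitely many points of $\tau_j^\pm$ lying on $\partial\Delta$ are absorbed by closedness.

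For part \ref{lm:GFbound-3}, I plan to identify $\tau^\ast$ as the boundary cycle of a single maximal run of ``non-coincident'' indices. Partition $\{1,\dots,q-1\}$ into \emph{coincident} indices (where $\tau_j^+ = \tau_j^-$) and \emph{non-coincident} ones (on which, by part \ref{lm:GFbound-2}, the arcs meet only at endpoints). Choose $j_0$ and $j_0+k$ as the endpoints of a maximal non-coincident block, and set $(\tau^\ast)^\pm = \tau_{j_0}^\pm + \dots + \tau_{j_0+k}^\pm$. Because the indices just outside this block are coincident (or the block is extremal in $\{1,\dots,q-1\}$), the endpoints of $(\tau^\ast)^+$ and $(\tau^\ast)^-$ agree at the common lifts of $a_{j_0}$ and $a_{j_0+k+1}$, and by part \ref{lm:GFbound-2} their interiors are disjoint; hence $\tau^\ast$ is a Jordan curve. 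A local-sidedness argument -- using that $\tau^\ast$ is part of $\partial g(S\setminus\beta)$ and that $g(S\setminus\beta)$ is connected -- then places $g(S\setminus\beta)$ in the Jordan domain enclosed by $\tau^\ast$. The hardest step will be verifying endpoint-matching and interior-disjointness simultaneously: the open-closed argument from part \ref{lm:GFbound-2} is the chief tool for the latter, while the former relies on the correct choice of a single maximal non-coincident block rather than the whole range $\{1,\dots,q-1\}$.
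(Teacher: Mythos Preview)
Your arguments for parts \ref{lm:GFbound-1} and \ref{lm:GFbound-2} match the paper's. The gap is in part \ref{lm:GFbound-3}: selecting a maximal run of non-coincident indices does not guarantee that $\tau^\ast$ is a Jordan curve. Part \ref{lm:GFbound-2} only tells you that for a single non-coincident index $j$ the arcs $\tau_j^+$ and $\tau_j^-$ meet at most at their endpoints---it does \emph{not} prevent them from actually meeting there. Concretely, suppose both $l-1$ and $l$ are non-coincident but the terminal point of $\tau_{l-1}^+$ coincides with that of $\tau_{l-1}^-$ at some point $x$ (a lift of $a_l$). Then $x$ lies in the interior of both $(\tau^\ast)^+$ and $(\tau^\ast)^-$, and your $\tau^\ast$ self-intersects. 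This scenario is not excluded: if $x$ is a branch point of $f$ (allowed, since $f(x)=a_l\in E_q$), the branch $g$ can send the two sides of $\beta$ near $a_l$ to two \emph{non-adjacent} sectors of the local model $z\mapsto z^{v_f(x)}$, so that all four arcs $\tau_{l-1}^\pm,\tau_l^\pm$ terminate at $x$ without any pair coinciding.

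The paper avoids this by choosing the block via endpoint-matching rather than non-coincidence: for each non-coincident $j$ it takes $j'\le j$ maximal with $\tau_{j'}^\pm$ sharing their initial point and $j''\ge j$ minimal with $\tau_{j''}^\pm$ sharing their terminal point. By construction no interior endpoint-sharing occurs on $[j',j'']$, so each resulting $\tau_j^\ast$ is genuinely Jordan. There may now be several distinct such Jordan curves; the paper shows their enclosed domains $U_j$ are pairwise nested or disjoint, selects a maximal $U_{j_0}$, and then uses connectedness of $g(S\setminus\beta)$ to place it inside $U_{j_0}$. Your local-sidedness idea is the right instinct for that last step, but the block selection needs the finer endpoint criterion and the subsequent nesting analysis to go through.
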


\begin{proof}
Because $S\setminus \beta $ is conformally equivalent to $\Delta $, it is
clear that $g$ can be continuously extended to either side of $\beta $. From
this we obtain two $f$-lifts $\tau ^{+}$ and $\tau ^{-}$ of $\beta $ which
are defined by the extension of $g$ on different sides of $\beta $.
Obviously $f$ restricted to $\tau ^{+}$ and $\tau ^{-}$ are both
homeomorphisms onto $\beta $. Let $\tau _{j}^{+}$ and $\tau _{j}^{-}$ be
arcs of $\tau ^{+}$ and $\tau ^{-}$ respectively, which are homeomorphically
mapped onto $\beta (a_{j},a_{j+1})$ by $f$. Now we get the partition of $%
\partial g(S\setminus \beta )=\tau ^{+}-\tau ^{-}$, and (1) is proved.

To prove (2), let $x\in \tau ^{+}\cap \tau ^{-}$. We have $f(x)\in \beta $.
If $f(x)\in \beta (a_{j},a_{j+1})$, we know $x\in \tau _{j}^{+}\cap \tau
_{j}^{-}$. If $x\in (\tau _{j}^{+})^{\circ }\cup (\tau _{j}^{-})^{\circ }$,
say, $f(x)\in (\beta (a_{j},a_{j+1}))^{\circ }$, then $f(x)\notin E_{q}$,
and then $x$ is a regular point of $f$, and then by the uniqueness of $f$%
-lift, we must have $\tau _{j}^{+}=\tau _{j}^{-}$, and (2) is proved.

Now we prove (3). Because $g(S\setminus \beta )\subset \Delta $, $\tau
^{+}\neq \tau ^{-}$. For any $j$ with $\tau _{j}^{+}\neq \tau _{j}^{-}$, we
take $j^{\prime }\leq j$ as the maximum integer such that $\tau _{j^{\prime
}}^{+}$ and $\tau _{j^{\prime }}^{-}$ have the same initial point, and take $%
j^{\prime \prime }\geq j$ as the minimum integer such that $\tau _{j^{\prime
\prime }}^{+}$ and $\tau _{j^{\prime \prime }}^{-}$ have the same terminal
point. We denote $(\tau _{j}^{\ast })^{+}=\tau _{j^{\prime }}^{+}+\tau
_{j^{\prime }+1}^{+}+\dots +\tau _{j^{\prime \prime }}^{+}$ and $(\tau
_{j}^{\ast })^{-}=\tau _{j^{\prime }}^{-}+\tau _{j^{\prime }+1}^{-}+\dots
+\tau _{j^{\prime \prime }}^{-}$. Then by (1) and (2), $\tau _{j}^{\ast
}=(\tau _{j}^{\ast })^{+}-(\tau _{j}^{\ast })^{-}$ must be a Jordan curve.
Assume that $U_{j}\subset \Delta $ is the domain enclosed by $\tau
_{j}^{\ast }$ (when $\tau _{j}^{+}=\tau _{j}^{-}$, we can assume $%
U_{j}=\varnothing $).

For $j_{1}$ and $j_{2}$ such that $\tau _{j_{1}}^{\ast }\neq \tau
_{j_{2}}^{\ast }$, we may assume $j_{1}<j_{2}$. If $\tau _{j_{1}}^{\ast }$
and $\tau _{j_{2}}^{\ast }$ have a common point $x$, then by (1) and (2), $x$
must be the terminal point of $(\tau _{j_{1}}^{\ast })^{+}$ and $(\tau
_{j_{1}}^{\ast })^{-}$, and meanwhile $x$ is the initial point of $(\tau
_{j_{2}}^{\ast })^{+}$ and $(\tau _{j_{2}}^{\ast })^{-}$, and thus $x$ is
the unique common point of $\tau _{j_{1}}^{\ast }$ and $\tau _{j_{2}}^{\ast
} $. In fact, $f(x)$ is the unique common point of $f((\tau _{j_{1}}^{\ast
})^{+})$ and $f((\tau _{j_{2}}^{\ast })^{+})$, where both $f((\tau
_{j_{1}}^{\ast })^{+})=f((\tau _{j_{1}}^{\ast })^{-})$ and $f((\tau
_{j_{2}}^{\ast })^{+})=f((\tau _{j_{2}}^{\ast })^{-})$ are arcs of $\beta $.
Then for any $U_{j_{1}}$ and $U_{j_{2}}$, we have $U_{j_{1}}\subset
U_{j_{2}} $, or $U_{j_{1}}\supset U_{j_{2}}$, or $U_{j_{1}}\cap
U_{j_{2}}=\varnothing $.

Now let $U_{j_{0}}$ be \textquotedblleft maximal\textquotedblright , which
means that there is no $U_{j}$ such that $U_{j}\neq U_{j_{0}}$ and $%
U_{j}\supset U_{j_{0}}$. Since $g(S\setminus \beta )$ is connected, we get $%
g(S\setminus \beta )\subset U_{j_{0}}$. Thus $\tau ^{\ast }=\tau
_{j_{0}}^{\ast }$, $(\tau ^{\ast })^{+}=(\tau _{j_{0}}^{\ast })^{+}$ and $%
(\tau ^{\ast })^{-}=(\tau _{j_{0}}^{\ast })^{-}$ satisfy (3).
\end{proof}

The two curves $(\tau^{\ast})^+$ and $(\tau^{\ast})^-$ we find are as that
described in Lemma \ref{lm:sew}. If some other conditions can be also
satisfied, we can use Lemma \ref{lm:sew} to construct a new surface by
sewing, whose degree is smaller.

\begin{lemma}
\label{lm:sewdeg} Let $\Sigma=(f,\overline{\Delta})\in\mathcal{F}_r(L,m)$ be
a surface with $\mathcal{F}(L,m)$-partitions \eqref{def:FLM-1} and %
\eqref{def:FLM-2}, $\tau^+$ and $\tau^-$ be two curves satisfying all
conditions in Lemma \ref{lm:sew}, and moreover, $(\tau^+-\tau^-)\cap\partial%
\Delta\subset f^{-1}(E_q)\cap\{\mathfrak{p}_j\}_{j=1}^m$. Then there exists
a surface $\Sigma_1=(f_1,\overline{\Delta})\in\mathcal{F}_r(L,m)$, such that
\begin{align*}
\partial\Sigma_1&=\partial\Sigma, \\
H(\Sigma_1)&=H(\Sigma),
\end{align*}
and moreover,
\begin{equation*}
\deg_{\min}(f_1)<\deg_{\min}(f).
\end{equation*}
\end{lemma}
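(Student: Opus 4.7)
My plan is to apply Lemma~\ref{lm:sew} directly to the pair $\tau^+,\tau^-$, producing a closed surface $\Sigma_0=(f_0,\overline{\mathbb{C}})$ and a candidate $\Sigma_1=(f_1,\overline{\Delta})$ together with the continuous maps $\varphi_0,\varphi_1$ and the suture $\tau^{\ast}$, and then to verify each conclusion of Lemma~\ref{lm:sewdeg} for this $\Sigma_1$. The additional hypothesis $(\tau^+-\tau^-)\cap\partial\Delta\subset f^{-1}(E_q)\cap\{\mathfrak{p}_j\}_{j=1}^m$ is precisely what makes the sewing compatible with the $\mathcal{F}_r(L,m)$ structure.

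I would first verify $\partial\Sigma_1=\partial\Sigma$ and $\Sigma_1\in\mathcal{F}_r(L,m)$. Inspection of the two cases in the construction of Lemma~\ref{lm:sew} shows that $\varphi_1$ fixes $\partial\Delta$ pointwise (in case~(2) both $\tau^+(t^+)$ and $\tau^-(t^-)$ are fixed by $\varphi_1$), and that the identifications on $\partial D\cap\partial\Delta$ only glue pairs of preimages of common points of $E_q$, whence $f_1|_{\partial\Delta}=f|_{\partial\Delta}$ as parametrised curves and $\partial\Sigma_1=\partial\Sigma$. Since $\varphi_1$ is the identity on a neighbourhood $V$ of $\partial\Delta\setminus(\partial D\cap\partial\Delta)$, the $\mathcal{F}(L,m)$-partition and condition~(2) of Definition~\ref{def:FLM} transfer intact from $\Sigma$ to $\Sigma_1$, and $L(\partial\Sigma_1)=L(\partial\Sigma)$. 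Off $\tau^{\ast}$ the map $f_1$ is conjugate to $f$ via $\varphi_1^{-1}$ and so inherits branch values in $E_q$; on the interior of each edge of $\tau^{\ast}$, two local inverses of $f$ are pasted along opposite sides of an arc of $\beta$, producing a local OPH; and at the vertices of $\tau^{\ast}$ the image under $f_1$ lies in $E_q$: at an interior vertex this is forced by a kink of $\beta$ at some $a_j$, while at an endpoint $x_i\in\Delta$ the coexistence of two distinct curves $\tau^+,\tau^-$ with $f\circ\tau^+=f\circ\tau^-$ near $x_i$ forces $v_f(x_i)\ge 2$, hence $f(x_i)\in E_q$ because $\Sigma\in\mathcal{F}_r(L,m)$; for $x_i\in\partial\Delta$ the hypothesis gives $f(x_i)\in E_q$ directly. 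Thus $\Sigma_1\in\mathcal{F}_r(L,m)$.

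Next I would establish $R(\Sigma_1)=R(\Sigma)$, which combined with $L(\partial\Sigma_1)=L(\partial\Sigma)$ yields $H(\Sigma_1)=H(\Sigma)$. Area is additive:
\[
A(\Sigma)=A(f,D)+A(f,\Delta\setminus D)=A(\Sigma_0)+A(\Sigma_1).
\]
For $\overline n$, I would track each preimage of $a_j\in E_q$ across the three surfaces. Preimages in the open sets $D$ and $\Delta\setminus\overline D$ contribute equally to $\overline n(\Sigma,a_j)$ and to $\overline n(\Sigma_0,a_j)$, $\overline n(\Sigma_1,a_j)$ respectively. A pair $(\tau^+(t),\tau^-(t))$ on $\partial D\cap\Delta$ contributes $2$ to $\overline n(\Sigma,a_j)$ and $1$ each to $\overline n(\Sigma_0,a_j),\overline n(\Sigma_1,a_j)$ (balanced); a pair with one point on $\partial\Delta$ contributes $1$ to $\overline n(\Sigma,a_j)$ and $1,0$ respectively (also balanced, because $\varphi_1$ fixes the boundary point). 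The only unbalanced contributions come from the endpoints $x_0,x_1$ of $\tau^{\ast}$: each is a single point in $\partial D$ that produces one preimage in $\Sigma_0$ and one in $\Sigma_1$ yet at most one in $\Sigma$; a short case check on whether $x_i\in\Delta$ or $x_i\in\partial\Delta$ shows that each over-counts by exactly $1$, so
\[
\overline n(\Sigma_0,E_q)+\overline n(\Sigma_1,E_q)=\overline n(\Sigma,E_q)+2.
\]

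Finally, $f_0:\overline{\mathbb{C}}\to S$ is an OPCOFO map between closed spheres, hence a branched covering of some degree $d_0\ge 1$, and by the second paragraph all its branch values lie in $E_q$. Riemann--Hurwitz gives total ramification $2d_0-2$, so $\overline n(\Sigma_0,E_q)=qd_0-(2d_0-2)=(q-2)d_0+2$; combined with $A(\Sigma_0)=4\pi d_0$ this produces the topological constant
\[
R(\Sigma_0)=(q-2)\cdot 4\pi d_0-4\pi\bigl((q-2)d_0+2\bigr)=-8\pi,
\]
independently of $d_0$. Plugging the two additivity identities into the definition of $R$,
\[
R(\Sigma)=R(\Sigma_0)+R(\Sigma_1)+8\pi=-8\pi+R(\Sigma_1)+8\pi=R(\Sigma_1),
\]
which completes the verification of $H(\Sigma_1)=H(\Sigma)$. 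The strict inequality $\deg_{\min}(f_1)<\deg_{\min}(f)$ is exactly the remark following Lemma~\ref{lm:sew}. The main obstacle is the $\overline n$-bookkeeping in the third paragraph: the over-count of exactly $2$ at the endpoints of $\tau^{\ast}$ is precisely what cancels the topological $-8\pi$ in $R(\Sigma_0)$ produced by peeling off the closed sub-surface, and it is this delicate cancellation that makes the whole identity drop out cleanly.
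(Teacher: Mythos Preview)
Your proposal is correct and follows essentially the same approach as the paper: apply Lemma~\ref{lm:sew}, use that $\varphi_1$ is the identity on a neighbourhood of $\partial\Delta\setminus(\partial D\cap\partial\Delta)$ to transfer the $\mathcal{F}(L,m)$-structure and get $\partial\Sigma_1=\partial\Sigma$, compute $\overline{n}(\Sigma_0)=(q-2)d_0+2$ via Riemann--Hurwitz, and then do the $\overline{n}$-bookkeeping case by case to obtain the over-count of exactly~$2$ at the endpoints of $\tau^\ast$. One minor slip: the hypotheses of Lemma~\ref{lm:sewdeg} only require that $f\circ\tau^+$ consist of line segments, not that it lie on $\beta$, so interior vertices of $\tau^\ast$ need not have $f_1$-image in $E_q$; the paper instead treats \emph{all} interior points $y\in\tau^\ast\cap\Delta$ with $f_1(y)\notin E_q$ uniformly by the same gluing-of-local-homeomorphisms argument you give for edge interiors, and your proof should do the same.
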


\begin{proof}
Let $D\subset\Delta$ be the Jordan domain with $\partial D=\tau^+-\tau^-$.
Since $(f,\tau^+)\sim(f,\tau^-)$, which means that the $f$-lifts near the
endpoints of $\tau^+$ and $\tau^-$ are not unique, these two endpoints are
branch points of $f$ (see Remark \ref{rm:liftcount}, where $f$-lifts are
also defined).

Let $\Sigma_0=(f_0,\overline{\mathbb{C}})$ and $\Sigma_1=(f_1,\overline{%
\Delta})$ be the surfaces, and $\varphi_0:\overline{D}\to\overline{\mathbb{C}%
}$ and $\varphi_1:\overline{\Delta\setminus D}\to\overline{\Delta}$ be the
continuous mappings obtained by Lemma \ref{lm:sew}. It is obviously that $%
\deg_{\min}(f_1)<\deg_{\min}(f)$ and $\partial\Sigma_1=\partial\Sigma$. We
prove $\Sigma_1\in\mathcal{F}_r(L,m)$ at first. By Lemma \ref{lm:sew}, there
exists a neighborhood $V$ of $(\partial\Delta)\setminus(\partial D)$ in $%
\overline{\Delta}\setminus((\partial D)\cap(\partial\Delta))$ such that $%
\varphi_1$ restricted to $V$ is identical. We get $\Sigma_1\in\mathcal{F}%
(L,m)$, and $\partial\Sigma_1=(f_1,\partial\Delta)=(f,\Delta)=\partial\Sigma$%
, since $(\partial D)\cap(\partial\Delta)\subset\{\mathfrak{p}_j\}_{j=1}^m$.

Assume $y\in\overline{\Delta}$ and $f_1(y)\notin E_q$. For each $%
x\in\varphi_1^{-1}(y)$, we have $f(x)=f_1(y)\notin E_q$, and $x$ is a
regular point of $f$. By Corollary \ref{cr:reghomeo}, if $x\in\Delta$, $f$
must be locally homeomorphic near $x$. So when $y\in\Delta$, $%
\varphi_1^{-1}(y)\subset\Delta$ and $y$ is also a regular point of $f_1$. On
the other hand, $(\partial D)\cap(\partial\Delta)\subset f^{-1}(E_q)$ and $%
\varphi_1$ restricted to $V$ is identical. $f_1$ has no branch points on $%
\partial\Delta$ outside $f^{-1}(E_q)$. Thus $\Sigma_1\in\mathcal{F}_r(L,m)$.

\begin{figure}[tbp]
\subcaptionbox{}{\begin{tikzpicture}[scale=2.5]
\fill[lightgray] (0,0) circle (1);
\draw[dashed] (0,0) circle (0.2);
\draw[dashed] (0,-0.6) circle (0.2);
\fill[white] (0.6,0)--(-0.6,0) arc[start angle=-180, end angle=0, radius=0.6];
\draw (0.6,0) node[right] {$x_0$} -- (-0.6,0) node[left] {$x_1$} arc[start angle=-180, end angle=0, radius=0.6];
\draw (0,0) circle (1);
\end{tikzpicture}}\qquad
\subcaptionbox{}{\begin{tikzpicture}[scale=2.5]
\fill[lightgray] (0,0) circle (1);
\draw[dashed] (0,0) circle (0.2);
\draw[double] (0.6,0) node[right] {$x_0$} -- (-0.6,0) node[left] {$x_1$};
\draw (0,0) circle (1);
\end{tikzpicture}}
\caption{Lemma \protect\ref{lm:sewdeg}}
\end{figure}

Next we just need to prove $H(\Sigma_1)=H(\Sigma)$, which is equivalent to
\begin{equation*}
(q-2)A(\Sigma_1)-4\pi\overline{n}(\Sigma_1)=(q-2)A(\Sigma)-4\pi\overline{n}%
(\Sigma).
\end{equation*}

By Sto\"ilow's theorem (Theorem \ref{thr:stoilow}), the surface $%
\Sigma_0=(f_0,\overline{\mathbb{C}})$ is equivalent to a surface given by a
meromorphic function. That is, $f_0$ is a rational function via a
homeomorphic transformation of coordinates, and Riemann-Hurwitz formula
applies to $f_0$. We denote $d=\deg_{\max}(f_0)=\deg_{\min}(f_0)$. The total
order of ramification of $f_0$ is equal to
\begin{equation*}
\sum_{x\in\overline{\mathbb{C}}}(v_{f_0}(x)-1)=2d-2.
\end{equation*}
Similarly to $f_1$, $f_0$ has also no branch value outside $E_q$. For each $%
a_j\in E_q$, consider the set $f_0^{-1}(a_j)$. We have $\#f_0^{-1}(a_j)=%
\overline{n}(\Sigma_0,a_j)$, and for each $j=1,2,\dots,q$,
\begin{equation*}
\sum_{x\in f_0^{-1}(a_j)}(v_{f_0}(x)-1)=\sum_{x\in f_0^{-1}(a_j)}v_{f_0}(x)-%
\overline{n}(\Sigma_0,a_j)=d-\overline{n}(\Sigma_0,a_j).
\end{equation*}
Summing this we get
\begin{equation*}
2d-2=\sum_{j=1}^q\sum_{x\in f_0^{-1}(a_j)}(v_{f_0}(x)-1)=\sum_{j=1}^q(d-%
\overline{n}(\Sigma_0,a_j))=qd-\overline{n}(\Sigma_0).
\end{equation*}
Thus we finally get $\overline{n}(\Sigma_0)=(q-2)d+2$.

It is clearly that $A(\Sigma)=A(\Sigma_0)+A(\Sigma_1)$, and $%
A(\Sigma_0)=4\pi d$ actually. Next, we will compute $\overline{n}(\Sigma)$
and $\overline{n}(\Sigma_0)+\overline{n}(\Sigma_1)$. For this purpose,
consider the contribution of each $x\in\overline{\Delta}$ with $f(x)\in E_q$.

\begin{enumerate}
\item If $x\in (\partial \Delta )\setminus (\tau ^{+}-\tau ^{-})$, it has no
contribution to $\overline{n}(\Sigma )$, $\overline{n}(\Sigma _{0})$ and $%
\overline{n}(\Sigma _{1})$.

\item If $x\in D$, then its contribution to $\overline{n}(\Sigma )$ is $1$, $%
\varphi _{0}(x)$ gives contribution $1$ to $\overline{n}(\Sigma _{0})$, but $%
\varphi _{1}(x)$ is undefined and has no contribution to $\overline{n}%
(\Sigma _{1})$.

\item If $x\in \Delta \setminus \overline{D}$, then its contribution to $%
\overline{n}(\Sigma )$ is $1$, $\varphi _{1}(x)$ gives contribution $1$ to $%
\overline{n}(\Sigma _{1})$, but $\varphi _{0}(x)$ is undefined and has no
contribution to $\overline{n}(\Sigma _{0})$.

\item If $x\in (\tau ^{+})^{\circ }$, then there exists exactly one point $%
x^{\ast }\in (\tau ^{-})^{\circ }$ as its corresponding point, say, $\varphi
_{0}(x)=\varphi _{0}(x^{\ast })$, $\varphi _{1}(x)=\varphi _{1}(x^{\ast })$,
and vice versa. For such a pair of points $x$, $x^{\ast }$, if none of them
lies on $\partial \Delta $, their contribution to $\overline{n}(\Sigma )$ is
$2$, and $\varphi _{0}(x)=\varphi _{0}(x^{\ast })$, $\varphi _{1}(x)=\varphi
_{1}(x^{\ast })$ give contribution $1$ to $\overline{n}(\Sigma _{0})$ and $%
\overline{n}(\Sigma _{1})$ respectively; if only one point of $x$ and $%
x^{\ast }$ lies on $\partial \Delta $, their contribution to $\overline{n}%
(\Sigma )$ is $1$, $\varphi _{0}(x)=\varphi _{0}(x^{\ast })$ gives
contribution $1$ to $\overline{n}(\Sigma _{0})$, and $\varphi _{1}(x)\in
\partial \Delta $ has no contribution to $\overline{n}(\Sigma _{1})$.

\item If $x$ is an endpoint of $\tau ^{+}$ and $x\notin \partial \Delta $,
its contribution to $\overline{n}(\Sigma )$ is $1$, and $\varphi _{0}(x)$, $%
\varphi _{1}(x)$ give contribution $1$ to $\overline{n}(\Sigma _{0})$ and $%
\overline{n}(\Sigma _{1})$ respectively. if If $x$ is an endpoint of $\tau
^{+}$ and $x\in \partial \Delta $, it has no contribution to $\overline{n}%
(\Sigma )$ and $\overline{n}(\Sigma _{1})$, and $\varphi _{1}(x)$ gives
contribution $1$ to $\overline{n}(\Sigma _{0})$.
\end{enumerate}

Thus, except in the case that $x$ is the endpoint of $\tau ^{+}$, $x$ (or
its corresponding pair) always gives same contribution to $\overline{n}%
(\Sigma )$ and $\overline{n}(\Sigma _{0})+\overline{n}(\Sigma _{1})$.
Therefore
\begin{equation*}
\overline{n}(\Sigma )=\overline{n}(\Sigma _{0})+\overline{n}(\Sigma _{1})-2.
\end{equation*}%
Because $A(\Sigma _{0})=4\pi d$ and $\overline{n}(\Sigma _{0})=(q-2)d+2$, we
finally get $H(\Sigma _{1})=H(\Sigma )$.
\end{proof}

Apparently, if we can always find such $\tau^+$ and $\tau^-$ satisfying
related conditions for $\Sigma\in\mathcal{F}_r(L,m)$ with sufficiently large
$\deg_{\min}(\Sigma)$, then Lemma \ref{lm:sewdeg} implies Theorem \ref%
{thr:main} inductively. At present, Lemma \ref{lm:GF} tells us the existence
of ``pullback'' $g$, and Lemma \ref{lm:GFbound} tells us that we can take a
Jordan domain based on $\partial g(S\setminus\beta)$. Next we just need to
find $g\in\mathcal{G}_{\infty}(f)$ such that the corresponding boundary
satisfies other conditions in Lemma \ref{lm:sewdeg}.

Our first consideration is the intersections of $\overline{g(S\setminus\beta)%
}$ and $\partial\Delta$.

\begin{definition}
Let $\Sigma=(f,\overline{\Delta})\in\mathcal{F}_r(L,m)$, $\mathcal{G}%
_{\infty}(f)$ be the set introduced in Lemma \ref{lm:GF}. For a natural
number $d$, denote $\mathcal{G}_d(f)$ the subset of $\mathcal{G}_{\infty}(f)$%
, which consists of all $g\in\mathcal{G}_{\infty}(f)$ such that
\begin{equation*}
\#(\overline{g(S\setminus\beta)}\cap\partial\Delta)\le d.
\end{equation*}
\end{definition}

\begin{lemma}
\label{lm:GFvertex} Let $\Sigma=(f,\overline{\Delta})\in\mathcal{F}_r(L,m)$
with $\mathcal{F}(L,m)$-partitions \eqref{def:FLM-1} and \eqref{def:FLM-2}.
We have $\mathcal{G}_{\infty}(f)=\mathcal{G}_{mq+m}(f)$. Moreover, for each $%
g\in\mathcal{G}_{\infty}(f)$, we have
\begin{equation*}
\overline{g(S\setminus\beta)}\cap\partial\Delta\subset f^{-1}(E_q)\cup\{%
\mathfrak{p}_j\}_{j=1}^m,
\end{equation*}
where $\mathfrak{p}_1$, $\mathfrak{p}_2$, $\dots$, $\mathfrak{p}_m$ are
appeared in \eqref{def:FLM-2}.
\end{lemma}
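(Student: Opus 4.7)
The plan is to establish the inclusion $\overline{g(S\setminus\beta)}\cap\partial\Delta\subset f^{-1}(E_q)\cup\{\mathfrak{p}_j\}_{j=1}^m$ first, from which the uniform bound $\#(\overline{g(S\setminus\beta)}\cap\partial\Delta)\le mq+m$ (and hence $\mathcal{G}_\infty(f)=\mathcal{G}_{mq+m}(f)$) follows by a quick counting: since each $\alpha_l$ is mapped homeomorphically onto the simple circular arc $c_l$, every $a_j\in E_q$ has at most one $f$-preimage on each $\alpha_l$, so $\#(f^{-1}(E_q)\cap\partial\Delta)\le mq$; combined with the $m$ vertices $\mathfrak{p}_j$, this yields the bound.

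For the inclusion I would argue by contradiction. Suppose $y\in\overline{g(S\setminus\beta)}\cap\partial\Delta$ satisfies $y\notin f^{-1}(E_q)\cup\{\mathfrak{p}_j\}$. Then $y\in\alpha_k^\circ$ for some $k$, and by Definition \ref{def:FLM}(2), $f$ is an OPH on some neighborhood $W$ of $y$ in $\overline{\Delta}$. Let $h=(f|_W)^{-1}\colon f(W)\to W$; this is a homeomorphism sending the ``flat'' boundary $c_k\cap f(W)$ onto the $\partial\Delta$-arc $\partial\Delta\cap W$. Choose $y_n=g(z_n)\to y$ with $z_n\in S\setminus\beta$; then $z_n=f(y_n)\to f(y)$, and for large $n$, $y_n\in W$, forcing $g(z_n)=h(z_n)$ by the uniqueness of the inverse of the homeomorphism $f|_W$. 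Since $g$ and $h$ are both continuous, the locus where they agree is clopen in $f(W)\setminus\beta$, so $g\equiv h$ on the connected component $C$ of $f(W)\setminus\beta$ containing the $z_n$'s.

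The contradiction then comes from showing that $\overline{h(C)}\cap\partial\Delta$ always contains an arc of $\partial\Delta$, hence is infinite. A short case analysis on the local position of $\beta$ relative to $c_k$ at $f(y)$ suffices: (i) if $f(y)\notin\beta$, then $C$ contains a neighborhood of $f(y)$ in $f(W)$ and $h(C)$ contains a neighborhood of $y$ in $\overline{\Delta}$; (ii) if $\beta$ and $c_k$ coincide on a neighborhood of $f(y)$, then $f(W)\setminus\beta$ equals the interior of $f(W)$ locally and $h(C)\supset W\cap\Delta$; (iii) if $\beta$ crosses $c_k$ transversally at $f(y)$, then $\beta\cap f(W)$ is a single slit emanating from $f(y)$, and $\overline{C}$ contains a sub-arc of $c_k\cap f(W)$ on at least one side of $f(y)$; (iv) in the tangent case, the convexity of $c_k$ (Remark \ref{rm:FLMconvex}) forces $\beta$'s great circle to coincide with the supporting great circle $\partial\overline{S'}$, and since $f(W)\subset\overline{S'}$ with $c_k$ tangent to $\partial\overline{S'}$ at $f(y)$, we get $\beta\cap f(W)=\{f(y)\}$ locally, so $h(C)\supset W\setminus\{y\}$. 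In every case, $\overline{h(C)}\cap\partial\Delta$ contains an open sub-arc of $\partial\Delta\cap W$, hence is infinite, contradicting $g\in\mathcal{G}_\infty(f)$.

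The main obstacle is the tangent case (iv): without the convexity hypothesis on $c_k$ one cannot \emph{a priori} rule out $\beta$ dipping into $f(W)$ and producing a component $C$ with $\overline{h(C)}\cap\partial\Delta=\{y\}$. Once the convexity pins this local geometry down, the remaining verifications reduce to the elementary topology of half-disks with slits.
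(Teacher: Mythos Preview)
Your proof is correct and follows essentially the same strategy as the paper's: argue by contradiction that a point of $\overline{g(S\setminus\beta)}\cap\partial\Delta$ lying on some $\alpha_k^\circ$ with $f$-image outside $E_q$ would force $\overline{g(S\setminus\beta)}\cap\partial\Delta$ to contain an arc of $\partial\Delta$, using the local homeomorphism on $\alpha_k^\circ$ together with the convexity of $c_k$, and then count. The only difference is in packaging. The paper first observes that such a point $x$ must lie on $\partial g(S\setminus\beta)$ (so $f(x)\in\beta$ by Lemma~\ref{lm:GFbound}), which eliminates your case~(i); it then handles the remaining situation in one stroke via the convexity fact in the footnote (an open line segment contained in a closed convex domain cannot meet the boundary in an isolated interior point), rather than splitting into your cases (ii)--(iv). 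Your case analysis is more explicit but recovers exactly the same geometric content; in particular your reduction of the tangent case~(iv) to $\beta\cap f(W)=\{f(y)\}$ is precisely where the convexity hypothesis does its work, matching the paper's use of it.
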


\begin{proof}
Let $g\in\mathcal{G}_{\infty}(f)$, and $x\in\overline{g(S\setminus\beta)}$
be a point in $\partial\Delta\setminus\{\mathfrak{p}_j\}_{j=1}^m$. Then $%
x\in\partial g(S\setminus\beta)$. Because $\overline{g(S\setminus\beta)}%
\cap\partial\Delta<\infty$, $x$ is an isolated point of $\partial
g(S\setminus\beta)\cap\partial\Delta$. Assume $x\in\alpha_j^{\circ}$ for
some $j=1,2,\dots,q$. By Definition \ref{def:FLM}, $c_j=(f,\alpha_j)$ is a
convex simple circular arc, and $f$ restricted to a neighborhood of $%
\alpha_j^{\circ}$ in $\overline{\Delta}$ is homeomorphic.

Assume $f(x)\notin E_{q}$. Then we can assume $f(x)\in l_{j}^{\circ }$ for
some $j=1,2,\dots ,q$, where $l_{j}=\beta (a_{j},a_{j+1})$. Thus there
exists a neighborhood $U$ of $x$ in $\overline{\Delta }$, where $U^{\circ }$
is a Jordan domain, such that the following hold.

(1) $f|_{\overline{U}}:\overline{U}\rightarrow f(\overline{U})$ is a
homeomorphism;

(2) \label{lm:GFvertex-ct2}$x$ is the unique point of $U\cap \overline{%
g(S\setminus \beta )}\cap \partial \Delta $;

(3) \label{lm:GFvertex-ct3}$f(\overline{U})$ is a convex closed domain;

(4) $c_{j}$ has an arc $c_{j}^{\prime }\subset \partial f(\overline{U})$,
such that $f(x)\in c_{j}^{\circ }$ and $(f|_{\overline{U}})^{-1}$ maps $%
c_{j}^{\prime }$ into $\alpha _{j}$ homeomorphically;

(5) $l_{j}$ has an arc $l_{j}^{\prime }\subset f(\overline{U})$, such that $%
f(x)\in l_{j}^{\circ }$ and $(f|_{\overline{U}})^{-1}$ maps $l_{j}^{\prime }$
into $\partial g(S\setminus \beta )$ homeomorphically.

But $c_{j}^{\prime }$ is a simple circular arc and $l_{j}^{\prime }$ is a
line segment, which implies a contradiction with (2) and (3)\footnote{%
If $D$ is a convex Jordan domain on $S$ and $l^{\circ }$ is an open line
segment in $\overline{D}$, and $l^{\circ }\cap \partial D\neq \varnothing $,
then each component of $l^{\circ }\cap \partial D$ is not a point.}. Thus $%
f(x)\in E_{q}$. We get $\overline{g(S\setminus \beta )}\cap \partial \Delta
\subset f^{-1}(E_{q})\cup \{\mathfrak{p}_{j}\}_{j=1}^{m}$.

On the other hand, $f(\partial \Delta )$ consists of $m$ circular arcs, each
point of $f(\partial \Delta )$ has at most $m$ inverses in $\partial \Delta $%
. So $\#(f^{-1}(E_{q})\cap \partial \Delta )\leq mq$ and $g\in \mathcal{G}%
_{mq+m}(f)$.
\end{proof}

\begin{lemma}
\label{lm:GF2} Let $\Sigma=(f,\overline{\Delta})\in\mathcal{F}_r(L,m)$. We
have
\begin{equation*}
\#(\mathcal{G}_{\infty}(f)\setminus\mathcal{G}_2(f))\le d_2,
\end{equation*}
where $d_2=mq+m-2$.
\end{lemma}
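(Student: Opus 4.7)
The plan is to reduce the inequality to a standard non-crossing polygon count. Set $P=(f^{-1}(E_q)\cap\partial\Delta)\cup\{\mathfrak{p}_j\}_{j=1}^{m}$; the counting used in Lemma \ref{lm:GFvertex} already gives $|P|\le mq+m$, and by that same lemma the boundary set $S_g:=\overline{g(S\setminus\beta)}\cap\partial\Delta$ of every $g\in\mathcal{G}_\infty(f)$ lies in $P$. For $g\in\mathcal{G}_\infty(f)\setminus\mathcal{G}_2(f)$ we moreover have $|S_g|\ge 3$, so it suffices to bound the number of simply connected regions $U_g:=g(S\setminus\beta)\subset\Delta$, pairwise disjoint by Lemma \ref{lm:GFin}, each of which meets $\partial\Delta$ in at least three of the at most $mq+m$ points of $P$.

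The key topological step is to verify that the vertex sets $S_g$ form a non-crossing family along $\partial\Delta$. Suppose, for contradiction, that two distinct $g_1,g_2$ produce four distinct points $a,b,c,d\in P$ appearing in the cyclic order $a,b,c,d$ on $\partial\Delta$ with $a,c\in S_{g_1}$ and $b,d\in S_{g_2}$. Because each $U_{g_i}$ is the homeomorphic image of the connected open set $S\setminus\beta$, one can construct a simple arc $\alpha_i\subset\overline{U_{g_i}}$ joining the two relevant points of $S_{g_i}$ whose interior lies in $U_{g_i}\subset\Delta$ (either using the lift structure $\partial U_{g_i}=\tau_i^+-\tau_i^-$ from Lemma \ref{lm:GFbound}, or directly by path-connectedness of $U_{g_i}$ followed by continuous extension to the endpoints). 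Lemma \ref{lm:GFin} then gives $\alpha_1\cap\alpha_2=\varnothing$, contradicting the standard planar fact that two simple arcs in $\overline{\Delta}$ joining interleaving pairs of boundary points must meet in the open disk.

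Having established the non-crossing property, the bound follows from a classical triangulation argument: viewing $P$ as the vertex set of a convex polygon inscribed along $\partial\Delta$ and each $S_g$ as the vertex set of an inscribed sub-polygon, one extends the union of all their sides to a full triangulation of the $|P|$-gon, which has exactly $|P|-2$ triangles. Each inscribed sub-polygon contributes at least $|S_g|-2\ge 1$ of these triangles, so
\[\#(\mathcal{G}_\infty(f)\setminus\mathcal{G}_2(f))\le |P|-2\le mq+m-2=d_2.\]
The principal obstacle is the rigorous construction of the arcs $\alpha_i$ in the preceding paragraph, especially in cases where $S_{g_1}$ and $S_{g_2}$ share vertices or where the boundary of some $U_{g_i}$ is intricate; handling this cleanly is the only non-routine ingredient, since the remaining combinatorics of inscribed non-crossing polygons is entirely standard.
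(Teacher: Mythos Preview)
Your proposal is correct and follows essentially the same approach as the paper: both arguments introduce the finite vertex set $\mathcal{V}=(f^{-1}(E_q)\cap\partial\Delta)\cup\{\mathfrak{p}_j\}$ on $\partial\Delta$, observe that each $g\in\mathcal{G}_\infty(f)\setminus\mathcal{G}_2(f)$ determines a subset $\mathcal{V}_g\subset\mathcal{V}$ with $\#\mathcal{V}_g\ge 3$, show these subsets form a non-crossing family, and conclude via the inscribed-polygon/triangulation count that their number is at most $\#\mathcal{V}-2\le mq+m-2$.

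The only notable difference is in how non-crossing is established. The paper argues directly that $g_2(S\setminus\beta)$, being connected and disjoint from $\overline{g_1(S\setminus\beta)}$, lies in a single component of $\Delta\setminus\overline{g_1(S\setminus\beta)}$, so $\mathcal{V}_{g_2}$ is confined to the closure of one arc of $\partial\Delta\setminus\mathcal{V}_{g_1}$; it then phrases the conclusion as disjointness of the Euclidean convex polygons $P_g$ spanned by the $\mathcal{V}_g$. This sidesteps precisely the ``principal obstacle'' you flag, namely the explicit construction of simple cross-arcs $\alpha_i$ inside $\overline{U_{g_i}}$. Your route via crossing arcs is also valid, but the paper's connectedness argument is cleaner and avoids any delicate boundary-accessibility issues. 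The final counting step is identical in spirit; you spell out the triangulation explicitly while the paper simply asserts the resulting inequality $\#\mathcal{V}\ge\#(\mathcal{G}_\infty(f)\setminus\mathcal{G}_2(f))+2$.
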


\begin{proof}
For each $g\in \mathcal{G}_{\infty }(f)\setminus \mathcal{G}_{2}(f)$, let $%
\mathcal{V}_{g}=\overline{g(S\setminus \beta )}\cap \partial \Delta \subset
\mathcal{V}$, and $P_{g}$ be the polygonal domain in the Euclidean space,
which is composed of points in $\mathcal{V}_{g}$ (noticing $\#\mathcal{V}%
_{g}\geq 3$).

By Lemma \ref{lm:GFin}, for distinct $g_1,g_2\in\mathcal{G}%
_{\infty}(f)\setminus\mathcal{G}_2(f)$, there is no common point of $%
g_1(S\setminus\beta)$ and $g_2(S\setminus\beta)$. $g_2(S\setminus\beta)$
must be contained in a connected component of $\Delta\setminus\overline{%
g_1(S\setminus\beta)}$, and $\mathcal{V}_{g_2}$ is contained in the closure
of a connected component of $(\partial\Delta)\setminus\mathcal{V}_{g_1}$.
Then $P_{g_1}\cap P_{g_2}=\varnothing$.

Let $\mathcal{V}=(f^{-1}(E_q)\cap\partial\Delta)\cup\{\mathfrak{p}%
_j\}_{j=1}^m$. By Lemma \ref{lm:GFvertex}, for all $g\in\mathcal{G}%
_{\infty}(f)\setminus\mathcal{G}_2(f)$, these $P_g$ are pairwise disjoint
polygonal domains with vertices in $\mathcal{V}$. We have
\begin{equation*}
\#\mathcal{V}\ge\#(\mathcal{G}_{\infty}(f)\setminus\mathcal{G}_2(f))+2.
\end{equation*}
By Lemma \ref{lm:GFvertex}, $\#\mathcal{V}\le mq+m$ and we complete the
proof.
\end{proof}

According to our discussion on branch points and path lifts, the lift near a
regular point must be unique, and we get:

\begin{lemma}
\label{lm:GF2nreg} Let $\Sigma=(f,\overline{\Delta})\in\mathcal{F}_r(L,m)$
with $\mathcal{F}(L,m)$-partitions \eqref{def:FLM-1} and \eqref{def:FLM-2}, $%
\mathcal{G}_2^{\prime }(f)$ be the subset of $\mathcal{G}_2(f)$ such that
for each $g\in\mathcal{G}_2^{\prime }(f)$,
\begin{equation*}
\partial\overline{g(S\setminus\beta)}\cap\partial\Delta\subset
f^{-1}(E_q)\cap\{\mathfrak{p}_j\}_{j=1}^m.
\end{equation*}
Then
\begin{equation*}
\#(\mathcal{G}_2(f)\setminus\mathcal{G}_2^{\prime }(f))\le d_2^{\prime },
\end{equation*}
where $d_2^{\prime }=mq+m$.
\end{lemma}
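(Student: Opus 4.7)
The plan is to mirror the polygon-disjointness counting of Lemma~\ref{lm:GF2}, but adapted to $\mathcal{G}_2$ where each associated ``polygon'' is degenerate---either a single point or a chord. For $g\in\mathcal{G}_2(f)\setminus\mathcal{G}_2'(f)$, membership in $\mathcal{G}_2$ forces $\mathcal{V}_g:=\overline{g(S\setminus\beta)}\cap\partial\Delta$ to have at most two elements, and by Lemma~\ref{lm:GFvertex} it is contained in $\mathcal{V}:=(f^{-1}(E_q)\cap\partial\Delta)\cup\{\mathfrak{p}_j\}_{j=1}^m$. Since $g\notin\mathcal{G}_2'(f)$, at least one point of $\mathcal{V}_g$ lies in the ``bad set''
\[B:=\mathcal{V}\setminus(f^{-1}(E_q)\cap\{\mathfrak{p}_j\}_{j=1}^m).\]
Because $f(\partial\Delta)$ is a union of $m$ simple circular arcs, each $a_i\in E_q$ has at most $m$ preimages on $\partial\Delta$, which gives $\#B\le\#\mathcal{V}\le mq+m=d_2'$. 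It therefore suffices to exhibit an injective assignment $g\mapsto x_g\in B$ from $\mathcal{G}_2(f)\setminus\mathcal{G}_2'(f)$ into $B$.

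The key local observation is that at every $x\in B$ the mapping $f$ is a local homeomorphism on a neighborhood $U$ of $x$ in $\overline{\Delta}$: in the case $x\in\alpha_j^\circ$ with $f(x)\in E_q$ this follows from condition~(2) in Definition~\ref{def:FLM}, while in the case $x=\mathfrak{p}_j$ with $f(x)\notin E_q$ it follows because $x$ cannot be a branch point (all branch values of an $\mathcal{F}_r(L,m)$-surface lie in $E_q$, so $v_f(\mathfrak{p}_j)=1$). Consequently, any branch $g\in\mathcal{G}_\infty(f)$ extending to $x$ must agree on each connected component of $f(U)\setminus\beta$ with the unique local inverse of $f|_U$, and by monodromy on the simply-connected domain $S\setminus\beta$ such a germ determines $g$ uniquely. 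Hence only finitely many branches approach a given bad $x$, controlled by the sector structure of $f(U)\setminus\beta$.

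To assemble the injection I follow the proof of Lemma~\ref{lm:GF2}: the pairwise disjoint Jordan domains $D_g:=g(S\setminus\beta)$ (Lemma~\ref{lm:GFin}) force the sets $\mathcal{V}_g$ to be non-crossing on $\partial\Delta$, meaning $\mathcal{V}_{g_2}$ lies in the closure of one component of $\partial\Delta\setminus\mathcal{V}_{g_1}$ whenever $g_1\ne g_2$. Using this non-crossing order together with the local uniqueness above, I would assign $x_g\in\mathcal{V}_g\cap B$ to each $g$, resolving any collisions by ordering the ``balloons'' $D_g$ attached at a common boundary point. The main obstacle is precisely this collision case: several pairwise disjoint $D_g$ may touch $\partial\Delta$ only at the same $x\in B$, so the injection is not automatic; it requires carefully combining the non-crossing structure on $\partial\Delta$ with the local-homeomorphism/monodromy analysis to show that at most one such $g$ genuinely needs $x$ as its bad label. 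Once this is done, the conclusion $\#(\mathcal{G}_2(f)\setminus\mathcal{G}_2'(f))\le\#B\le mq+m=d_2'$ follows immediately.
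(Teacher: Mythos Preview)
Your overall plan---assign to each $g\in\mathcal{G}_2(f)\setminus\mathcal{G}_2'(f)$ a bad point $x_g\in B=\mathcal{V}\setminus(f^{-1}(E_q)\cap\{\mathfrak{p}_j\})$ and bound by $\#\mathcal{V}\le mq+m$---is exactly the paper's. There is a small technical slip: from $v_f(\mathfrak{p}_j)=1$ at a boundary point you cannot conclude that $f$ is a local homeomorphism on a neighborhood in $\overline\Delta$, since the local model of Lemma~\ref{lm:stoilowloc} still permits $\omega=2$ (a boundary fold); Corollary~\ref{cr:reghomeo} gives that converse only at interior points. The usable hypothesis is simply that every $x\in B$ is a regular point of $f$.

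The real gap is your treatment of collisions. You leave this as a plan based on non-crossing order on $\partial\Delta$, but that combinatorics cannot separate several $g$'s all having $\mathcal{V}_g=\{x\}$, and in any case it is unnecessary. The paper settles the matter in one stroke: if distinct $g_1,g_2\in\mathcal{G}_2(f)$ share a point $x\in\partial\Delta$, then each contributes an arc of some $\tau_i^{\pm}$ (a lift of $\beta$, from Lemma~\ref{lm:GFbound}) through $x$ with interior in $\Delta$, and these two lifts are distinct---for if they coincided near $x$, one of the disjoint domains $g_i(S\setminus\beta)$ would have to occupy the side of that common lift which abuts an arc of $\partial\Delta$, contradicting the finiteness condition defining $\mathcal{G}_\infty(f)$. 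Two distinct lifts at $x$ force $v_f(x)\ge2$ by Remark~\ref{rm:liftcount}; hence $f(x)\in E_q$ (as $\Sigma\in\mathcal{F}_r$) and, by condition~(2) of Definition~\ref{def:FLM}, $x\in\{\mathfrak{p}_j\}$. Thus any shared boundary point lies outside $B$, every bad point is touched by at most one branch, and the assignment $g\mapsto x_g$ is injective with no further work.
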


\begin{proof}
Similar to Lemma \ref{lm:GF2}, let $\mathcal{V}=(f^{-1}(E_q)\cap\partial%
\Delta)\cup\{\mathfrak{p}_j\}_{j=1}^m$. If in $\mathcal{G}_2(f)$ there exist
distinct $g_1$ and $g_2$, such that
\begin{equation*}
\overline{g_1(S\setminus\beta)}\cap\overline{g_2(S\setminus\beta)}%
\cap\partial\Delta
\end{equation*}
owns a point $x\in\mathcal{V}$, then $\beta$ gives two different $f$-lifts
with the same initial point $x$, which makes $x$ a branch point and $f(x)\in
E_q$. By Corollary \ref{cr:reghomeo} and the definition of $\mathcal{F}(L,m)$%
, we know that $x\in\{\mathfrak{p}_j\}_{j=1}^m$.

For any point $x\in\mathcal{V}$, there is at most one $g\in\mathcal{G}%
_2(f)\setminus\mathcal{G}_2^{\prime }(f)$ such that $x\in\overline{%
g(S\setminus\beta)}$. Then $\#(\mathcal{G}_2(f)\setminus\mathcal{G}%
_2^{\prime }(f))\le\#\mathcal{V}$. We have known that $\#\mathcal{V}\le mq+m$
in Lemma \ref{lm:GFvertex}, which implies the result.
\end{proof}

So far, for a surface $\Sigma=(f,\overline{\Delta})\in\mathcal{F}_r(L,m)$
and $g\in\mathcal{G}_2^{\prime }(f)$, let $(\tau^{\ast})^+$ and $%
(\tau^{\ast})^-$ be two arcs of $\partial g(S\setminus\beta)$ described in
Lemma \ref{lm:GFbound}. By comparing with the conditions in Lemma \ref%
{lm:sew} and Lemma \ref{lm:sewdeg}, the only case in which we cannot use
Lemma \ref{lm:sew} is that $((\tau^{\ast})^+\cup(\tau^{\ast})^-)\cap\partial%
\Delta$ contains two points, and $f$ maps them to the same point.

\begin{definition}
For a surface $\Sigma=(f,\overline{\Delta})\in\mathcal{F}_r(L,m)$, let $%
\mathcal{G}_2^{\prime \prime }(f)$ be the subset of $\mathcal{G}_2^{\prime
}(f)$, which consists of all $g\in\mathcal{G}_2^{\prime }(f)$ such that
\begin{equation*}
\#(\overline{g(S\setminus\beta)}\cap\partial\Delta)\ne\#f(\overline{%
g(S\setminus\beta)}\cap\partial\Delta),
\end{equation*}
say, $\overline{g(S\setminus\beta)}\cap\partial\Delta$ has two distinct
points $x_1$ and $x_2$, and $f(x_1)=f(x_2)$.
\end{definition}

According to our previous discussion, when there exists a $g\in\mathcal{G}%
_2^{\prime }(f)\setminus\mathcal{G}_2^{\prime \prime }(f)$ for $\Sigma=(f,%
\overline{\Delta})\in\mathcal{F}_r(L,m)$, by Lemma \ref{lm:sewdeg} we can
find a surface $\Sigma_1=(f_1,\overline{\Delta})\in\mathcal{F}_r(L,m)$, such
that $\partial\Sigma_1=\partial\Sigma$, $H(\Sigma_1)=H(\Sigma)$, and $%
\deg_{\min}(f_1)<\deg_{\min}(f)$. However, this will fail when $g\in\mathcal{%
G}_2^{\prime \prime }(f)$. We need to find a workaround.

\begin{lemma}
\label{lm:GF2coin} Let $\Sigma=(f,\overline{\Delta})\in\mathcal{F}_r(L,m)$
with $\mathcal{F}(L,m)$-partitions \eqref{def:FLM-1} and \eqref{def:FLM-2},
and $d_2^{\prime \prime }=m(m-1)/2$. If
\begin{equation*}
\#\mathcal{G}_2^{\prime \prime }(f)>d_2^{\prime \prime },
\end{equation*}
then there exist two paths $\tau^+(\mathfrak{p}_{j_1},\mathfrak{p}_{j_2})$, $%
\tau^-(\mathfrak{p}_{j_1},\mathfrak{p}_{j_2})$, where $\mathfrak{p}_{j_1}$, $%
\mathfrak{p}_{j_2}$ are in the $\mathcal{F}(L,m)$-partition \eqref{def:FLM-1}
and $f(\mathfrak{p}_{j_1})=f(\mathfrak{p}_{j_1})\in E_q$, such that

(1) $\tau ^{+}-\tau ^{-}$ is a Jordan curve, and $(\tau ^{+}-\tau ^{-})\cap
\partial \Delta =\{\mathfrak{p}_{j_{1}},\mathfrak{p}_{j_{2}}\}$;

(2) $f(\tau ^{+})=f(\tau ^{-})=\beta ^{\ast }$, which is a Jordan path on $S$%
, and $f$ restricted to $(\tau ^{+})^{\circ }$ and $(\tau ^{-})^{\circ }$
are both homeomorphisms onto $(\beta ^{\ast })^{\circ }$.
\end{lemma}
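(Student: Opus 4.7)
The approach is pigeonhole followed by lifting a single Jordan loop through two different branches. For each $g\in\mathcal{G}_2''(f)\subset\mathcal{G}_2'(f)$, the two distinct points of $\overline{g(S\setminus\beta)}\cap\partial\Delta$ lie in $f^{-1}(E_q)\cap\{\mathfrak{p}_j\}_{j=1}^m$ and share a common $f$-image in $E_q$, so $g$ determines an unordered pair of vertices in $\{\mathfrak{p}_j\}_{j=1}^m$ mapping to the same point of $E_q$. The number of such pairs is at most $\binom{m}{2}=m(m-1)/2=d_2''$, so the hypothesis $\#\mathcal{G}_2''(f)>d_2''$ forces two distinct branches $g_1,g_2\in\mathcal{G}_2''(f)$ with $\overline{g_1(S\setminus\beta)}\cap\partial\Delta=\overline{g_2(S\setminus\beta)}\cap\partial\Delta=\{\mathfrak{p}_{j_1},\mathfrak{p}_{j_2}\}$ and $f(\mathfrak{p}_{j_1})=f(\mathfrak{p}_{j_2})=a_k$ for some $a_k\in E_q$.

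Next I will construct a Jordan loop $\beta^*$ based at $a_k$ on $S$, with $\beta^*\setminus\{a_k\}\subset S\setminus\beta$, approaching $a_k$ along the two distinct local components of a small punctured neighborhood of $a_k$ in $S\setminus\beta$. Since the two extensions $\{g_i^+(a_k),g_i^-(a_k)\}=\{\mathfrak{p}_{j_1},\mathfrak{p}_{j_2}\}$ are distinct for each $i$, the point $a_k$ is necessarily an interior point of the polygonal path $\beta$, so in a small spherical neighborhood of $a_k$ the set $S\setminus\beta$ breaks into two local sectors; because $S\setminus\beta$ is simply connected, one can take short arcs from $a_k$ into each sector and join their far endpoints by a simple arc in $S\setminus\beta$ to form the desired $\beta^*$. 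Each $g_i$ is univalent on $S\setminus\beta$ and extends continuously to either side of $\beta$, so $g_i\circ\beta^*$ is a simple arc in $\overline{\Delta}$ whose interior lies in $g_i(S\setminus\beta)\subset\Delta$ and whose endpoints form exactly $\{g_i^+(a_k),g_i^-(a_k)\}=\{\mathfrak{p}_{j_1},\mathfrak{p}_{j_2}\}$. Take $\tau^+$ to be $g_1\circ\beta^*$ and $\tau^-$ to be $g_2\circ\beta^*$, each reparametrized (reversed if necessary) to run from $\mathfrak{p}_{j_1}$ to $\mathfrak{p}_{j_2}$.

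All three required properties then follow. By Lemma \ref{lm:GFin}, the images $g_1(S\setminus\beta)$ and $g_2(S\setminus\beta)$ are disjoint, hence the interiors of $\tau^+$ and $\tau^-$ are disjoint subsets of $\Delta$ and the two paths meet only at the shared endpoints $\mathfrak{p}_{j_1},\mathfrak{p}_{j_2}$; this makes $\tau^+-\tau^-$ a Jordan curve whose intersection with $\partial\Delta$ is exactly $\{\mathfrak{p}_{j_1},\mathfrak{p}_{j_2}\}$. From $f\circ g_i=\mathrm{id}$ on $S\setminus\beta$ one gets $f(\tau^+)=f(\tau^-)=\beta^*$ as image sets, and because each $g_i$ is injective on $(\beta^*)^\circ$ the restriction $f|_{(\tau^\pm)^\circ}$ is a homeomorphism onto $(\beta^*)^\circ$. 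The main obstacle will be verifying the construction of $\beta^*$: one must ensure the two tangent approaches to $a_k$ genuinely land in distinct local components of $S\setminus\beta$ at $a_k$, so that the $g_i$-lifts really interpolate between $\mathfrak{p}_{j_1}$ and $\mathfrak{p}_{j_2}$ rather than forming loops; once this local-separation condition is arranged, the conclusion reduces to the injectivity of the two branches together with their disjointness from Lemma \ref{lm:GFin}.
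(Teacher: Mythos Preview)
Your proposal is correct and follows essentially the same approach as the paper: pigeonhole on unordered pairs of vertices $\{\mathfrak{p}_{j_1},\mathfrak{p}_{j_2}\}$ to find two branches $g_1,g_2$ sharing the same boundary pair, observe that the common image $a_k$ must lie in $\beta^\circ$, construct a simple loop $\beta^*$ through $a_k$ with interior in $S\setminus\beta$ that separates the two local pieces of $\beta$ at $a_k$, and lift $\beta^*$ through the two disjoint branches (invoking Lemma~\ref{lm:GFin}) to obtain $\tau^+,\tau^-$. Your explicit discussion of why the lift runs from $\mathfrak{p}_{j_1}$ to $\mathfrak{p}_{j_2}$---namely that the two boundary points are precisely $g_i^+(a_k)$ and $g_i^-(a_k)$, one on each side of the slit---is a detail the paper leaves implicit in the phrase ``$\beta^*$ separates $\beta$''.
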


\begin{proof}
For distinct $j_1$ and $j_2$, consider the pair of points $\{\mathfrak{p}%
_{j_1},\mathfrak{p}_{j_2}\}$, and let $G_{j_1,j_2}$ be the subset of $%
\mathcal{G}_2^{\prime \prime }(f)$, which consists of all $g\in\mathcal{G}%
_2^{\prime \prime }(f)$ such that
\begin{equation*}
\overline{g(S\setminus\beta)}\cap\partial\Delta=\{\mathfrak{p}_{j_1},%
\mathfrak{p}_{j_2}\}.
\end{equation*}
For $g\in G_{j_1,j_2}$, we have $f(\mathfrak{p}_{j_1})=f(\mathfrak{p}_{j_2})$%
. Then $\mathcal{G}_2^{\prime \prime }(f)=\cup_{j_1,j_2}G_{j_1,j_2}$.
Because $\#\mathcal{G}_2^{\prime \prime }(f)>d_2^{\prime \prime }=m(m-1)/2$,
according to the pigeonhole principle, there is at least one set $%
G_{j_1,j_2} $ containing two distinct elements $g^+$ and $g^-$.

For this set $G_{j_1,j_2}$, assume that $f(\mathfrak{p}_{j_1})=f(\mathfrak{p}%
_{j_2})=a_{j_0}\in E_q$. Because $G_{j_1,j_2}\subset\mathcal{G}_2^{\prime
\prime }(f)$, we know that $a_{j_0}\in\beta^{\circ}$ and thus $1<j_0<q$. Now
choose a polygonal Jordan curve $\beta^{\ast}$ on $S$ with an endpoint $%
a_{j_0}$, such that $a_{j_0}$ is the only intersection point of $%
\beta^{\ast} $ and $\beta$, and $\beta^{\ast}$ separates $\beta$.

Now $g^+$ gives an $f$-lift $\tau^+$ of $\beta^{\ast}$, such that $%
(\tau^+)^{\circ}=g^+((\beta^{\ast})^{\circ})$, and its endpoints are $%
\mathfrak{p}_{j_1}$ and $\mathfrak{p}_{j_2}$. By adjusting the direction, we
can assume that $\tau^+$ has the initial point $\mathfrak{p}_{j_1}$ and the
terminal point $\mathfrak{p}_{j_2}$. Obviously $f(\tau^+)=\beta^{\ast}$ and $%
f$ restricted to $(\tau^+)^{\circ}$ is a homeomorphism onto $%
(\beta^{\ast})^{\circ}$, and $(\tau^+)^{\circ}\subset g^+(S\setminus\beta)$.

Similarly, $g^-$ gives another $f$-lift $\tau^-(\mathfrak{p}_{j_1},\mathfrak{%
p}_{j_2})$ of $\beta^{\ast}$. Because $g^+(S\setminus\beta)$ and $%
g^-(S\setminus\beta)$ have no common point, $\tau^+-\tau^-$ is a Jordan
curve.
\end{proof}

Thus, for this case we can also use Lemma \ref{lm:sewdeg}. Summarize our
work in this section and we will prove that:

\begin{lemma}
\label{lm:main} Let $\Sigma=(f,\overline{\Delta})\in\mathcal{F}_r(L,m)$ be a
surface. If $\deg_{\min}(f)>d^{\ast}$, then there exists a surface $%
\Sigma_1=(f_1,\overline{\Delta})\in\mathcal{F}_r(L,m)$, such that
\begin{align*}
\partial\Sigma_1&=\partial\Sigma, \\
H(\Sigma_1)&=H(\Sigma),
\end{align*}
and moreover,
\begin{equation*}
\deg_{\min}(f_1)<\deg_{\min}(f),
\end{equation*}
where $d^{\ast}$ is a constant depending only on $m$ and $q$.
\end{lemma}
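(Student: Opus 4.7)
The plan is to chain the cardinality estimates of Lemmas \ref{lm:GF}, \ref{lm:GF2}, and \ref{lm:GF2nreg} against the dichotomy encoded by $\mathcal{G}_2''(f)$, and then invoke Lemma \ref{lm:sewdeg}. Define
\[
d^{\ast} = d_{\infty} + d_2 + d_2' + d_2'',
\]
which depends only on $m$ and $q$. Under the hypothesis $\deg_{\min}(f) > d^{\ast}$, successively subtracting the three bounds yields
\[
\#\mathcal{G}_2'(f) \ge \#\mathcal{G}_2(f) - d_2' \ge \#\mathcal{G}_{\infty}(f) - d_2 - d_2' \ge \deg_{\min}(f) - d_{\infty} - d_2 - d_2' > d_2''.
\]
I would then distinguish two cases according to whether $\mathcal{G}_2'(f) \setminus \mathcal{G}_2''(f)$ is nonempty.

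In the first case, pick $g \in \mathcal{G}_2'(f) \setminus \mathcal{G}_2''(f)$ and apply Lemma \ref{lm:GFbound} to obtain arcs $(\tau^{\ast})^+$ and $(\tau^{\ast})^-$ of $\partial g(S\setminus\beta)$ whose difference is a Jordan curve bounding a Jordan domain $D \subset \Delta$ containing $g(S\setminus\beta)$. The membership $g \in \mathcal{G}_2'(f)$ immediately forces $(\partial D) \cap (\partial \Delta) \subset f^{-1}(E_q) \cap \{\mathfrak{p}_j\}_{j=1}^m$ and yields at most two such points. If fewer than two, the first hypothesis of Lemma \ref{lm:sew} is satisfied. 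If exactly two distinct points $x_1, x_2$, the assumption $g \notin \mathcal{G}_2''(f)$ gives $f(x_1) \ne f(x_2)$; since $f\circ\tau^+ = f\circ\tau^- \circ \varphi$, a simultaneous interior membership $\tau^+(t), \tau^-(\varphi(t)) \in \partial\Delta$ would force the two parameter values to land at points of $\partial\Delta$ with equal $f$-image, and this is possible only if $\tau^+(t) = \tau^-(\varphi(t))$, contradicting the Jordan property of $\tau^+ - \tau^-$ at an interior parameter. Hence the second hypothesis of Lemma \ref{lm:sew} holds, the boundary-condition hypothesis of Lemma \ref{lm:sewdeg} is inherited from $\mathcal{G}_2'(f)$, and Lemma \ref{lm:sewdeg} delivers the desired $\Sigma_1$.

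In the second case, $\mathcal{G}_2'(f) \subseteq \mathcal{G}_2''(f)$, so $\#\mathcal{G}_2''(f) > d_2''$. Lemma \ref{lm:GF2coin} then produces paths $\tau^+(\mathfrak{p}_{j_1}, \mathfrak{p}_{j_2})$ and $\tau^-(\mathfrak{p}_{j_1}, \mathfrak{p}_{j_2})$, with $f(\mathfrak{p}_{j_1}) = f(\mathfrak{p}_{j_2}) \in E_q$, whose difference is a Jordan curve meeting $\partial\Delta$ precisely at its two endpoints. The interiors of $\tau^{\pm}$ lie in $\Delta$, making the parametric-simultaneity hypothesis of Lemma \ref{lm:sew} automatic, while $(\tau^+ - \tau^-) \cap \partial\Delta = \{\mathfrak{p}_{j_1}, \mathfrak{p}_{j_2}\} \subset f^{-1}(E_q) \cap \{\mathfrak{p}_j\}_{j=1}^m$ secures the extra hypothesis of Lemma \ref{lm:sewdeg}; applying that lemma produces $\Sigma_1$ again.

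The main technical point I expect is the rigorous verification of the parametric-simultaneity condition of Lemma \ref{lm:sew} in the first case, where one must exclude interior coincidences $\tau^+(t) = \tau^-(\varphi(t)) \in \partial\Delta$ against the Jordan property of the enclosing curve. Once this is pinned down, the proof reduces to the bookkeeping of constants that justifies the chosen $d^{\ast}$ together with a straightforward appeal to Lemma \ref{lm:sewdeg}.
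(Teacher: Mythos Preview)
Your proposal is correct and follows essentially the same approach as the paper: the same constant $d^{\ast}=d_{\infty}+d_2+d_2'+d_2''$, the same chain of inequalities through Lemmas \ref{lm:GF}, \ref{lm:GF2}, \ref{lm:GF2nreg} to get $\#\mathcal{G}_2'(f)>d_2''$, and the same dichotomy on whether $\mathcal{G}_2'(f)\setminus\mathcal{G}_2''(f)$ is empty, resolved via Lemma \ref{lm:GFbound} or Lemma \ref{lm:GF2coin} respectively before invoking Lemma \ref{lm:sewdeg}. Your explicit verification of the parametric-simultaneity hypothesis of Lemma \ref{lm:sew} in the first case is exactly the content of the remark the paper makes just before defining $\mathcal{G}_2''(f)$, so there is no genuine divergence.
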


\begin{proof}
We just need to put $d^{\ast }=d_{\infty }+d_{2}+d_{2}^{\prime
}+d_{2}^{\prime \prime }$, where $d_{\infty }$, $d_{2}$, $d_{2}^{\prime }$, $%
d_{2}^{\prime \prime }$ are constants mentioned in Lemma \ref{lm:GF}, Lemma %
\ref{lm:GF2}, Lemma \ref{lm:GF2nreg} and Lemma \ref{lm:GF2coin}. Assume $%
\deg _{\min }(f)>d^{\ast }$. Then by Lemma \ref{lm:GF}, we have
\begin{equation*}
\#\mathcal{G}_{\infty }(f)\geq d^{\ast }-d_{\infty }=d_{2}+d_{2}^{\prime
}+d_{2}^{\prime \prime }.
\end{equation*}%
By Lemma \ref{lm:GF2}, we have
\begin{equation*}
\#(\mathcal{G}_{\infty }(f)\setminus \mathcal{G}_{2}(f))\leq d_{2}\mathrm{\
and\ }\#\mathcal{G}_{2}(f)>d_{2}^{\prime }+d_{2}^{\prime \prime }.
\end{equation*}%
By Lemma \ref{lm:GF2nreg}, we have
\begin{equation*}
\#(\mathcal{G}_{2}(f)\setminus \mathcal{G}_{2}^{\prime }(f))\leq
d_{2}^{\prime }\mathrm{\ and\ }\#\mathcal{G}_{2}^{\prime }(f)>d_{2}^{\prime
\prime }.
\end{equation*}

Assume $\mathcal{G}_2^{\prime }(f)\ne\mathcal{G}_2^{\prime \prime }(f)$.
Then there exists a $g\in\mathcal{G}_2^{\prime }(f)\setminus\mathcal{G}%
_2^{\prime \prime }(f)$. Let $(\tau^{\ast})^+$, $(\tau^{\ast})^-$ be two
curves obtained in Lemma \ref{lm:GFbound}. They satisfy the conditions in
Lemma \ref{lm:sewdeg}, and the surface $\Sigma_1$ can be constructed by
Lemma \ref{lm:sewdeg}. Conversely, if $\mathcal{G}_2^{\prime }(f)=\mathcal{G}%
_2^{\prime \prime }(f)$, then $\#\mathcal{G}_2^{\prime \prime
}(f)>d_2^{\prime \prime }$. And by Lemma \ref{lm:GF2coin}, we also have two
curves $\tau^+$, $\tau^-$, which satisfy the conditions in Lemma \ref%
{lm:sewdeg}. We can construct $\Sigma_1$ as well. So in either case, the
required surface $\Sigma_1$ must exist.
\end{proof}

Finally, with Lemma \ref{lm:arg}, it is clear that Theorem \ref{thr:main} is
a corollary of Lemma \ref{lm:main} inductively.

\end{document}